\newtheorem{theorem}{Theorem}[section]
\newtheorem{proposition}[theorem]{Proposition}
\newtheorem{lemma}[theorem]{Lemma}
\theoremstyle{definition}
\newtheorem{remark}[theorem]{Remark}
\def\multiset#1#2{\ensuremath{\left(\kern-.3em\left(\genfrac{}{}{0pt}{}{#1}{#2}\right)\kern-.3em\right)}}
\def\supermultiset#1#2{\ensuremath{\:\left(\kern-.5em\left(\genfrac{}{}{0pt}{}{#1}{#2}\right)\kern-.5em\right)\:}}
\newcommand*{\lcdot}{\raisebox{-0.25ex}{\scalebox{4}{$\cdot$}}}
\DeclareRobustCommand{\stirling}{\genfrac\{\}{0pt}{}}
\DeclareMathOperator\lcm{lcm}
\DeclareMathOperator\im{im}
\begin{document}
\title{The cohomology of the free loop spaces of $SU(n+1)/T^n$}

\author{Matthew I. Burfitt}
\address{\scriptsize{Institute of Mathematics, University of Aberdeen, Aberdeen AB24 3UE, United Kingdom}}  
\email{matthew.burfitt@abdn.ac.uk}

\author{Jelena Grbi\'c} 
\address{\scriptsize{School of Mathematics, University of Southampton,Southampton SO17 1BJ, United Kingdom}}  
\email{J.Grbic@soton.ac.uk} 

\subjclass[2010]{}
\keywords{} 
\thanks{Research supported in part by The Leverhulme Trust Research Project Grant  RPG-2012-560.}

\maketitle

\begin{abstract}
    We study the cohomology of the free loop space of $SU(n+1)/T^n$, the simplest example of a complete flag manifolds and an important homogeneous space. Through this enhanced analysis we reveal rich new combinatorial structures arising in the cohomology algebra of the free loop spaces. We build new theory to allow for the computation of $H^*(\Lambda(SU(n+1)/T^{n});\mathbb{Z})$, a significantly more complicated structure than other known examples. In addition to our theoretical results, we explicitly implement a novel integral Gr\"obner basis procedure for computation. This procedure is applicable to any Leray-Serre spectral sequence for which the cohomology of the base space is the quotient of a finitely generated polynomial algebra. The power of this procedure is illustrated by the explicit calculation of $H^*( \Lambda(SU(4)/T^3);\mathbb{Z})$. We also provide a python library with examples of all procedures used in the paper.
\end{abstract}

\section{Introduction}

    The free loop space $\Lambda X$ of a topological space $X$ is defined to be the mapping space $Map(S^1,X)$, the space of all unpointed maps from the circle to $X$.
	This differs from the based loops space $\Omega X=Map_*(S^1,X)$, the space of all pointed maps from the circle to $X$.
	The two loop spaces are connected by the evaluation fibration.
	The based loop space functor is an important classical object in algebraic topology and has been well studied.
	However, the topology of free loop spaces, while required for many applications,
	behaves in a much more complex way
	and is still only well understood in a handful of examples.
    A primary motivation for studying the topology of the free loop space is the important role loops on a manifold play in both mathematics and physics.
    Given a Riemannian manifold $(M,g)$, the closed geodesics parametrised by $S^1$ are the critical points of the energy functional
    \begin{equation*}\label{eq:Energy}
        E\colon \Lambda M\to \mathbb R, \quad E(\gamma):= \frac{1}{2}\int_{S^1} ||\dot{\gamma}(t)||^2 dt.
    \end{equation*}
    Morse theory applied to the energy functional $E$ gives a description of the loop space $\Lambda M$ by successive attachments of bundles over the critical submanifolds.
    Knowledge of the topology of $\Lambda M$ therefore implies existence results for critical points of $E$.
    Computations of the cohomology of the free loop space have therefore received much attention over the last several decades.
    In the simplest case when $X$ is an $H$-space,
    there is a homotopy equivalence
    \[
        \Lambda X \simeq \Omega X \times X.
    \]
    Progress past this is restricted to specific examples by applying specialised methods relevant to their particular case. 
    Broadly speaking there are three main related approaches to studying the cohomology of the free loop space:
    the Hochschild cohomology of the normalized singular chains on the base loop space \cite{Goodwillie85, Burghelea86, Dupont03, Ndombo2002, Idrissi00, Menichi00},
    the Eilenberg-Moore spectral sequence of the fibre square realising $\Lambda X$ as the pullback back of a pair of diagonal maps
    \cite{Smith81, Kuribayashi91, Kuribayashi99, Kuribayashi04}
    and the cohomology Leray-Serre spectral sequence of the path-loop (or Wegraum) fibration \cite{McCleary1987, cohololgy_Lprojective, Burfitt2018}.
    Computational examples in the literature include
    the complex projective space with integral coefficients \cite{Crabb88},
    Grassman and Stiefel manifolds with coefficients in a finite field in many cases \cite{Kuribayashi91},
    wedge products of same dimensional spheres with integral coefficients \cite{Parhizgar97},
    the classifying space of a compact simply connected Lie groups with coefficients in a finite field \cite{Kuribayashi99}
    and
    simply connected $4$-manifolds with rational coefficients \cite{Onishchenko12}.
 
    The importance of the topology of the free loop space of closed oriented manifolds is further highlighted by the seminal work of Chas and Sullivan \cite{StringTopology} where two new algebraic operations, the loop product and Batalin–Vilkovisky operator where introduced, collectively referred to as string topology operations.
    In studying string topology operations the homology of the free loop space has also been considered in a several additional cases.
    Integral homology of the free loop space of complex Stiefel manifold \cite{Tamanoi07},
    spaces whose cohomology is an exterior algebra with field coefficients \cite{Bohmann2021},
    $(n-1)$-connected manifolds up to dimension $3n-2$ with homology coefficients over a field \cite{Berglund15}
    and many cases of $(n-1)$-connected $2n$-manifolds with integral coefficients \cite{Beben17}.
    Cohen-Jones-Yan \cite{Cohen2004} have also shown that there is a spectral sequence of algebras converging to the homology of the free loop space and the loop product demonstrating its use on spheres and complex proactive spaces.
    
    A straight forward Leray-Serre spectral sequence approach to computing the cohomology of the free loop space was presented by Seeliger \cite{cohololgy_Lprojective} and demonstrated on the free loop space of complex projective spaces.
    Greatly extending these ideas the authors \cite{Burfitt2018} previously obtained the integral cohomology of the free loop space of the complete flag manifolds of rank $2$ simple Lie groups.
    The other work closely related to this paper is that of McCleary and Ziller~\cite{McCleary1991, McCleary1987}, where it is shown using the Leray-Serre spectral sequence on the path-loop fibration and a classical theorems Gromoll-Meyer~\cite{Gromoll69}, that all homogeneous spaces with the exception of those of rank $1$ have infinitely many geometrically distinct closed geodesics.
    This extends earlier work of Ziller~\cite{Ziller1977}, in which Morse theory is applied to obtain the $\mathbb{Z}_2$ Betti number of the free loop space of globally symmetric spaces.

	In this paper we explore the cohomology of the free loop space of homogeneous spaces by studying $H^*(\Lambda(SU(n+1)/T^n);\mathbb{Z})$ for $n \geq 2$.
	In doing so we uncover surprising combinatorial structure, make use of computational commutative algebra and develop computer aided algorithms.
	The power of the theory is illustrated by obtaining the integral cohomology of $\Lambda(SU(4)/T^3)$, a significantly more complex example than obtaining $H^*(\Lambda S^3;\mathbb{Z})$ in the case when $n=2$ or $H^*(\Lambda(SU(3)/T^2);\mathbb{Z})$ in the case the $n=3$ previously considered in \cite{Burfitt2018}.
    
    We apply classical homotopy theoretic arguments to the path-loop
    fibration and its pull back along the diagonal map on $SU(n+1)/T^n$ to derive the differentials in the Serre spectral sequence of the free loop space evaluation fibration converging to $H^*(\Lambda(SU(n+1)/T^n);\mathbb{Z})$.
    Classically, the elementary symmetric polynomials are used for the basis of symmetric functions in order to write down generators of the quotient ideal in $H^*(SU(n+1)/T^n;\mathbb{Z})$.
    However, that choice of the basis elements does not lead to a description of the differentials that can be easily applied to developing further theory.
    In this work we choose the basis consisting of complete homogeneous symmetric polynomials.
    By doing so we acquire a new unexpectedly sophisticated combinatorial structure on the differentials.

    The consequences of the choice of basis is first highlighted by Theorem~\ref{thm:monomial sum} where the ideal generated by complete homogeneous symmetric generator is shown to straightforwardly rearrange to a reduced Gr\"obner basis.
    This demonstrates our new approach of using Gr\"obner basis for understating cohomology algebras expressed as a polynomial quotients by analysing them from the perspective of computational commutative algebra and computer aided algorithms. 
    We explicitly apply Gr\"obner basis to spectral sequences in Proposition~\ref{thm:SpectralGrobner} and lay out an integral Gr\"obner basis procedure for performing computations applicable to any Leray-Serre spectral sequence for which the cohomology of the base space is the quotient of a finitely generated polynomial algebra.
    The enhancements to the classical Buchberger algorithm in Section~\ref{sec:SpectralGrobner} combine to provided a powerful procedure that makes the later application in Section~\ref{sec:LSU4/T3} computationally possible.

    It is the characterisation in Proposition~\ref{thm:SpectralGrobner} that motivates Theorem~\ref{thm:Ideals}, which provides part of the computation of $H^*(\Lambda(SU(n+1)/T^n);\mathbb{Z})$ for arbitrary $n$.
    Theorem~\ref{thm:Ideals} is used along side the direct application of Proposition~\ref{thm:SpectralGrobner} in Section~\ref{sec:LSU4/T3} to obtain an expression for the module structure of $H^*(\Lambda(SU(4)/T^3);\mathbb{Z})$ up to a small uncertainty in torsion type.

\section{Background}\label{sec:Background}
    
    \subsection{Symmetric polynomials}\label{sec:SymPoly}

    	A polynomial in $\mathbb{Z}[\gamma_1,\dots,\gamma_n]$ is called symmetric if it is invariant under permutations of the indices of variables $\gamma_1,\dots,\gamma_n$.
    	The study of symmetric polynomials goes back more than three hundred years, originally used in the study of roots of single variable polynomials.
    	Today symmetric polynomials have applications in a diverse range of areas of mathematics.
    	In the paper the relevance of the symmetric polynomials is brought by their presence in the cohomology rings of complete flag manifolds,
    	in Section~\ref{sec:CohomCompFlag}.
    	In this section we summaries some basic concepts from the theory of symmetric polynomials that will be essential for our later work.
    	A compete introduction to the topic can be found in \cite[\S $7$]{ECstanly} or \cite[\S $I$]{Macdonald}.
	
    	    \subsubsection{Elementary symmetric polynomials}\label{sec:elementary}
    	
    		Much of the language used to described symmetric polynomials is the language of partitions.
    			An $n$ \emph{partition} $\lambda$ is a sequence of non-negative integers $(\lambda_1,\dots,\lambda_k)$, for some integer $k\geq 1$, such that
    			\begin{equation*}
    				\lambda_1\geq\cdots\geq\lambda_k \;\; \text{and} \;\; \lambda_1+\cdots+\lambda_k=n.
    			\end{equation*}
    			By convention we consider partition $(\lambda_1,\dots,\lambda_k)$ and $(\lambda_1,\dots,\lambda_k,0,\dots,0)$ to be equal
    			and abbreviate an $n$ partition $\lambda$ by $\lambda \vdash n$.
    		The elementary symmetric polynomials are a special collection of symmetric polynomials that form a basis of the symmetric polynomials, which we make explicit in Theorem~\ref{thm:FunThmSym}.
    			For each integer $n\geq 1$ and $1\leq l \leq n$, the \emph{elementary symmetric polynomial} $\sigma_l\in\mathbb{Z}[\gamma_1,\dots,\gamma_n]$ in $n$ variables is given by
    			\begin{equation*}
    				\sigma_l=\sum_{1\leq i_1<\cdots<i_l\leq n}{\gamma_{i_1}\cdots\gamma_{i_l}}.
    			\end{equation*}
    		For a partition $\lambda=(\lambda_1,\dots,\lambda_k)$, denote by $\sigma_\lambda$ the symmetric polynomial $\sigma_{\lambda_1}\cdots\sigma_{\lambda_k}$.
    		The following theorem is sometimes known as the fundamental theorem of symmetric polynomials.
    		
    		\begin{theorem}[{\cite[\S $7.4$]{ECstanly}}]\label{thm:FunThmSym}
    			For each $n\geq 1$, the set of $\sigma_\lambda$, where $\lambda$ ranges over all $n$ partitions, forms an additive basis of all symmetric polynomials.
    			That is, for $1\leq i \leq n$, the set of $\sigma_i$ are independent and generate 
    		    the symmetric polynomials as an algebra.
    		\end{theorem}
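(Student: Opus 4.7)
The plan is to proceed by the classical lexicographic-leading-term argument, which handles both the spanning and the algebraic independence claims in parallel. Order monomials of $\mathbb{Z}[\gamma_1,\dots,\gamma_n]$ lexicographically: declare $\gamma_1^{a_1}\cdots\gamma_n^{a_n} > \gamma_1^{b_1}\cdots\gamma_n^{b_n}$ when the first nonzero entry of $(a_1-b_1,\dots,a_n-b_n)$ is positive. The first step is a symmetry observation: if $f$ is symmetric and its leading monomial is $\gamma_1^{a_1}\cdots\gamma_n^{a_n}$, then necessarily $a_1\geq a_2\geq\cdots\geq a_n$, since otherwise transposing two adjacent indices would produce a lexicographically larger monomial which, by symmetry, also occurs in $f$.

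Next I would record the leading monomial of a product of elementary symmetric polynomials. Directly from the definition, $\sigma_l$ has leading monomial $\gamma_1\gamma_2\cdots\gamma_l$, and so
\begin{equation*}
\sigma_1^{b_1}\sigma_2^{b_2}\cdots\sigma_n^{b_n} \;\text{has leading monomial}\; \gamma_1^{b_1+b_2+\cdots+b_n}\gamma_2^{b_2+\cdots+b_n}\cdots\gamma_n^{b_n}.
\end{equation*}
The key point is that the map $(b_1,\dots,b_n)\mapsto(b_1+\cdots+b_n,\, b_2+\cdots+b_n,\,\dots,\, b_n)$ is a bijection between tuples $(b_1,\dots,b_n)\in\mathbb{Z}_{\geq 0}^n$ and weakly decreasing tuples $(a_1,\dots,a_n)\in\mathbb{Z}_{\geq 0}^n$, with inverse $b_j=a_j-a_{j+1}$ (setting $a_{n+1}=0$). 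Under this bijection, the monomials $\sigma_1^{b_1}\cdots\sigma_n^{b_n}$ are in bijection with partitions having parts of size at most $n$, and hence with the products $\sigma_\lambda$.

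For spanning, given a symmetric $f$ with leading term $c\gamma_1^{a_1}\cdots\gamma_n^{a_n}$, the first step shows $a_1\geq\cdots\geq a_n$, so the bijection above provides a unique $\sigma_\lambda$ with the same leading monomial; subtract $c\sigma_\lambda$ to strictly decrease the leading monomial and induct (the process terminates since the lex order is a well-order on monomials of bounded total degree). For algebraic independence, suppose $\sum_B c_B\sigma_1^{b_1}\cdots\sigma_n^{b_n}=0$ with not all $c_B$ zero; since distinct tuples $B$ produce distinct leading monomials by the bijection, the leading term of the lexicographically largest surviving summand cannot cancel, a contradiction.

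I do not expect a serious obstacle: the entire argument is the standard leading-term induction, and the only place that needs care is verifying the bijection between exponent tuples $(b_1,\dots,b_n)$ and weakly decreasing tuples $(a_1,\dots,a_n)$, which is what makes ``leading monomial'' an injective invariant on monomials in the $\sigma_i$ and thereby forces both halves of the theorem simultaneously.
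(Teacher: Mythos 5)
Your argument is correct and complete: the lexicographic leading-term induction, together with the bijection $(b_1,\dots,b_n)\mapsto(b_1+\cdots+b_n,\dots,b_n)$ onto weakly decreasing exponent tuples, is the classical proof of the fundamental theorem of symmetric polynomials. The paper itself gives no proof (it cites Stanley, \S 7.4, and the argument there is essentially the one you give), so there is nothing to compare against beyond noting that your proof matches the standard reference.
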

            \hspace*{\fill} $\square$
 	
	    \subsubsection{Complete homogeneous symmetric polynomials}\label{sec:homogeneous}
	
    		The complete homogeneous symmetric polynomials are another collection of $n$ symmetric polynomials in $n$ variables for each $n\geq 1$.
    		In a sense, which is made explicit in \cite[\S $7.6$]{ECstanly},
    		the complete homogeneous symmetric polynomials can be thought of as dual to the elementary symmetric polynomials.
    			For each integer $n\geq 1$ and $1\leq l \leq n$, define the \emph{complete homogeneous symmetric polynomials} $h_l\in\mathbb{Z}[\gamma_1,\dots,\gamma_n]$ in $n$ variables by
    			\begin{equation}\label{defn:CompleteHomogeneous}
    				h_l=\sum_{1\leq i_1\leq\cdots\leq i_l\leq n}{\gamma_{i_1}\cdots\gamma_{i_l}}.
    			\end{equation}
    			For a partition $\lambda=(\lambda_1,\dots,\lambda_k)$, denote by $h_\lambda$ the symmetric polynomial $h_{\lambda_1}\cdots h_{\lambda_k}$.
    		Setting $\sigma_0=h_0=1$, the following identity derived for infinite variables in \cite[\S $I$,2]{Macdonald}, gives the relationship between the elementary symmetric and complete homogeneous symmetric polynomials.
    		Evaluating all but $n$ variables to $0$, for any $m \leq n$ gives
    		\begin{equation}\label{eq:ElementaryHomogeneousRelation}
    		    \sum_{t=0}^{m}(-1)^t \sigma_t h_{n-t} = 0.
    		\end{equation}
    		As $\sigma_0=h_0=1$, equation~(\ref{eq:ElementaryHomogeneousRelation}) can be used to inductively derive expressions for either elementally symmetric polynomials in terms of complete homogeneous polynomials or the other way round.
    		Moreover the complete homogeneous symmetric polynomials
    		also form a basis of the symmetric polynomials.
    
    \subsection{Gr\"{o}bner bases}\label{sec:Grobner}	
		
		Gr\"{o}bner basis provide a powerful tool to perform computations on ideals in commutative algebra.
		Their use however extends far beyond such calculations having applications within mathematics, computer science physics and engineering.
		We now briefly describe the Gr\"{o}bner basis theory used later in the paper, for further details see \cite{Grobner2} or \cite{Grobner1}.
		The following results are stated over a Euclidean or principal ideal domain $R$.
		However throughout this paper we consider only the case when $R=\mathbb{Z}$.
		Gr\"{o}bner basis theory can be generalised to other rings and stronger results can be recovered over a field.
		
		Given a finite subset $A$ of $R[x_1,\dots,x_n]$, we denote by $\langle A\rangle$ the ideal generated by elements of $A$.
		Form now on we assume a total monomial ordering on the polynomial ring $R[x_1,\dots,x_n]$ which respects multiplication.
		In the course of this paper we take this order to be the lexicographic ordering.
		The \emph{leading term} of a polynomial with respect to an order is the term largest with respect to the order, the \emph{leading monomial} is the leading term multiplied by its coefficient the \emph{leading coefficient}.
    		For $f,g,p\in R[x_1,\dots,x_n]$, the polynomial $g$ is said to be {\it reduced} from $f$ by $p$, written
    		\begin{equation*}
    		    f \xrightarrow{p} g
    		\end{equation*}
    		if there exists a monomial $m$ in $f$ such that the leading monomial $l_p$ of $p$ divides $m$, say $m= m'l_p$ for some monomial $m'\in R[x_1,\dots,x_n]$ and $g=f-m'p$.
		    Let $R$ be a principal ideal domain and let $G$ be a finite subset of $R[x_1,\dots,x_n]$.
		    Then $G$ is a {\it Gr\"{o}bner basis} if any of the following equivalent conditions hold.
		    \begin{enumerate}
    		    \item
    		    The ideal of leading terms of $\langle G\rangle$ is equal to the ideal generated by the leading terms of $G$.
    		    \item
    		    All elements of $\langle G\rangle$ can be reduced to zero by elements of $G$.
    		    \item
    		    Leading terms of elements in $\langle G\rangle$ are divisible by a leading terms of an elements in $G$.
		    \end{enumerate}
		A set is called {\it decidable} if for any two elements input, there is an algorithm that can determine whether they are equal. 
		A ring is called {\it computable} if it is decidable as a set and there is an effectively computable algorithm for addition, multiplication and subtraction in the ring for an input of a pair of elements.
		A principal ideal domain is called a {\it computable principal ideal domain} if it is a computable ring, there is an algorithm that can effectively compute whether a given pair of elements is divisible and an extended Euclidean algorithm can be effectively computed. 
		Euclidean domain is a {\it computable Euclidean domain} if it is a computable ring and there is an algorithm that effectively computes division with remainder.
		The integers are a computable Euclidean domain.
		Moreover, as division with remainder can be applied to construct an extended Euclidean algorithm, so every computable Euclidean domain is also a computable principle ideal domain.
		
		If $R$ is a computable principle ideal domain, then for any ideal in $R[x_1,\dots,x_n]$, there exists a Gr\"{o}bner basis.
		In particular, for finite $A\subseteq R[x_1,\dots,x_n]$ there is an algorithm
		to obtain a Gr\"{o}bner basis $G$ such that $\langle G\rangle=\langle A\rangle$. 
		Over a field the most efficient algorithm is known as the Buchberger algorithm and can easily be implemented by a computer and a similar algorithm can used for principal ideal domains.
		Over a Euclidean domain computation speed might be improved with the implementing of more advanced algorithms \cite{Lichtblau13, Eder19}.
        A basic Gr\"{o}bner basis algorithm can be deduced from the following theorem.
        
		    Let $g_1,g_2\in R[x_1,\dots,x_n]$ be non-zero with leading terms $t_1,t_2$ and leading coefficients $c_1,c_2$.
		    Set $b_1,b_2\in R$ be such that $b_1c_1=b_2c_2=\lcm(c_1,c_2)$ and $s_1,s_2\in R[x_1,\dots,x_n]$ be such that $s_1t_1=s_2t_2=\lcm(t_1,t_2)$.
		    Then the $S$-polynomial of $g_1$ and $g_2$ is given by
		    \begin{equation*}
		        Spol(g_1,g_2) = b_1s_1g_1 - b_2s_2g_2.
		    \end{equation*}
		    Set $d_1,d_2\in R$ to be $d_1c_1=d_2c_2=\gcd(c_1,c_2)$.
		    Then $G$-polynomial of $g_1$ and $g_2$ is given by
		    \begin{equation*}
		        Gpol(g_1,g_2) = d_1s_1g_1 + d_2s_2g_2.
		    \end{equation*}

		\begin{theorem}[\cite{Grobner1}]\label{thm:GrobnerAlg}
		      A finite subset $G$ of $R[x_1,\dots,x_n]$ is a Gr\"{o}bner basis if for any $g_1,g_2\in G$
		      \begin{enumerate}
		              \item
		              $Spol(g_1,g_2)$ reduces to $0$ by $G$ and
		              \item
		              $Gpol(g_1,g_2)$ is reducible by an element of $G$ in its leading term.
		      \end{enumerate}
		\end{theorem}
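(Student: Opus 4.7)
The plan is to verify the third equivalent condition from the definition: every nonzero $f\in\langle G\rangle$ has leading term divisible (as an element of $R[x_1,\dots,x_n]$) by the leading term of some element of $G$.

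First, I would take $f\in\langle G\rangle$ and a representation $f=\sum_i h_i g_i$ with $g_i\in G$. Among all such representations, I would choose one minimizing the pair $(M,|I|)$ in lexicographic order, where $M=\max_i\mathrm{LM}(h_i g_i)$ and $I=\{i:\mathrm{LM}(h_i g_i)=M\}$. The goal is to show $M=\mathrm{LM}(f)$ and that $\mathrm{LT}(f)$ is divisible by $\mathrm{LT}(g_j)$ for some $j\in I$; assuming otherwise, I would derive a contradiction with minimality by producing a strictly better representation.

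The core of the argument is to show that if two distinct indices $i,j\in I$ contribute to an unwanted coefficient interaction at $M$, one can rewrite the representation so that either $M$ strictly decreases or $|I|$ strictly decreases. Here the two hypotheses come in. When the leading coefficients at $M$ cancel fully (i.e.\ when the scaling is by $\lcm(\mathrm{lc}(g_i),\mathrm{lc}(g_j))/\mathrm{lc}(g_i)$ and $\lcm(\mathrm{lc}(g_i),\mathrm{lc}(g_j))/\mathrm{lc}(g_j)$), I would invoke condition (1): $Spol(g_i,g_j)=b_1s_1g_i-b_2s_2g_j$ reduces to $0$ modulo $G$, i.e.\ $b_1s_1g_i-b_2s_2g_j=\sum_k q_k g_k$ with $\mathrm{LM}(q_k g_k)<\lcm(\mathrm{LM}(g_i),\mathrm{LM}(g_j))$. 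Multiplying this identity by the suitable monomial in $R[x_1,\dots,x_n]$ that sends $\lcm(\mathrm{LM}(g_i),\mathrm{LM}(g_j))$ to $M$ and substituting it into $\sum_i h_i g_i$ replaces the top-monomial contribution from $i,j$ with terms of leading monomial strictly below $M$, strictly decreasing the invariant. When instead two summands can be merged by combining their coefficients via Bezout, I would invoke condition (2): $Gpol(g_i,g_j)=d_1s_1g_i+d_2s_2g_j$ has leading coefficient $\gcd(\mathrm{lc}(g_i),\mathrm{lc}(g_j))$ at $\lcm(\mathrm{LM}(g_i),\mathrm{LM}(g_j))$ and is reducible by some $g_k\in G$; substituting the Bezout identity into the representation merges the two summands into one (reducing $|I|$), and the subsequent reducibility by $g_k$ allows a further rewrite that keeps $M$ weakly smaller.

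The main obstacle is the careful bookkeeping specific to the PID setting, which has no analogue over a field: leading coefficients carry information that cannot be scaled away, so the cancellation at $M$ must be decomposed into a monomial part, treated by $S$-polynomials, and a coefficient part, treated by $G$-polynomials. Concretely, at the top monomial $M$ one must analyze the module of syzygies on $\{\mathrm{LT}(h_i g_i):i\in I\}$ inside $R[x_1,\dots,x_n]$ and show that every such syzygy is generated by the $S$-pair and $G$-pair syzygies coming from pairs in $G$. Once this syzygy lifting is established, the induction on $(M,|I|)$ closes: the minimal representation must satisfy $M=\mathrm{LM}(f)$ and the leading coefficient of $f$ must lie in the ideal generated by $\{\mathrm{lc}(g_i):i\in I,\,\mathrm{LM}(g_i)\mid M\}$, whence $\mathrm{LT}(f)$ is divisible by the leading term of some $g_j\in G$, proving condition (3).
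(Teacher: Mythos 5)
The paper gives no proof of this theorem: it is imported from the cited reference and closed immediately with a square, so there is nothing internal to compare against. Your outline is the standard proof of the Buchberger-type criterion over a PID and is essentially correct: take a representation $f=\sum_i h_ig_i$ minimal for $(M,|I|)$, rewrite the cancellation at the top monomial via $S$-pairs (whose reduction to zero strictly lowers $M$), and use the $G$-pairs to realise gcd's of leading coefficients, forcing $M=\mathrm{LM}(f)$ and divisibility of $\mathrm{LT}(f)$ by a single $\mathrm{LT}(g_j)$.

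One phrase is loose rather than wrong: the ``$G$-pair syzygies'' are not syzygies. Over a PID the module of homogeneous syzygies of the terms $\mathrm{lc}(g_i)\,\mathrm{LM}(g_i)$ in a fixed degree $M$ is generated by the pairwise $\lcm$-relations alone, i.e.\ exactly by the $S$-pairs; this is why the full-cancellation case $M>\mathrm{LM}(f)$ is handled by condition (1) only. The $G$-polynomials enter solely in the non-cancelling case: $\mathrm{lc}(f)=\sum_{i\in I}\mathrm{lc}(h_i)\mathrm{lc}(g_i)$ lies in the ideal generated by the $\mathrm{lc}(g_i)$, which in a PID is generated by their gcd, and iterating condition (2) over pairs produces some $g\in G$ with $\mathrm{LM}(g)$ dividing $M$ and $\mathrm{lc}(g)$ dividing that gcd, whence $\mathrm{LT}(g)$ divides $\mathrm{LT}(f)$. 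You state this in substance in your closing paragraph, so it is a presentational correction, not a gap. Note also that the paper's displayed definition of $Gpol$, with $d_1c_1=d_2c_2=\gcd(c_1,c_2)$, is a misprint for the Bezout relation $d_1c_1+d_2c_2=\gcd(c_1,c_2)$, which is the version your argument correctly uses.
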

	    \hspace*{\fill} $\square$
	    
		In the computational part of our work it is important that a Gr\"{o}bner basis can be used to compute the intersection of ideals. This procedure is made explicit in the next remark.
		
		\begin{remark}\label{rmk:intersectionGrobner}
		    Let $A=\{a_1,\dots,a_s\}$ and $B=\{b_1,\dots,b_l\}$ be subsets of $R[x_1,\dots,x_n]$.
		    Take a Gr\"{o}bner basis $G$ of 
		    \begin{equation*}
		        \{ya_1,\dots,ya_t,(1-y)b_1,\dots,(1-y)b_l\}
		    \end{equation*}
		    in $R[x_1,\dots,x_n,y]$ using a monomial ordering in which monomials containing $y$ are larger than $y$ free monomials.
		    Then a Gr\"{o}bner basis of $\langle A\rangle\cap\langle B\rangle$ is given by the elements of $G$ that do not contain $y$.
		\end{remark}

        A Gr\"{o}bner basis over a principle ideal domain however is not unique.
		Importantly, different choices of ordering produce very different Gr\"{o}bner basis even when the basis is given in a reduced form.
		From now on let $E$ be a Euclidean domain.
		In this case, there may be a choice of remainder upon division resulting in alternative division algorithms and this might change the output of a Gr\"{o}bner basis algorithm. 
		However, assume the division algorithm is fixed.
		
		To find a Gr\"{o}bner basis which is unique in $E[x_1,\dots,x_n]$ we are required to be more precise about reduction.
		    For $f,g,p\in E[x_1,\dots,x_n]$, the polynomial $g$ is said to be \emph{E-reduced} from $f$ by $p$ to $g$,
    		if there exists a monomial $m=at$ in $f$ with
    		the leading term $l_p$ of $p$ dividing $t$ such that $t=sl_p$ and
    		\begin{equation*}
    		    g=f-qsp
    		\end{equation*}
    		for some non-zero $q\in E$ the quotient of $a$ upon division with unique remainder by $l_p$.
		    A Gr\"{o}bner basis basis $G$ in $E[x_1,\dots,x_n]$ is said to be \emph{reduced} if all polynomials in $G$ cannot be $E$-reduced by any other polynomial in $G$.
		
		\begin{theorem}[\cite{Kandri-Rody88}]
		    A reduced Gr\"{o}bner basis $G$ over $\mathbb{Z}[x_1,\dots,x_n]$ for which all leading monomials have positive coefficients is unique.
		\end{theorem}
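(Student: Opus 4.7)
The plan is to show that any two reduced Gr\"obner bases $G$ and $G'$ of the same ideal $I \subseteq \mathbb{Z}[x_1,\ldots,x_n]$ with positive leading coefficients must coincide. The argument proceeds in two steps: first, construct a bijection between $G$ and $G'$ matching the leading monomials (leading term together with leading coefficient); second, show that paired elements are equal via a cancellation argument inside $I$.

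For the bijection, fix $g \in G$. Since $\langle G\rangle = \langle G'\rangle = I$, the leading term of $g$ lies in the ideal of leading terms of $I$ and hence is divisible by the leading term of some $g' \in G'$; repeating with the roles of $G$ and $G'$ reversed, there exists $\tilde g \in G$ whose leading term divides that of $g'$, and therefore divides that of $g$. If $\tilde g \neq g$, then the leading term of $g$, viewed as a term of $g$ itself, would permit $E$-reduction by $\tilde g$, using the positivity of leading coefficients and the fixed Euclidean division algorithm on $\mathbb{Z}$; this contradicts the reducedness of $G$, so $\tilde g = g$ and the leading terms of $g$, $g'$ coincide. A parallel consideration of leading coefficients, again exploiting positivity and the unique remainder, yields equality of the full leading monomials, and the assignment $g \mapsto g'$ is a bijection.

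For the second step, pair $g \in G$ with $g' \in G'$ sharing the same leading monomial and set $h = g - g' \in I$. The leading monomials cancel, so if $h$ were non-zero its leading term $t$ would be strictly smaller and would appear with non-zero coefficient in $g$ or in $g'$. Because $G$ is a Gr\"obner basis, some $g_\star \in G$ has leading term dividing $t$; since $t$ lies strictly below the leading term of $g$, necessarily $g_\star \neq g$, so $g$ has a term $E$-reducible by another element of $G$, contradicting reducedness. If instead the relevant occurrence of $t$ comes from $g'$, the analogous argument using that $G'$ is also a reduced Gr\"obner basis gives a contradiction. Hence $h = 0$, so $g = g'$ and $G = G'$.

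The main obstacle is the leading-coefficient matching in Step 1. Over a field, monic normalisation trivialises this point, but over $\mathbb{Z}$ one must carefully combine the positivity hypothesis with the normalisation built into $E$-reduction via the fixed Euclidean division on $\mathbb{Z}$; without this precise interaction, reduced Gr\"obner bases in $\mathbb{Z}[x_1,\ldots,x_n]$ would fail to be unique (in contrast to the field case, there is genuine content in the hypothesis, and the proof must exploit it rather than treat leading terms abstractly).
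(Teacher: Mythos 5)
The paper itself gives no proof of this statement; it is quoted from Kandri-Rody and Kapur with the proof omitted, so your argument can only be judged on its own terms. Your overall strategy --- first match leading data between $G$ and $G'$, then show paired elements agree by subtracting --- is the standard one, and Step 1 goes through provided you consistently work with divisibility of leading \emph{monomials} (leading coefficient included), which is what the strong Gr\"obner property over $\mathbb{Z}$ supplies; bare divisibility of leading terms (power products) would not guarantee that the quotient of the leading coefficient of $g$ by that of $\tilde g$ is non-zero, so the E-reduction you invoke at the end of Step 1 would not be available.

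The genuine gap is in Step 2. From ``$t$ appears with non-zero coefficient $a$ in $g$ and the leading term of some $g_\star\neq g$ divides $t$'' you conclude that $g$ is E-reducible by $g_\star$, contradicting reducedness. Over $\mathbb{Z}$ this inference is false: E-reduction of the term $at$ requires the quotient of $a$ upon division with unique remainder by the leading coefficient $c_\star$ of $g_\star$ to be non-zero, which fails whenever $a$ is already a canonical remainder modulo $c_\star$. Concretely, $\{2x,\; x^2+x\}$ is a reduced strong Gr\"obner basis in $\mathbb{Z}[x]$ with positive leading coefficients even though both terms of $x^2+x$ have power product divisible by $x$, the leading term of $2x$; the coefficient $1$ gives quotient $0$ on division by $2$. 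Reduced bases over $\mathbb{Z}$ routinely contain exactly the configuration your argument declares impossible, and your ``one of $g$ or $g'$'' dichotomy cannot supply the contradiction on its own. The correct finish uses both irreducibility hypotheses simultaneously: if $a$ and $a'$ are the coefficients of $t$ in $g$ and $g'$, the strong Gr\"obner property applied to $h=g-g'\in I$ gives $c_\star\mid(a-a')$; irreducibility of $g$ by $g_\star$ and of $g'$ by its partner in $G'$ (which has the same leading monomial by Step 1) forces $a$ and $a'$ to both be canonical remainders modulo $c_\star$; two canonical remainders in the same residue class coincide, so $a=a'$ and the coefficient of $t$ in $h$ vanishes, contradicting the choice of $t$ as the leading term of $h$. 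With that congruence argument inserted, the proof closes; without it, Step 2 does not stand.
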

		\hspace*{\fill} $\square$
		
		In general, with coefficients in a Euclidean domain the uniqueness of a reduced Gr\"{o}bner bias holds up to multiplication by a units. However, in this paper we only consider Gr\"{o}bner basis of integer polynomials.
		The following theorem expands upon part (2) of the equivalent Gr\"{o}bner basis definitions above.
		
		\begin{theorem}[\cite{Grobner1}]\label{thm:GrobnerOver}
		    Let $G$ be a Gr\"{o}bner basis in $E[x_1,\dots,x_n]$.
		    Then all elements in $E[x_1,\dots,x_n]$ E-reduce by elements of $G$ to a unique representative in
		    \begin{equation*}
    		    \frac{E[x_1,\dots,x_n]}{\langle G\rangle}.
		    \end{equation*}
		\end{theorem}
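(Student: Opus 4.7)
The plan is to establish two properties of the E-reduction rewriting system on $E[x_1,\ldots,x_n]$ induced by $G$: termination on any input, and confluence of the resulting normal form. Since every single step $f\to g=f-qsp$ satisfies $f-g\in\langle G\rangle$, reduction preserves cosets modulo $\langle G\rangle$; combining this with termination and confluence then produces a unique representative in each coset of the quotient.

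For termination, the key observation is that a single E-reduction step replaces a monomial $at$ of $f$ by the term $rt$, where $r$ is the unique Euclidean remainder of $a$ upon division by the leading coefficient of $p$, together with strictly $t$-smaller monomials coming from the subleading terms of $sp$. Thus $t$ either disappears (when $r=0$) or its coefficient strictly decreases in Euclidean norm, and no monomial above $t$ is ever introduced. Ordering polynomials lexicographically on the multiset of their (monomial, coefficient-norm) pairs gives a well-founded order that is strictly decreased at each step, forcing every reduction chain to halt.

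The main obstacle is confluence, which I would deduce via Newman's lemma: given termination, it suffices to verify \emph{local} confluence, namely that whenever $f$ reduces in one step to both $h_1$ and $h_2$, the two share a common E-reduct. Reductions at distinct monomials of $f$ trivially commute, so the essential case has two basis elements $g_1,g_2\in G$ whose leading terms both divide the same monomial $at$ of $f$. Aligning the two reductions at $\lcm(l_{g_1},l_{g_2})$, the discrepancy $h_1-h_2$ decomposes into contributions controlled by $Spol(g_1,g_2)$ and $Gpol(g_1,g_2)$; Theorem~\ref{thm:GrobnerAlg} then guarantees both reduce further through $G$, supplying the required common descendant. Upgrading from unique normal forms per input to unique representatives per coset then follows from the Gr\"{o}bner basis characterisation that every element of $\langle G\rangle$ reduces to zero: if $h$ and $h'$ were distinct irreducible representatives of the same coset, the reduction of $h-h'\in\langle G\rangle$ to zero would force a reduction on the leading term of $h-h'$, contradicting the irreducibility of the corresponding coefficients of $h$ and $h'$ under the uniqueness of the Euclidean remainder.
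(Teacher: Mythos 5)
The paper does not prove this statement: it is imported from \cite{Grobner1} and stated with a terminating $\square$, so there is no internal argument to compare yours against. Judged on its own, your proof follows the standard rewriting-theoretic route (termination, confluence, and the ``difference of two normal forms'' argument), and this is the right strategy; indeed your final paragraph alone, combined with termination, already yields the theorem without needing Newman's lemma, since existence of a normal form plus the impossibility of two distinct irreducible representatives in one coset is exactly the claim.

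Two points need tightening. First, the termination measure is not literally correct: with the non-negative remainder convention in $\mathbb{Z}$, reducing a coefficient $a=-1$ by a divisor with leading coefficient $3$ gives $-1=(-1)\cdot 3+2$, so the new coefficient $2$ has strictly larger Euclidean norm. Termination still holds (after one step the coefficient at that monomial is a canonical remainder and every further reduction there strictly decreases it, while only lower monomials are created), but the well-founded order has to be chosen to accommodate this one-step increase. Second, and more importantly, your closing contradiction silently uses the \emph{strong} Gr\"obner basis property: that the leading \emph{monomial} (coefficient included) of any nonzero element of $\langle G\rangle$ is divisible by the leading monomial of some $g\in G$. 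Only then do the coefficients of $h$ and $h'$ at the top differing monomial become two canonical remainders modulo the same leading coefficient $c_g$ differing by a nonzero multiple of $c_g$, which the uniqueness of the Euclidean remainder forbids. Under the weaker, coefficient-free reading of condition (3) in Section~\ref{sec:Grobner} the theorem is simply false --- $G=\{2x,3x\}$ generates $\langle x\rangle$ and satisfies the term-level divisibility condition, yet $x$ and $0$ are distinct irreducible elements of the same coset --- so you must state explicitly that divisibility of leading coefficients is part of the hypothesis you are using. The same strong form is what justifies your appeal to the $G$-polynomial condition in the local confluence step, since Theorem~\ref{thm:GrobnerAlg} as stated only gives the implication in the direction opposite to the one you need.
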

		\hspace*{\fill} $\square$
    
        \subsection{Cohomology of complete flag manifolds}\label{sec:CohomCompFlag}

		A manifold $M$ is called homogeneous if it can be equipped with a transitive $G$ action for some Lie groups $G$.
		In this case, $M \cong G/H$ for some Lie subgroup $H$ of $G$ isomorphic to the orbit of a point in $M$.
		A Lie subgroup $T$ of a Lie group $G$ isomorphic to a torus is called maximal
		if any Lie subgroup also isomorphic to a torus containing $T$ coincidences with $T$.
		The next proposition is straightforward to show, see for example \cite[\S $5.3$]{MT2} Theorem $3.15$.
		
		\begin{proposition}\label{prop:tori}
				The conjugate of a torus in $G$ is a torus and all maximal tori are conjugate.
				In addition given a maximal torus $T$, for all $x\in G$ there exists an element $g\in G$ such that $g^{-1}xg\in T$.
				Hence the union of all maximal tori is $G$.
		\end{proposition}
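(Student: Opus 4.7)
The plan is to handle the three assertions in sequence. For the first, I would observe that for any $g\in G$ the conjugation $c_g\colon G\to G$, $x\mapsto gxg^{-1}$, is a Lie group automorphism. A torus is by definition a compact connected abelian Lie subgroup, and these three properties are preserved under any Lie group automorphism, so $c_g(T)$ is again a torus. The same argument shows that conjugation preserves maximality: if $c_g(T)\subseteq T'$ for some torus $T'$, applying $c_{g^{-1}}$ gives $T\subseteq c_{g^{-1}}(T')$, and maximality of $T$ forces equality.

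The crux of the proposition is the statement that every $x\in G$ is conjugate into a fixed maximal torus $T$. My approach is the classical Lefschetz fixed point argument on $G/T$. For $x\in G$, define the left translation $L_x\colon G/T\to G/T$ by $gT\mapsto xgT$. Since $G$ is connected, $L_x$ is homotopic to $L_e=\mathrm{id}$, so its Lefschetz number equals $\chi(G/T)$. The essential input is that $\chi(G/T)\neq 0$; this follows from the Bruhat/Schubert cell decomposition, which gives $\chi(G/T)=|W|>0$ where $W$ is the Weyl group. The Lefschetz fixed point theorem then produces some $gT$ with $xgT=gT$, i.e.\ $g^{-1}xg\in T$. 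The final sentence is then immediate: writing $x=gtg^{-1}$ with $t\in T$, we have $x\in gTg^{-1}$, which is a maximal torus by the first claim.

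Conjugacy of any two maximal tori $T,T'$ follows quickly. By Kronecker's theorem the torus $T'$ admits a topological generator $x\in T'$, so $\overline{\langle x\rangle}=T'$. Applying the previous paragraph to $x$ and $T$ gives $g\in G$ with $g^{-1}xg\in T$, and since $T$ is closed and conjugation is continuous,
\begin{equation*}
    g^{-1}T'g \;=\; \overline{\langle g^{-1}xg\rangle} \;\subseteq\; T.
\end{equation*}
But $g^{-1}T'g$ is itself a maximal torus by the first claim, so $g^{-1}T'g=T$. The main obstacle is the non-triviality of $\chi(G/T)$; this is genuinely an external input from the structure theory of compact Lie groups rather than a computation that can be carried out in a few lines, which is why I would either cite the Bruhat decomposition or, for the specific case $G=SU(n+1)$, $T=T^n$ relevant to the paper, invoke the explicit value $\chi(SU(n+1)/T^n)=(n+1)!$.
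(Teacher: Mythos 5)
The paper does not actually prove this proposition: it is stated as a standard fact with the proof deferred to the literature (Mimura--Toda, \S 5.3, Theorem 3.15), so there is no in-paper argument to compare against. Your proposal is the classical Hopf--Samelson/Weil proof of the maximal torus theorem, and it is correct as a proof sketch: conjugation is an automorphism, so it preserves tori and maximality; the Lefschetz fixed point theorem applied to left translation on $G/T$ (using $L_x\simeq \mathrm{id}$ by connectedness of $G$ and $\chi(G/T)=|W|\neq 0$) conjugates any element into $T$; and a topological generator of $T'$ together with closedness of $T$ upgrades this to conjugacy of maximal tori. Two small points worth making explicit. First, the argument (and indeed the statement) requires $G$ to be a compact connected Lie group --- compactness so that $G/T$ is a closed manifold to which the Lefschetz theorem applies, connectedness so that $L_x$ is homotopic to the identity; this hypothesis is implicit in the paper's context but should be stated. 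Second, as you acknowledge, the nonvanishing of $\chi(G/T)$ is a genuine external input; if one wants to avoid importing the Bruhat decomposition, the self-contained route is Hopf--Samelson's direct count of the fixed points of $L_t$ on $G/T$ for a generic $t\in T$ (they are the points of $N(T)/T$, each of index $+1$), which yields $\chi(G/T)=|W|>0$ without circularity. With those caveats your argument is complete and is almost certainly the same one as in the cited reference.
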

		\hspace*{\fill} $\square$
		
		It is therefore unambiguous to refer to the maximal torus $T$ of $G$ and consider the quotient $G/T$, which is isomorphic regardless of the choice of $T$.
		The homogeneous space $G/T$ is called the {\it complete flag manifold} of $G$.
		The rank of Lie group $G$ is the dimension of a maximal torus $T$.
		The ranks of classical simple Lie groups can be deduced by considering the standard maximal tori,
		see for example~\cite{MT}.
	
	    Borel \cite{Borel} studied in detail the cohomology of homogeneous spaces,
	    in particular deducing the rational cohomology of $G/T$.
		Following later work of Bott, Samelson, Toda, Watanabe and others the integral cohomology of complete flag manifolds of all simple Lie groups were deduced.
		The integral cohomology of the complete flag manifolds of the special unitary groups is as follows.
	
		\begin{theorem}[\cite{Borel}, \cite{AplicationsOfMorse}]\label{thm:H*SU/T}
			For each integer $n\geq 0$, the cohomology of the complete flag manifold of the simple Lie group $SU(n+1)$ is given by
			\begin{equation*}
				H^*(SU(n+1)/T^n;\mathbb{Z})=\frac{\mathbb{Z}[\gamma_1,\dots,\gamma_{n+1}]}{\langle\sigma_1,\dots,\sigma_{n+1}\rangle}
			\end{equation*}
			where $|\gamma_i|=2$.
		\end{theorem}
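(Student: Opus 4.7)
The plan is to realise Theorem~\ref{thm:H*SU/T} as a direct consequence of Borel's classical computation for homogeneous spaces $G/T$ of a compact connected Lie group with torsion-free integral cohomology, applied to $G = U(n+1)$. First I would use the homeomorphism $SU(n+1)/T^n \cong U(n+1)/T^{n+1}$, where $T^{n+1}$ is the diagonal maximal torus of $U(n+1)$; the central $S^1 \subset U(n+1)$ acts freely on both numerator and denominator and its cancellation produces the isomorphism. This reduces the problem to computing the cohomology of the complete flag variety of $\mathbb{C}^{n+1}$.

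Next I would consider the Borel fibration associated with the inclusion $i\colon T^{n+1} \hookrightarrow U(n+1)$,
\begin{equation*}
    U(n+1)/T^{n+1} \hookrightarrow BT^{n+1} \xrightarrow{Bi} BU(n+1),
\end{equation*}
and feed in the known classifying space cohomologies $H^*(BT^{n+1};\mathbb{Z}) = \mathbb{Z}[\gamma_1,\dots,\gamma_{n+1}]$ with $|\gamma_i|=2$, and $H^*(BU(n+1);\mathbb{Z}) = \mathbb{Z}[c_1,\dots,c_{n+1}]$ where $c_l$ is the $l$-th Chern class. The splitting principle identifies $(Bi)^*(c_l) = \sigma_l(\gamma_1,\dots,\gamma_{n+1})$, realising $H^*(BT^{n+1};\mathbb{Z})$ as an algebra over $H^*(BU(n+1);\mathbb{Z})$ via the elementary symmetric polynomials.

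Then I would run the cohomology Leray-Serre spectral sequence of the Borel fibration. Since both base and total space have cohomology concentrated in even degrees, and $BU(n+1)$ is simply connected, all odd differentials vanish for parity reasons and the local system is trivial. The key input for collapse at $E_2$ is the classical algebraic fact that $\mathbb{Z}[\gamma_1,\dots,\gamma_{n+1}]$ is a free module of rank $(n+1)!=|W|$ over the invariant subring $\mathbb{Z}[\sigma_1,\dots,\sigma_{n+1}]$, and that $\sigma_1,\dots,\sigma_{n+1}$ is a regular sequence. Matching Poincar\'e series of the proposed quotient against the fibre cohomology forces the edge map to induce the surjection
\begin{equation*}
    \mathbb{Z}[\gamma_1,\dots,\gamma_{n+1}] \twoheadrightarrow H^*(U(n+1)/T^{n+1};\mathbb{Z})
\end{equation*}
with kernel exactly $\langle \sigma_1,\dots,\sigma_{n+1}\rangle$.

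The main obstacle is upgrading the additive conclusion drawn from the $E_\infty$-page to the asserted multiplicative description and, simultaneously, to verify that no extension problems appear over $\mathbb{Z}$. Both difficulties are resolved by the regularity of $\sigma_1,\dots,\sigma_{n+1}$ together with the freeness of $H^*(BT^{n+1};\mathbb{Z})$ over $(Bi)^*H^*(BU(n+1);\mathbb{Z})$: freeness annihilates the $\Tor$ terms that would otherwise obstruct the identification, and regularity guarantees that the coinvariant ring has the correct additive rank, so that the canonical ring map from $\mathbb{Z}[\gamma_1,\dots,\gamma_{n+1}]/\langle\sigma_1,\dots,\sigma_{n+1}\rangle$ to $H^*(U(n+1)/T^{n+1};\mathbb{Z})$ is an isomorphism.
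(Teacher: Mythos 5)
The paper offers no proof of Theorem~\ref{thm:H*SU/T}: it is quoted as a classical result of Borel and of Bott--Samelson/Toda, so there is no in-paper argument to compare against. Your outline is the standard classical proof, and its main ingredients are all correct: the reduction $SU(n+1)/T^n\cong U(n+1)/T^{n+1}$, the Borel fibration $U(n+1)/T^{n+1}\to BT^{n+1}\to BU(n+1)$, the identification $(Bi)^*(c_l)=\sigma_l$ via the splitting principle, and the freeness of $\mathbb{Z}[\gamma_1,\dots,\gamma_{n+1}]$ over $\mathbb{Z}[\sigma_1,\dots,\sigma_{n+1}]$ of rank $(n+1)!=|W|$, which is what ultimately controls both the additive and the multiplicative structure.

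One step needs care. In the Leray--Serre spectral sequence of the Borel fibration the \emph{fibre} is $U(n+1)/T^{n+1}$, whose cohomology is precisely the unknown, so the claim that ``all odd differentials vanish for parity reasons'' is circular as written: evenness of the base $BU(n+1)$ and of the total space $BT^{n+1}$ does not by itself force evenness of the fibre. You must either first establish that $H^*(U(n+1)/T^{n+1};\mathbb{Z})$ is free and concentrated in even degrees --- for instance from the Schubert cell decomposition, or by induction along the fibrations $U(n)/T^n\to U(n+1)/T^{n+1}\to\mathbb{CP}^n$ --- after which the Serre spectral sequence collapses and your edge-map and Poincar\'e series argument goes through; or else run the Eilenberg--Moore spectral sequence of the pullback square defining $U(n+1)/T^{n+1}$, where the freeness of $H^*(BT^{n+1};\mathbb{Z})$ over $H^*(BU(n+1);\mathbb{Z})$ kills all higher $\Tor$ terms and places $E_\infty$ in a single column, yielding the ring isomorphism with $\mathbb{Z}[\gamma_1,\dots,\gamma_{n+1}]/\langle\sigma_1,\dots,\sigma_{n+1}\rangle$ with no extension problems. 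With either repair the argument is complete and agrees with the cited classical sources.
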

        \hspace*{\fill} $\square$
    
    \subsection{Based loop space cohomology of $SU(n)$}\label{sec:LoopLie}
		
		The Hopf algebras of the based loop space of Lie groups were studied by Bott in \cite{bott1958}.
		More recently, Grbi{\'c} and Terzi{\'c} \cite{homology_Lflags} showed that the integral homology of the based loop space of a complete flag manifold is torsion free
	    and found the integral Pontrjagin homology algebras of the complete flag manifolds of compact connected simple Lie groups
	    $SU(n)$, $Sp(n)$, $SO(n)$, $G_2$, $F_4$ and $E_6$.
		Recall that the integral divided polynomial algebra on variables $x_1,\dots,x_n$ is given by
		\begin{equation*}
			\Gamma_{\mathbb{Z}}[x_1,\dots,x_n]=\frac{\mathbb{Z}[(x_i)_1,(x_i)_2,\dots]}{\langle(x_i)_k-k!x_i^k\rangle}
		\end{equation*}
		where $1\leq i \leq n$, $k\geq 1$ and $x_i=(x_i)_1$.
		The next theorem is obtained
		using a Leray-Serre spectral sequence argument applied to the path space fibrations 
		\begin{equation*}
		    \Omega SU(n) \to PSU(n) \to SU(n).
		\end{equation*}
		
		\begin{theorem}\label{thm:LoopSU(n)}
			For each $n\geq 1$, the cohomology of the based loop space of the classical simple Lie group $SU(n)$ is given by
			\begin{equation*}
				H^*(\Omega(SU(n));\mathbb{Z})=\Gamma_{\mathbb{Z}}[x_2,x_4,\dots,x_{2n-2}]
			\end{equation*}
			where $|x_i|=i$ for $i=2,4,\dots,2n-2$.
		\end{theorem}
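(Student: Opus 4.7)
The plan is to apply the cohomology Leray-Serre spectral sequence of the path-loop fibration $\Omega SU(n) \to PSU(n) \to SU(n)$ and exploit the contractibility of $PSU(n)$, which forces every class of positive total degree on $E_\infty$ to vanish. Since $SU(n)$ is $2$-connected, the fiber $\Omega SU(n)$ is simply connected and the local coefficient system is trivial, so the $E_2$ page takes the tensor-product form $E_2^{p,q} = H^p(SU(n);\mathbb{Z}) \otimes H^q(\Omega SU(n);\mathbb{Z})$. The starting input is the classical Hopf theorem that $H^*(SU(n);\mathbb{Z}) = \Lambda(y_3, y_5, \ldots, y_{2n-1})$ is an exterior algebra on odd-degree generators with $|y_{2i+1}| = 2i+1$.

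The first step is to locate the generators of $H^*(\Omega SU(n);\mathbb{Z})$. Each indecomposable $y_{2i+1}$ on the bottom row must be killed, and this is only possible via its transgression, which is therefore forced to produce a nonzero class $x_{2i} := \tau(y_{2i+1}) \in H^{2i}(\Omega SU(n);\mathbb{Z})$ for $i = 1, \ldots, n-1$. The second step is to determine the remaining structure of the fiber by induction on total degree, introducing only those classes necessary to guarantee that every surviving $E_2$ element is eventually killed, while simultaneously verifying torsion-freeness of $H^*(\Omega SU(n);\mathbb{Z})$ (which can alternatively be corroborated by comparison with mod-$p$ computations).

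The main obstacle will be establishing the integral divided-power structure. Multiplicativity and the derivation property of $d_{2i+1}$ give $d_{2i+1}(y_{2i+1} \cdot z) = x_{2i} \cdot z$ for any class $z$ lying purely in the fiber. Since $y_{2i+1}^2 = 0$ in the base, the only way a power $x_{2i}^k$ can be hit is via a differential out of $y_{2i+1}$ times a fiber class, and tracking the resulting constraints forces the introduction of new generators $(x_{2i})_k \in H^{2ik}(\Omega SU(n);\mathbb{Z})$ satisfying $x_{2i}\cdot (x_{2i})_{k-1} = k\,(x_{2i})_k$, equivalently $x_{2i}^k = k!\,(x_{2i})_k$. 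The full set of relations $(x_{2i})_j\,(x_{2i})_k = \binom{j+k}{j}(x_{2i})_{j+k}$ is then dictated by requiring every higher slot on the $y_{2i+1}$-column to collapse as well; this is precisely the phenomenon captured by Kudo's transgression theorem applied integrally.

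Finally, because the transgressive generators $y_{2i+1}$ lie in mutually distinct base degrees and their transgressions land in additively independent bidegrees, no cross relations between the different $x_{2i}$ are introduced. The outcome assembles into the tensor product of divided polynomial algebras, giving $H^*(\Omega SU(n);\mathbb{Z}) = \Gamma_{\mathbb{Z}}[x_2, x_4, \ldots, x_{2n-2}]$ with $|x_{2i}| = 2i$, as claimed.
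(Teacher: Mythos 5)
The paper offers no proof of this statement to compare against: it simply names the path--loop fibration $\Omega SU(n)\to PSU(n)\to SU(n)$ and closes the theorem with a box, deferring to Bott's classical work. So your write-up is an attempt to supply the omitted details, and its skeleton --- triviality of the local coefficients, $E_2^{p,q}=H^p(SU(n))\otimes H^q(\Omega SU(n))$ with $H^*(SU(n);\mathbb{Z})=\Lambda(y_3,\dots,y_{2n-1})$, and transgression of each $y_{2i+1}$ forced by the contractibility of $PSU(n)$ --- is the intended route and correctly produces the generators $x_{2i}=\tau(y_{2i+1})$ and the additive (Poincar\'e series) answer.

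The gap is in the step where you pass from additive bookkeeping to the ring structure. For $n\geq 3$ the divided-power relations are a genuine multiplicative extension problem that the Serre spectral sequence does not resolve, and your assertion that the transgressions land in ``additively independent bidegrees'' so that ``no cross relations between the different $x_{2i}$ are introduced'' is exactly the unproved step. Concretely, for $n=3$ the group $H^4(\Omega SU(3);\mathbb{Z})$ has rank $2$: one class $w$ with $d_3(w)=y_3x_2$ and one class $z$ spanning $\ker d_3$ with $d_5(z)=\pm y_5$. The Leibniz rule only gives $d_3(x_2^2)=2y_3x_2$, hence $x_2^2=2w+cz$ for some integer $c$ that the spectral sequence leaves completely undetermined; the single-generator argument that works for $\Omega S^{2i+1}$ relies on the relevant groups having rank one, which fails here. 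Worse, the extension resolves against you: Bott's generating complex $\mathbb{C}P^{n-1}\to\Omega SU(n)$ is an isomorphism on $H^2$ and surjective on $H^4$, so by naturality $x_2^2=\mathrm{Sq}^2x_2\neq 0$ in $H^4(\Omega SU(n);\mathbb{Z}/2)$ for $n\geq 3$ (equivalently, under $\Omega SU(n)\to\Omega SU\simeq BU$ the class $x_2^2$ corresponds to the indivisible element $c_1^2$ of $H^4(BU;\mathbb{Z})=\mathbb{Z}\{c_1^2,c_2\}$). Hence $x_2^2$ is not divisible by $2$ and the relation $x_2^2=2(x_2)_2$ cannot hold: the identification with $\Gamma_{\mathbb{Z}}[x_2,\dots,x_{2n-2}]$ is correct additively but not as cup-product rings once $n\geq 3$, the actual ring being the dual of Bott's Hopf algebra $H_*(\Omega SU(n);\mathbb{Z})\cong\mathbb{Z}[b_1,\dots,b_{n-1}]$ with $\Delta b_k=\sum_{i+j=k}b_i\otimes b_j$. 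Any repair of your argument must bring in this extra input (the generating complex, Steenrod operations, or the Hopf-algebra duality) rather than the spectral sequence alone, and will then prove a different multiplicative statement from the one claimed.
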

	    \hspace*{\fill} $\square$

\section{Combintorial coefficients}\label{sec:Comintorial}
		
		Before studying the the cohomology of the free loop space of $SU(n+1)/T^n$
		we first analyse some of the combinatorial structures that appear in the cohomology algebras.
		
		\subsection{Binomial coefficients}\label{subsec:Binomial} 
		
			The binomial coefficients $\binom{n}{k}$ are defined to be the number of size $k$ subsets of a size $n$ set
			and they satisfy the recurrence relation
			    $\binom{n}{k}=\binom{n-1}{k}+\binom{n-1}{k-1}$.
			It is easily shown by induction on $n$ that for $0\leq k \leq n$, $\binom{n}{k}=\frac{n!}{k!(n-k)!}$ and is zero otherwise.
			The binomial coefficients also satisfy the well known formulas
			\begin{equation}\label{eq:binom}
				\sum_{k=0}^n{\binom{n}{k}}=2^n \;\; \text{and} \;\; \sum^{n}_{k=0}{(-1)^k \binom{n}{k}}=0.
			\end{equation}
			
		\subsection{Multinomial coefficients}
		
		    Throughout this paper for integers $k\geq 2$ and $n,a_1,\dots,a_k \geq 0$, we set
		    \begin{equation}\label{eq:Multinomial}
		        \binom{n}{a_1,\dots,a_k}=\frac{n!}{a_1!\cdots a_k!}.
		    \end{equation}
		    The following expansion in term of binomial coefficients is easily verified,
		    \begin{equation*}\label{eq:MultinomilExpansionOriginal}
		        \binom{n}{a_1,\dots,a_k}=\binom{n}{a_1}\binom{n-a_1}{a_2}\cdots\binom{n-a_1-\cdots- a_{k-1}}{a_k}.
		    \end{equation*}
		    In particular
		    \begin{equation}\label{eq:MultinomilExpansion}
		        \binom{n}{a_1,\dots,a_k}=\binom{n}{a_2,\dots,a_{k-1}}\binom{n-a_1-\cdots-a_{k-1}}{a_k}.
		    \end{equation}
		        When $a_1+\cdots+a_k=n$, the expressions in equation~(\ref{eq:Multinomial}) are called the the \emph{multinomial coefficients}.
		        In this case the combinatorial interpretation of the coefficients is as the number of way to partition a size $n$ set into subsets of sizes $a_1,\dots,a_k$.
		    
		\subsection{Multiset coefficients}
		
				The number of size $k$ multisets that can be formed from elements of a size $n$ set is denoted by $\multiset{n}{k}$ and these numbers are called the {\it multiset coefficients}.
			It is well know that $\multiset{n}{k}=\binom{n+k-1}{k}$,
			hence $\multiset{n}{k}=\multiset{n-1}{k}+\multiset{n}{k-1}$.
			To the best of our knowledge the identity in the next lemma has not been shown before.
			\begin{lemma}\label{lem:combino}
				For integers $n,m\geq 1$,
				\begin{equation*}
						\sum^n_{k=0}{(-1)^k\binom{n}{k}\multiset{n}{m-k}}=0.
				\end{equation*}
			\end{lemma}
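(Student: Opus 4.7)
The plan is to prove the identity by a generating function argument. Recall the two standard power series identities
\begin{equation*}
    (1-x)^n = \sum_{k=0}^{n} (-1)^k \binom{n}{k} x^k, \qquad (1-x)^{-n} = \sum_{j\geq 0} \multiset{n}{j} x^j,
\end{equation*}
the first being the binomial theorem applied with $-x$, and the second the well-known expansion of the negative binomial series using $\multiset{n}{j}=\binom{n+j-1}{j}$. Multiplying these two series together gives the constant series $1$.

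Next I would identify the sum appearing in the lemma with a coefficient in that product. Forming the Cauchy product of the two series above and extracting the coefficient of $x^m$ yields
\begin{equation*}
    [x^m]\,(1-x)^n (1-x)^{-n} \;=\; \sum_{k=0}^{m} (-1)^k \binom{n}{k} \multiset{n}{m-k}.
\end{equation*}
The summation range $0\leq k\leq m$ may be freely replaced by $0\leq k\leq n$ because $\binom{n}{k}=0$ for $k>n$ and $\multiset{n}{m-k}=0$ for $k>m$ (using the convention $\binom{r}{s}=0$ when $s<0$), so extra terms contribute nothing. Hence the sum in the lemma equals $[x^m]\,1$, which is $0$ for any $m\geq 1$.

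The argument is essentially a one-line identity, and the only minor bookkeeping is to justify the manipulation of the summation index range, which is routine; no serious obstacle is expected. An alternative combinatorial proof via inclusion-exclusion is possible, interpreting $\multiset{n}{m-k}$ as counting weak compositions and $\binom{n}{k}$ as choosing forbidden coordinates, but the generating function approach is cleaner and fits naturally with the identity $(1-x)^n(1-x)^{-n}=1$ that underlies equation~(\ref{eq:ElementaryHomogeneousRelation}) already in play in this paper.
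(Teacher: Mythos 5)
Your proof is correct, but it takes a genuinely different route from the paper. The paper proves the identity by induction on $n$, expanding $\binom{t}{k}=\binom{t-1}{k}+\binom{t-1}{k-1}$ and $\multiset{t}{m-k}=\multiset{t-1}{m-k}+\multiset{t}{m-k-1}$ simultaneously and then tracking which boundary terms survive the resulting telescoping; this is elementary and self-contained but requires careful bookkeeping of the vanishing edge terms. Your argument instead reads the sum off as the coefficient of $x^m$ in $(1-x)^n(1-x)^{-n}=1$, which collapses the whole lemma to one line once the two standard expansions are quoted. Your handling of the summation range is the only point needing care, and you address it correctly: for $k>n$ the binomial coefficient vanishes, and for $k>m$ one needs the convention $\multiset{n}{j}=0$ for $j<0$, which is consistent with the combinatorial definition and with how the paper itself treats terms like $\multiset{t}{m-t-1}$ in its own proof. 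What each approach buys: the paper's induction stays entirely within finite combinatorics and matches the style of its companion Lemma~\ref{lem:StirlingIdentity}, while your generating-function argument is shorter, explains \emph{why} the identity holds (it is the Cauchy-product form of the inverse pair underlying equation~(\ref{eq:ElementaryHomogeneousRelation})), and generalises immediately to the full symmetric-function statement.
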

			\begin{proof}
				We prove the statement by induction on $n$.
				When $n=1$, 
				\begin{equation*}
						\sum^n_{k=0}{(-1)^k\binom{n}{k}\multiset{n}{m-k}}
						=\binom{1}{0}\multiset{1}{m}-\binom{1}{1}\multiset{1}{m-1}
						=\binom{m}{m}-\binom{m-1}{m-1}=0.
				\end{equation*}
				Suppose the lemma holds for $n=t-1\geq1$, then
				\begin{align*}
						&\sum^t_{k=0}{(-1)^k\binom{t}{k}\multiset{t}{m-k}}
						=\sum^t_{k=0}{(-1)^k\bigg(\binom{t-1}{k}+\binom{t-1}{k-1}\bigg)\multiset{t}{m-k}} \\
						&=\sum^t_{k=0}{(-1)^k\bigg(\binom{t-1}{k-1}\multiset{t}{m-k}+\binom{t-1}{k}\multiset{t-1}{m-k}+\binom{t-1}{k}\multiset{t}{m-k-1}\bigg)}=0
				\end{align*}
				as the middle term sum $\sum^{t-1}_{k=0}{\binom{t-1}{k}\multiset{t-1}{m-k}}=0$ by assumption and all other terms cancel except for $\binom{t-1}{-1}\multiset{t}{m}$, $\binom{t-1}{t}\multiset{t-1}{m-t}$ and $\binom{t-1}{t}\multiset{t}{m-t-1}$
				all of which are zero.
		\end{proof}
    
    \subsection{Stirling numbers of the second kind}\label{subsec:Stirling}
    
        Along with binomial, multinomial and multiset coefficients, Sterling numbers of the second kind appear as part of the so called \emph{12-fold way}, a class of enumerative problems concerned with the combinations of balls in boxes problems.
        The \emph{Stirling numbers of the second kind} $\stirling{n}{m}$, denote the number of ways to partition an $n$ element set into $m$ non-empty subsets.
        The Stirling numbers of the second kind satisfy a recurrence relations 
        \begin{equation}\label{eq:StirlingRecurence}
            \stirling{n}{m} = \stirling{n-1}{m-1} + m \stirling{n-1}{m}
        \end{equation}
        for integers $n\geq m \geq 1$.
        From the combinatoral definitions, we obtain the relationship between Stirling numbers of the second kind and multinomial coefficients, given by
        \begin{equation}\label{eq:StirlingExpansion}
            m!\stirling{n}{m}=
            \sum_{\substack{a_1,\dots,a_m \geq 1 \\ a_1+\cdots+a_m=n }}
            \binom{n}{a_1,\dots,a_m}.
        \end{equation}
        To the best of our knowledge the identity in the next lemma has not been shown before.
        \begin{lemma}\label{lem:StirlingIdentity}
            For each $n \geq 1$,
            \begin{equation*}
                \sum^n_{m=1}{(-1)^m m! \stirling{n}{m}} = (-1)^n.
            \end{equation*}
        \end{lemma}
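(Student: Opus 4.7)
My plan is to prove the lemma by induction on $n$, using the recurrence relation (\ref{eq:StirlingRecurence}) for the Stirling numbers of the second kind. Let $S_n := \sum_{m=1}^n (-1)^m m!\stirling{n}{m}$, so the goal is to show $S_n = (-1)^n$.

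The base case $n=1$ is immediate: since $\stirling{1}{1}=1$, we have $S_1 = -1 = (-1)^1$. For the inductive step, assume $S_{n-1}=(-1)^{n-1}$ for some $n \geq 2$. Substituting the recurrence $\stirling{n}{m} = \stirling{n-1}{m-1} + m\stirling{n-1}{m}$ into $S_n$ splits it into two sums. In the first sum, I shift the index by setting $k=m-1$, which (after using $\stirling{n-1}{0}=0$ for $n\geq 2$ to drop the $k=0$ term) produces
\begin{equation*}
    -\sum_{k=1}^{n-1}(-1)^k(k+1)\cdot k!\stirling{n-1}{k}.
\end{equation*}
In the second sum I use $\stirling{n-1}{n}=0$ to truncate the upper limit to $n-1$, leaving
\begin{equation*}
    \sum_{m=1}^{n-1}(-1)^m m \cdot m! \stirling{n-1}{m}.
\end{equation*}
Combining the two and collecting the factor $m-(m+1)=-1$ inside the sum yields exactly $-S_{n-1} = -(-1)^{n-1} = (-1)^n$, completing the induction.

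The only minor subtlety I anticipate is the careful bookkeeping of boundary terms after the index shift, specifically ensuring that the $k=0$ term and the $m=n$ term vanish so that the two sums align over the same range $1\leq m \leq n-1$; once that is handled, the telescoping $m-(m+1) = -1$ makes the identity fall out immediately. No genuine obstacle is expected, as the computation is a routine application of the Stirling recurrence, and the whole proof fits comfortably in a short paragraph.
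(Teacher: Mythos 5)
Your proof is correct and follows essentially the same route as the paper's: both substitute the recurrence $\stirling{n}{m}=\stirling{n-1}{m-1}+m\stirling{n-1}{m}$, shift the index in the first sum, truncate using the vanishing boundary terms, and combine to get $S_n=-S_{n-1}$. The boundary bookkeeping you flag is handled correctly, so nothing further is needed.
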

        \begin{proof}
            The formula is easily seen to hold for the case $n=1$.
            Using the recurrence relation in equation~(\ref{eq:StirlingRecurence}) and induction on $n$,
            \begin{align*}
                \sum^n_{m=1}{(-1)^m m! \stirling{n}{m}}
                =& \sum^n_{m=1}{(-1)^m m! \left( \stirling{n-1}{m-1} + m \stirling{n-1}{m} \right) } \\
                =&\sum^n_{m=1}{(-1)^m m! \stirling{n-1}{m-1}} + \sum^n_{m=1}{(-1)^m m!m \stirling{n-1}{m}} \\
                =&\sum^{n-1}_{m=1}{(-1)^{m+1} (m+1)! \stirling{n-1}{m}} + \sum^{n-1}_{m=1}{(-1)^m m!m \stirling{n-1}{m}} \\
                =&\sum^{n-1}_{m=1}{(-1)^{m+1} m!(m+1) \stirling{n-1}{m}} + \sum^{n-1}_{m=1}{(-1)^m m!m \stirling{n-1}{m}} \\
                =&\sum^n_{m=1}{(-1)^{m+1}\left( m!(m+1)-m!m \right) \stirling{n-1}{m}} \\
                =&\sum^n_{m=1}{(-1)^{m+1} m! \stirling{n-1}{m}}
                =(-1)^{n}.
            \end{align*}
        \end{proof}
        By an expansion using the recurrence relation in equation~(\ref{eq:StirlingRecurence}) and Lemma~\ref{lem:StirlingIdentity},
        the well known relation $\sum^n_{m=1}{(-1)^m (m-1)! \stirling{n}{m}} = 0$ can be easily derived.
    
    \section{Alternative forms of the symmetric ideal}\label{sec:IdeaForms}
    
		Replacing $\sigma_i$ with the complete homogeneous symmetric polynomials $h_i$ as generators of the symmetric ideal, leads to a simplification of the generator expressions, practical for working with $H^*(SU(n+1)/T^n)$ as demonstrated in the next section.
			
		For each integer $n\geq 1$ and all integers $1 \leq k' \leq k \leq n$,
		define $\Phi(k,k')$ to be the sum of all monomials in $\mathbb{Z}[x_1,\dots,x_n]$ of degree $k$ in variables $x_1,\dots,x_{n-k'+1}$.
			
		\begin{theorem}\label{thm:monomial sum}
			In the ring $\frac{\mathbb{Z}[x_1,\dots,x_n]}{\langle h_1,\dots,h_n\rangle}$
			for each $1 \leq k' \leq k \leq n$, $\Phi(k,k')=0$.
			In addition 
			\begin{equation}\label{eq:OrigIdeal}
				\langle h_1,\dots,h_n\rangle=\langle\Phi(1,1),\dots,\Phi(n,n)\rangle.
			\end{equation}
			Moreover these new ideal generators form a reduced Gr\"obner bases for the ideal of $n$ variable symmetric polynomials with respect to the lexicographic term order on variables $x_1<\cdots<x_n$.
		\end{theorem}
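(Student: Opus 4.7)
The engine of the proof is the standard one-variable recurrence for the complete homogeneous symmetric polynomial, $h_k(y_1,\ldots,y_m) = h_k(y_1,\ldots,y_{m-1}) + y_m\, h_{k-1}(y_1,\ldots,y_m)$, which applied with $m = n-k'+1$ translates directly to the basic identity
\[
\Phi(k, k') = \Phi(k, k'+1) + x_{n-k'+1}\, \Phi(k-1, k'), \qquad k' < k.
\]
This single relation drives all three assertions.

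For the vanishing claim $\Phi(k, k') \equiv 0$ in the quotient, I would induct on $k'$. The base $k' = 1$ is immediate since $\Phi(k, 1) = h_k$. For the inductive step, rewrite the recurrence as $\Phi(k, k'+1) = \Phi(k, k') - x_{n-k'+1}\, \Phi(k-1, k')$: whenever $k \ge k' + 1$, both terms on the right vanish in the quotient by the induction hypothesis (applied at $k$ and at $k-1 \ge k'$), and hence so does the left.

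For equation~(\ref{eq:OrigIdeal}), one inclusion is immediate from the first claim. The reverse inclusion follows from the stronger auxiliary statement that \emph{every} $\Phi(k, k')$ lies in $\langle \Phi(1,1),\ldots,\Phi(n,n)\rangle$, proved by nested induction (outer on $k$, inner downward on $k'$ starting at $k' = k$). The inner base $k' = k$ is trivial, and the inner step again applies the recurrence, since $\Phi(k,k'+1)$ is controlled by the inner hypothesis and $\Phi(k-1,k')$ by the outer. Specialising to $k' = 1$ gives $h_k \in \langle \Phi(1,1),\ldots,\Phi(n,n)\rangle$.

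The reduced Gr\"{o}bner basis claim is the main hurdle. The leading monomial of $\Phi(k,k) = h_k(x_1,\ldots,x_{n-k+1})$ under the lex order with $x_1 < \cdots < x_n$ is the pure power $x_{n-k+1}^k$, with leading coefficient $1$. Reducedness is a two-case check on $k' \neq k$: if $k' > k$ then $x_{n-k+1}$ is not a variable of $\Phi(k', k')$ at all, whereas if $k' < k$ every monomial of $\Phi(k', k')$ has total degree $k' < k$; either way $x_{n-k+1}^k$ divides no monomial of $\Phi(k', k')$. For the Gr\"{o}bner basis property itself I would avoid the direct $S$-polynomial calculation from Theorem~\ref{thm:GrobnerAlg} in favour of verifying condition~(2) of the Gr\"{o}bner basis definition by a rank argument. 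Because each leading coefficient is $1$, division by $\{\Phi(k,k)\}$ always terminates (the leading monomial strictly decreases at each step) and reduces any polynomial to a $\mathbb{Z}$-linear combination of the Artin monomials $\{x_1^{a_1}\cdots x_n^{a_n} : 0 \le a_i \le n-i\}$, of which there are exactly $n!$. The coinvariant quotient $\mathbb{Z}[x_1,\ldots,x_n]/\langle h_1,\ldots,h_n\rangle$ is classically $\mathbb{Z}$-free of rank $n!$ (an indexing shift of Theorem~\ref{thm:H*SU/T}, combined with identity~(\ref{eq:ElementaryHomogeneousRelation}) to identify $\langle h_1,\ldots,h_n\rangle$ with $\langle\sigma_1,\ldots,\sigma_n\rangle$). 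Since a surjection of $\mathbb{Z}$-free modules of the same finite rank is necessarily an isomorphism, the $n!$ Artin monomial classes must form a $\mathbb{Z}$-basis of the quotient, and are in particular $\mathbb{Z}$-linearly independent there. Consequently, for any $f \in \langle h_1,\ldots,h_n\rangle$ the reduction of $f$ to an Artin combination, which itself lies in the ideal, must vanish, verifying condition~(2).
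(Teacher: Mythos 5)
Your proposal is correct. For the first two claims you use exactly the same engine as the paper: the one\nobreakdash-variable recurrence $\Phi(k,k')=\Phi(k,k'+1)+x_{n-k'+1}\Phi(k-1,k')$ is precisely the telescoped identity $h_m-\Phi(m-1,1)x_n-\cdots-\Phi(m-1,m'-1)x_{n-m'+2}=\Phi(m,m')$ that drives the paper's induction, and your nested induction for equation~(\ref{eq:OrigIdeal}) just makes explicit the triangularity ("each $\Phi(k,k)$ is $h_k$ plus terms from $h_1,\dots,h_{k-1}$") that the paper states in one sentence. Where you genuinely diverge is the reduced Gr\"obner basis claim: the paper's written proof stops after the ideal equality and never argues this part at all, whereas you supply a complete verification. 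Your reducedness check (leading monomial $x_{n-k+1}^{k}$ of $\Phi(k,k)$ with unit leading coefficient divides no monomial of $\Phi(k',k')$ for $k'\neq k$, by a variable-support argument when $k'>k$ and a degree argument when $k'<k$) is correct, and your rank argument --- division by the $\Phi(k,k)$ terminates and lands in the span of the $n!$ sub-staircase monomials $x_1^{a_1}\cdots x_n^{a_n}$ with $a_i\leq n-i$, while the coinvariant quotient is classically $\mathbb{Z}$-free of rank $n!$ via Theorem~\ref{thm:H*SU/T} and identity~(\ref{eq:ElementaryHomogeneousRelation}), so a surjection of free $\mathbb{Z}$-modules of equal finite rank forces the irreducible monomials to be independent in the quotient and every ideal element to reduce to zero --- cleanly verifies condition (2) of the Gr\"obner basis definition without any $S$-polynomial or $G$-polynomial computation. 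This is a legitimate and arguably preferable route: it trades the combinatorial bookkeeping of Theorem~\ref{thm:GrobnerAlg} for one classical input (Artin's freeness of the coinvariant algebra), and it fills a step the paper leaves unargued.
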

		\begin{proof}
			Note that by definition, $h_k=\Phi(k,1)$.
			We prove by induction on $k$ that for each $1 \leq k' \leq k \leq n$, $\Phi(k,k')\in \langle h_1,\dots,h_n\rangle$.
			When $k=1$, by definition
				$h_1=\Phi(1,1)$.
			Assume the statement is true for all $k<m\leq n$.
			By induction, $\Phi(m-1,m')\in [h_1,\dots,h_n]$ for all $1 \leq m'\leq m-1$.
			Note that $\Phi(m-1,m')x_{n-m'+1}$ is the sum of all monomials of degree $m$  in variables $x_1,\dots,x_{n-m'+1}$ divisible by $x_{n-m'+1}$.
			Hence, for each $1\leq m'\leq m-1$
			\begin{equation*}
				h_m-\Phi(m-1,1)x_n-\cdots-\Phi(m-1,m'-1)x_{n-m'+2}=\Phi(m,m').
			\end{equation*}
			At each stage of the proof the next $\Phi(k,k)$ is obtained as a sum of $h_k$ and polynomials obtained from $h_1,\dots,h_{k-1}$.
			Hence $\langle\Phi(1,1),\dots,\Phi(n,n)\rangle$ and $\langle h_1,\dots,h_n \rangle$ are equal.
		\end{proof}
			
		For integers $0\leq a\leq b$, denote by $h_a^b$ the complete homogeneous polynomial in variables $x_1,\dots,x_b$ of degree $a$.
		Then equation~(\ref{eq:OrigIdeal}) can be written as
		\begin{equation}\label{eq:SipleRedusingHomogenious}
			\langle h_1^n,\dots,h_n^n\rangle=\langle h_1^n,h_2^{n-1},\dots,h_n^1\rangle.
		\end{equation}
		A useful intermediate form of Theorem~\ref{thm:monomial sum} is separately set out in the next proposition.
			
		\begin{proposition}\label{prop:Homogeneous-1}
			For each $n\geq 1$,
			\begin{equation*}
				\langle h_1^n,\dots,h_n^n\rangle=\langle h_1^n,h_2^{n-1}\dots,h_n^{n-1}\rangle.
			\end{equation*}
		\end{proposition}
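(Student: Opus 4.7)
The plan is to prove the two ideal containments using the single elementary identity
\begin{equation*}
h_m^n = h_m^{n-1} + x_n\,h_{m-1}^n, \qquad m \geq 1,
\end{equation*}
(with the convention $h_0^n = 1$), which follows by splitting the degree-$m$ monomials in $x_1,\dots,x_n$ counted by $h_m^n$ according to whether or not they are divisible by $x_n$. This is essentially the $m' = 2$ instance of the formula already derived in the proof of Theorem~\ref{thm:monomial sum}, since $\Phi(m,2) = h_m^{n-1}$ and $\Phi(m-1,1) = h_{m-1}^n$.

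For the inclusion $\langle h_1^n, h_2^{n-1},\dots,h_n^{n-1}\rangle \subseteq \langle h_1^n,\dots,h_n^n\rangle$, I would rearrange the identity as $h_m^{n-1} = h_m^n - x_n\,h_{m-1}^n$. This exhibits each generator $h_m^{n-1}$ (for $2 \leq m \leq n$) of the right-hand ideal as an element of the left-hand ideal, while $h_1^n$ is common to both generating sets.

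For the reverse inclusion, I would induct on $m$ to show that $h_m^n$ lies in $\langle h_1^n, h_2^{n-1},\dots,h_n^{n-1}\rangle$ for every $1 \leq m \leq n$. The base case $m=1$ is immediate because $h_1^n$ is itself a generator of the right-hand ideal. For the inductive step, the identity $h_m^n = h_m^{n-1} + x_n\,h_{m-1}^n$ decomposes $h_m^n$ as the generator $h_m^{n-1}$ plus $x_n$ times the element $h_{m-1}^n$, which lies in the right-hand ideal by the inductive hypothesis.

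There is no substantive obstacle here; the whole argument rests on the one splitting identity above. The proposition amounts to running only the first stage of the iterative rewriting in the proof of Theorem~\ref{thm:monomial sum} (stopping at $k' = 2$ instead of continuing down to $k' = k$), and it is recorded separately because this intermediate form of the symmetric ideal is what gets used in the sequel.
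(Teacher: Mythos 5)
Your proof is correct and rests on exactly the same identity the paper uses, namely $h_m^n - x_n h_{m-1}^n = h_m^{n-1}$ (the paper writes it as $h^n_{i+1}-x_n h^n_i=h^{n-1}_{i+1}$ and performs the eliminations in sequence for $i=n-1$ down to $i=1$). Your write-up simply makes the two ideal containments and the induction explicit, which is a more careful rendering of the same argument.
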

			
		\begin{proof}
			For each $1\leq i\leq n-1$
			\begin{equation*}
				h^n_{i+1}-x_n h^n_i=h^{n-1}_{i+1}.
			\end{equation*}
			We can rearrange the ideal to achieve the desired result by performing the above elimination in sequence on the ideal for $i={n-1}$ to $i=1$.
		\end{proof}
			
		\begin{remark}\label{remk:SymQotForms}
			By Theorem~\ref{thm:monomial sum} and Proposition~\ref{prop:Homogeneous-1} eliminating the last variable in $\mathbb{Z}[x_1,\dots,x_n]$,
			by rewriting $h_1$ as $x_n=-x_1-\cdots-x_{n-1}$ gives
			\begin{equation*}
				\frac{\mathbb{Z}[x_1,\dots,x_n]}{\langle h_1^n,\dots,h_n^n\rangle}
				\cong\frac{\mathbb{Z}[x_1,\dots,x_{n-1}]}{\langle h_2^{n-1},\dots,h_n^{n-1}\rangle}
				\cong\frac{\mathbb{Z}[x_1,\dots,x_{n-1}]}{\langle h_2^{n-1},\dots,h_n^{1}\rangle}.
			\end{equation*}
		\end{remark}

\section{Determining spectral sequence differentials}\label{sec:FreeLoopSU(n+1)/Tn}

    The aim of this section is to determine the differentials in a spectral sequence converging to the free loop cohomology $H^{*}(\Lambda(SU(n+1)/T^n);\mathbb{Z})$ for $n \geq 1$.
	The case when $n=0$ is trivial as $SU(1)$ is a point.
	The approach of the argument is similar to that of \cite{cohololgy_Lprojective},
	in which the cohomology of the free loop spaces of spheres and complex projective space are calculated using spectral sequence techniques. 
	However the details in the case of the complete flag manifold of the special unitary group are considerably more complex, requiring a sophisticated combinatorial argument arising form the structure of the complete homogeneous symmetric function not present for simpler spaces.

    \subsection{Differentials in the spectral sequence of the diagonal fibration}\label{sec:evalSS}

		For any space $X$, the map $eval \colon Map(I,X) \to X\times X$ is given by $\alpha \mapsto (\alpha(0),\alpha(1))$.
		It can be shown directly that $eval$ is a fibration with fiber $\Omega X$.
		In this section we compute the differentials in the cohomology Serre spectral sequence of this fibration for the case $X=SU(n+1)/T^n$.
		The aim is to compute $H^{*}(\Lambda(SU(n+1)/T^n);\mathbb{Z})$.
		The map $eval \colon \Lambda X \to X$ given by evaluation at the base point of a free loop is also a fibration with fiber $\Omega X$.
		This is studied in Section~\ref{sec:diff} by considering a map of fibrations from the evaluation fibration for $SU(n+1)/T^n$ to the diagonal fibration
		and hence the induced map on spectral sequences. For the rest of this section we consider the fibration
		\begin{equation}\label{eq:evalfib}
			\Omega (SU(n+1)/T^n) \to Map(I,SU(n+1)/T^n) \xrightarrow{eval} SU(n+1)/T^n\times SU(n+1)/T^n. 
		\end{equation}
		By extending the fibration $T^n \to SU(n+1) \to SU(n+1)/T^n$, we obtain the homotopy fibration sequence
		\begin{equation}\label{eq:SU/Tfib}
		    \Omega(SU(n+1)) \to \Omega(SU(n+1)/T^n) \to T^n \to SU(n+1).
		\end{equation}
		It is well known see \cite{CohomologyOmega(G/U)}, that the furthest right map above that is, the inclusion of the maximal torus into $SU(n+1)$ is null-homotopic.
		Hence there is a homotopy section $T^n \to \Omega(SU(n+1)/T^n)$.
		Therefore, as \eqref{eq:SU/Tfib}
		is a principle fibration, there is a space decomposition $\Omega(SU(n+1)/T^n) \simeq \Omega(SU(n+1)) \times T^n$.
		Using the K\"{u}nneth formula, we obtain the algebra isomorphisum
		\begin{equation}\label{eq:BaseLoopFlag}
			H^*(\Omega(SU(n+1)/T^n);\mathbb{Z}) \cong H^*(\Omega(SU(n+1);\mathbb{Z}) \otimes H^*(T^n;\mathbb{Z})
			\cong \Gamma_{\mathbb{Z}}[x'_2,x'_4,\dots,x'_{2n}] \otimes \Lambda_{\mathbb{Z}}(y'_1,\dots,y'_n)
		\end{equation}
		where $\Gamma_{\mathbb{Z}}[x'_2,x'_4,\dots,x'_{2n}]$ is the integral divided polynomial algebra on $x'_2,\dots,x'_{2n}$
		with $|x'_i|=i$ for each $i=2,\dots,2n$ and
		$\Lambda(y'_1,\dots,y'_n)$ is an exterior algebra generated by $y'_1,\dots,y'_n$
		with $|y'_j|=1$ for each $j=1,\dots,n$.
		Since
		\begin{equation*}
			Map(I,SU(n+1)/T^n)\simeq SU(n+1)/T^n
		\end{equation*}
		by Theorem~\ref{thm:H*SU/T} 
		all cohomology algebras of spaces in fibration (\ref{eq:evalfib}) are known.
		By studying the long exact sequence of homotopy groups associated to the fibration $T^n \to SU(n+1) \to SU(n+1)/T^n$,
		we obtain that $SU(n+1)/T^n$ and hence $SU(n+1)/T^n \times SU(n+1)/T^n$ are simply connected.
		Therefore the cohomology Serre spectral sequence of fibration (\ref{eq:evalfib}), denoted by $\{\bar{E}_r,\bar{d}^r\}$, 
		converges to $H^*(SU(n+1)/T^n;\mathbb{Z})$ with $\bar{E}_2$-page
		\begin{equation*}
		    \bar{E}^{p,q}_2=H^p(SU(n+1)/T^n\times SU(n+1)/T^n;H^q(\Omega(SU(n+1)/T^n);\mathbb{Z})).
		\end{equation*}
		In the following arguments we use the notation
			
		\begin{center}
			$H^*(Map(I,SU(n+1)/T^n);\mathbb{Z}) \cong \frac{\mathbb{Z}[\lambda_1,\dots,\lambda_{n+1}]}{\langle\sigma^{\lambda}_1,\dots,\sigma^{\lambda}_{n+1}\rangle}$
		\end{center}
		and
		\begin{center}
			$H^*(SU(n+1)/T^n \times SU(n+1)/T^n;\mathbb{Z}) \cong 
			\frac{\mathbb{Z}[\alpha_1,\dots,\alpha_{n+1}]}{\langle\sigma^{\alpha}_1,\dots,\sigma^{\alpha}_{n+1}\rangle} \otimes
			\frac{\mathbb{Z}[\beta_1,\dots,\beta_{n+1}]}{\langle\sigma^{\beta}_1,\dots,\sigma^{\beta}_{n+1}\rangle}
			$
		\end{center}
		where $|\alpha_i|=|\beta_i|=|\lambda_i|=2$ for each $i=1,\dots,n+1$ and $\sigma^{\lambda}_i,\sigma^{\alpha}_i$
		and $\sigma^{\beta}_i$ are the elementary symmetric polynomials in $\lambda_i,\alpha_i$ and $\beta_i$, respectively.
			
		\begin{center}
			\begin{tikzpicture}
				\matrix (m) [matrix of math nodes,
					nodes in empty cells,nodes={minimum width=5ex,
					minimum height=5ex,outer sep=-5pt},
					column sep=1ex,row sep=1ex]{
					&	 \vdots		&	 \vdots	 							&		 														 & 		  &						   &			 &					& \\
					&	  2n\;  	& \langle x'_{2n} \rangle&		 														 & 		  &						   &    	 &    			& \\
					&	 \vdots   &  \vdots	 							&		 														 & 		  &						   &   		 &  			  & \\
					&			6     &   \langle x'_6 \rangle &		 														 & 		  & 						 &\dots &   			  & \\
					H^{*}(\Omega(SU(n+1)/T^n;\mathbb{Z})) &			4     &   \langle x_4\rangle  &		 														 & 		  & 						 & 			 &					& \\
					&			2     &   \langle x'_2\rangle  &																 &\dots& 		 				 & 			 &   			  & \\
					&			      &  			   							&		 														 & 		  & 		 				 & 			 &          & \\
					&			1			&		\langle y'_i\rangle	&	\;\;\;			\lcdot	\;\;\;		 &\lcdot&\;\;\lcdot\;\;& \cdots& \lcdot		& \cdots \\	
					&		  0     &  			   							& \langle\alpha_i,\beta_i\rangle &\lcdot&\lcdot				 & \cdots&  \lcdot  & \cdots \\						
					&\quad\strut &     0    							&				2												 &		4	&	 6	 				 & \cdots&     2n   & \cdots \strut \\};
				\draw[-stealth] (m-2-3.south east) -- (m-8-8.north west);
				\draw[-stealth] (m-5-3.south east) -- (m-8-5.north);
				\draw[-stealth] (m-4-3.south east) -- (m-8-6.north);
				\draw[-stealth] (m-6-3.south) -- (m-8-4.north);
				\draw[-stealth] (m-8-3.south east) -- (m-9-4.north west);
				\draw[-stealth] (m-8-4.south east) -- (m-9-5.north west);
				\draw[-stealth] (m-8-5.south east) -- (m-9-6.north west);
				\draw[-stealth] (m-8-7.south east) -- (m-9-8.north west);
				\draw[thick] (m-1-2.east) -- (m-10-2.east) ;
				\draw[thick] (m-10-2.north) -- (m-10-9.north) ;
			\end{tikzpicture}
			\label{fig:evalSS}
		\end{center}
		\begin{center}
			$\;\;\;\;\;\;\;\;\;\;\;\;\;\;\;\;\;\;\;\;\;\;\;\;\;\;\;\;\;\;\;\;\;\;\;\;\;\;\;\;\; H^{*}(SU(n+1)/T^n\times SU(n+1)/T^n;\mathbb{Z})$
		\end{center}
		\begin{center}
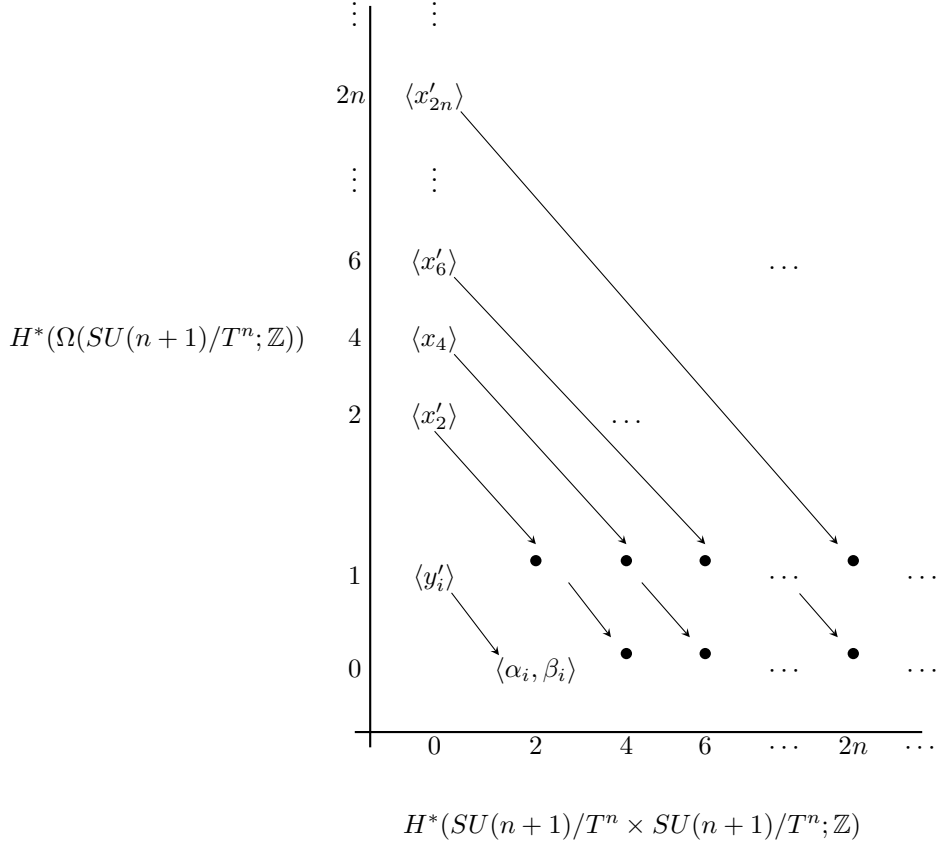

		    \captionof{figure}{Generators in the integral cohomology Leray-Serre spectral sequence $\{\bar{E}_r,\bar{d}^r\}$ converging to $H^{*}(Map(I,SU(n+1));\mathbb{Z})$.}
		\end{center}
			
		In the remainder of this section we describe explicitly the images of differentials shown in Figure~\ref{fig:evalSS}
		and show that all other differential not generated by these differentials using the Leibniz rule are zero.
		It will often be useful to use the alternative basis
		\begin{equation}\label{eq:ChangeBasis}
			v_i=\alpha_i-\beta_i \text{ and } u_i=\beta_i
		\end{equation}
		for $H^{*}(SU(n+1)/T^n\times SU(n+1)/T^n;\mathbb{Z})$, where $i=1,\dots,n+1$. 
		The following lemma determines completely the $\bar{d}^2$ differential on $\bar{E}_2^{*,1}$.	
			
		\begin{lemma}\label{lem:E^2_{*,1}d^2}
			With the notation above, in the cohomology Leray-Serre spectral sequence of fibration (\ref{eq:evalfib}),
			there is a choice of basis $y'_1,\dots,y'_n$ such that
		\begin{center}
			$\bar{d}^2(y'_i)=v_i$
		\end{center}
			for each $i=1,\dots,n$.
		\end{lemma}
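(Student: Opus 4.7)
The strategy is to sidestep a direct differential calculation by using convergence of the spectral sequence together with the observation that the total space of~\eqref{eq:evalfib} is homotopy equivalent to $SU(n+1)/T^n$. The inclusion of constant paths $c\colon SU(n+1)/T^n \hookrightarrow \operatorname{Map}(I, SU(n+1)/T^n)$ is a homotopy equivalence with homotopy inverse either evaluation $p_0$ or $p_1$, and since $p_0 \simeq p_1$ via the obvious homotopy $(\alpha, t) \mapsto \alpha(t)$, the evaluation $eval = (p_0, p_1)$ corresponds under this equivalence to the diagonal $\Delta$. Therefore $\{\bar E_r, \bar d^r\}$ converges to $H^*(SU(n+1)/T^n;\mathbb{Z})$, and the edge homomorphism in degree $2$ is $\Delta^*$.

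I would next pin down $\ker \Delta^{*}$ in degree $2$. Since $SU(n+1)/T^n$ is simply connected, as can be read off from the long exact sequence of homotopy groups of $T^n \to SU(n+1) \to SU(n+1)/T^n$, the K\"unneth theorem gives $H^2(SU(n+1)/T^n \times SU(n+1)/T^n;\mathbb{Z}) \cong H^2(SU(n+1)/T^n;\mathbb{Z})^{\oplus 2}$, and by definition $\Delta^*(\alpha_i) = \Delta^*(\beta_i) = \gamma_i$. In the basis $v_i = \alpha_i - \beta_i$, $u_i = \beta_i$ given in~\eqref{eq:ChangeBasis}, we therefore have $\Delta^*(v_i) = 0$ and $\Delta^*(u_i) = \gamma_i$. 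Using the single relation $\sum_{i=1}^{n+1} v_i = \sigma^{\alpha}_1 - \sigma^{\beta}_1 = 0$, the elements $v_1, \dots, v_n$ span a free direct summand $\ker \Delta^{*} \cong \mathbb{Z}^n$.

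The remaining step combines the standard factorisation $H^2(X \times X) = \bar E_2^{2,0} \twoheadrightarrow \bar E_\infty^{2,0} \hookrightarrow H^2(\operatorname{Map}(I, X))$ of the edge homomorphism with the fact that only $\bar d^2$ can map into $\bar E_r^{2,0}$ for any $r \geq 2$; this yields $\operatorname{im}(\bar d^2) = \ker \Delta^{*} = \langle v_1, \dots, v_n \rangle \cong \mathbb{Z}^n$. By the algebra isomorphism~\eqref{eq:BaseLoopFlag} the source $\bar E_2^{0,1} \cong H^1(T^n;\mathbb{Z}) \cong \mathbb{Z}^n$ is also free of rank $n$, so $\bar d^2$ is a surjection between free abelian groups of the same rank, and hence an isomorphism. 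Taking $y'_1, \dots, y'_n$ to be the preimages of $v_1, \dots, v_n$ under $\bar d^2$ furnishes the desired basis.

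The main technical point is the identification of $eval$ with $\Delta$ up to homotopy, ensuring that the edge homomorphism coincides with $\Delta^*$; once this is in place, everything reduces to a K\"unneth computation in degree $2$ and a rank count on free $\mathbb{Z}$-modules.
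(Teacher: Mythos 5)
Your proposal is correct and follows essentially the same route as the paper: both identify $eval$ with $\Delta$ via the homotopy equivalence $p_0$, use convergence to $H^*(\operatorname{Map}(I,SU(n+1)/T^n);\mathbb{Z})\cong H^*(SU(n+1)/T^n;\mathbb{Z})$ together with the observation that only $\bar d^2$ can interact with $\bar E_*^{2,0}$, and conclude that $\operatorname{im}(\bar d^2)$ in degree $2$ is the kernel of the cup product, generated by $v_1,\dots,v_n$. Your explicit rank count showing $\bar d^2\colon \bar E_2^{0,1}\to\langle v_1,\dots,v_n\rangle$ is an isomorphism of free $\mathbb{Z}$-modules of rank $n$ just makes explicit a step the paper leaves implicit.
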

				
		\begin{proof}
			There is a homotopy commutative diagram
	        \begin{equation*}\label{fig:evalcd}
			\xymatrix{
			{SU(n+1)/T^n} \ar[r]^(.37){\Delta}								   & {SU(n+1)/T^n\times SU(n+1)/T^n} \ar@{=}[d] \\
			{Map(I,SU(n+1)/T^n)} \ar[r]_(.42){eval} \ar[u]_{p_0} & {SU(n+1)/T^n\times SU(n+1)/T^n} }
		    \end{equation*}
		    where $p_0$, given by $\psi \mapsto \psi(0)$, is a homotopy equivalence and $\Delta$ is the diagonal map.
		    As the cup product is induced by the diagonal map, $eval^*$ has the same image as the cup product.
		    For dimensional reasons, $\bar{d}^2$ is the only possible non-zero differential ending at any $\bar{E}_*^{2,0}$ and no non-zero differential have domain in any $\bar{E}_*^{2,0}$.
		    Therefore in order for the spectral sequence to converge to $H^{*}(Map(I,SU(n+1)/T^n))$,
		    the image of $\bar{d}^2 \colon \bar{E}_2^{0,1} \to \bar{E}_2^{2,0}$ must be the kernel of the cup product on $H^*(SU(n+1)/T^n\times SU(n+1)/T^n;\mathbb{Z})$, 
		    which is generated by $v_1,\dots,v_n$.
		\end{proof}
				
		\begin{remark}\label{rmk:unique}
			The only remaining differentials on generators left to determine are those with domain in
			$\langle x'_2,x'_4\dots,x'_{2n} \rangle$ on some page $\bar{E}_r$ for $r \geq 2$.
			For dimensional reasons, the elements $x'_2,x'_4,\dots,x'_{2n}$ cannot be in the image of any differential.
			By Lemma~\ref{lem:E^2_{*,1}d^2}, the generators $u_1,\dots,u_n$ must survive to the $\bar{E}_{\infty}$-page,
			so generators $x'_2,x'_4,\dots,x'_{2n}$ cannot.	
			This is due to dimensional reasons combined with the fact that the spectral sequence must converge to $H^*(SU(n+1)/T^n)$.
			Now assume inductively that for each $i=1,\dots,n$ and $1\leq j<i$, $\bar{d}^{2j}$ is constructed.
			For dimensional reasons and due to all lower rows except $\bar{E}_r^{*,2}$ and $\bar{E}_r^{*,1}$ being annihilated by differentials already determined at lower values of
			$1\leq j<i$, the only possible non-zero differential beginning at $x'_{2i}$
			is $\bar{d}^{2i}:\bar{E}_{2i}^{0,2i}\to \bar{E}_{2i}^{2i,1}$.
			The image of each of the differentials $\bar{d}^{2i}$ will therefore be a unique class in $\bar{E}_{2i}^{2i,1}$
			in the kernel of $\bar{d}^2$ not already contained in the image of any $\bar{d}^r$ for $r<2i$.
			
			We note also that the elements $(x_{2i})_m$ for each $m \geq 2$ are also generators on the $\bar{E}_2$-page of the spectral sequence.
		    The differentials in the spectral sequence are also completely determined on all $(x_{2i})_m$ by their image on $x_{2i}$ in the following way.
			Using the relations $x_{2i}^m-m!(x_{2i})_m$ and the Leibniz rule, it follows that $\bar{d}^2(x_{2i}^m) = m\bar{d}^2(x_{2i})x_{2i}^{m-1}$ and hence again using the relations
			\begin{equation*}
                \bar{d}^2((x_{i2})_m) = \bar{d}^2(x_{2i})(x_{2i})_{m-1}.
            \end{equation*}
            Due to the fact that $\bar{d}^{2i}(x_{2i})$ must be non-trivial, there are no torsion elements on the spectral sequence pages $E_i^{*,*}$ and $(x_{2i})_m$ cannot be in the image of any differential,
            we obtain that
            $\bar{d}^r((x_{2i})_m) = \bar{d}^r(x_{2i})(x_{2i})_{m-1} = 0$ for $2\leq r<2i$.
            Therefore we can apply the same augments used to derive the $E_2$-page equation above on the $E_{2i}$-page we have that
            \begin{equation}\label{eq:DifOnDivPoly}
                \bar{d}^{2i}((x_{2i})_m) = \bar{d}^{2i}(x_{2i})(x_{2i})_{m-1}.
            \end{equation}
		\end{remark}
				
		We have $\bar{d}^2(u_i)=\bar{d}^2(v_i)=0$ and by Lemma~\ref{lem:E^2_{*,1}d^2} we may assume that $\bar{d}^2(y'_i)=v_i$ for each $i=1,\dots,n$.
		All non-zero generators $\gamma \in \bar{E}_2^{*,1}$ can be expressed in the form
		\begin{center}
			$\gamma = y'_k u_{i_1} \cdots u_{i_s} v_{j_1} \cdots v_{j_t}$
		\end{center}
		for some $1\leq k \leq n$, $1\leq i_1<\cdots<i_s\leq n$ and $1\leq j_1<\cdots<j_t\leq n$.
		Therefore, $\bar{d}^2(\gamma)$ is zero only if it is contained in
		$\langle \sigma^{\alpha}_1,\dots,\sigma^{\alpha}_{n+1}, \sigma^{\beta}_1,\dots,\sigma^{\beta}_{n+1}\rangle$.
		Hence it is important to understand the structure of the symmetric polynomials
		$\sigma^{\alpha}_1,\dots,\sigma^{\alpha}_{n+1}, \sigma^{\beta}_1,\dots,\sigma^{\beta}_{n+1}$.
		From Remark~\ref{remk:SymQotForms}, we see that
		polynomials $\sigma^{\alpha}_1$ and $\sigma^{\beta}_1$ can be thought of as expressions for $\alpha_{n+1}$ and $\beta_{n+1}$ in terms of the other generators of the ideal and generators 
		$\sigma^{\alpha}_2,\dots,\sigma^{\alpha}_{n+1}, \sigma^{\beta}_2,\dots,\sigma^{\beta}_{n+1}$
		can be replaced with homogeneous symmetric polynomials in $\alpha_1,\dots,\alpha_{n}$
		and $\beta_1,\dots,\beta_{n}$.
		It turns out that this rearranged basis is more convenient for deducing the differentials in the spectral sequence.
				
		Using the next two lemmas, we determine the correspondence between each $\sigma_l^\alpha$ and $\sigma_l^\beta$ and generators of the kernel of $\bar{d}^2$, which provides us with the image of the remaining differentials.
        For each $n \geq 1$, let $A=(a_1,\dots,a_n),B=(b_1,\dots,b_n)\in \mathbb{Z}^n_{\geq 0}$,
        such that not all of $a_1,\dots,a_n$ are zero.
        Denote by $L(a_1,\dots,a_n)$ the number of non-zero entries in $(a_1,\dots,a_n)$.
        Define the quotient of the permutation group on $n$ elements $\zeta_{(b_1,\dots,b_l)}^{(a_1,\dots,a_l)} \coloneqq S_n/\sim$ by
        \begin{equation*}
            \pi \sim \rho \iff (a_{\pi(1)},\dots,a_{\pi(n)})=(a_{\rho(1)},\dots,a_{\rho(n)})
            \text{ and } (b_{\pi(1)},\dots,b_{\pi(n)})=(b_{\rho(1)},\dots,b_{\rho(n)}).
        \end{equation*}
        Let $1\leq x(a_1,\dots,a_n) \leq n$ be the minimal integers such that
        \begin{equation}\label{eq:x}
            a_{x(a_1,\dots,a_n)} \geq 1.
        \end{equation}
        Notice that $x(a_{\pi(1)},\dots,a_{\pi(n)})=x(a_{\rho(1)},\dots,a_{\rho(n)})$ if $\pi \sim \rho$.
        Define element $s_{(b_1,\dots,b_n)}^{(a_1,\dots,a_n)}$ of $E_2^{2l-1,1}$ by
        \begin{equation*}
		    s_{(b_1,\dots,b_n)}^{(a_1,\dots,a_n)} \coloneqq
		    \sum_{\substack{\pi \in \zeta_{(b_1,\dots,b_n)}^{(a_1,\dots,a_n)}}}
		    {y'_{x(a_{\pi(1)},\dots,a_{\pi(n)})}v_{1}^{a_{\pi(1)}}\cdots v_{x(a_{\pi(1)},\dots,a_{\pi(n)})}^{a_{x(a_{\pi(1)},\dots,a_{\pi(n)})}-1}\cdots v_{n}^{a_{\pi(n)}} u_{1}^{b_{\pi(1)}}\cdots u_{n}^{b_{\pi(n)}}}.
	    \end{equation*}
			
	    \begin{lemma}\label{lem:SmallS}
	        With $s_{(b_1,\dots,b_n)}^{(a_1,\dots,a_n)}$ as given above,
	        \begin{align*}
	    	    \bar{d}^{2}&(s_{(b_1,\dots,b_n)}^{(a_1,\dots,a_n)})=
	    	    \\
		        &\sum_{\substack {\pi \in \zeta_{(b_1,\dots,b_l)}^{(a_1,\dots,a_l)} \\
		        0\leq t_j \leq a_j, \; 1\leq j \leq n}}
		        \Bigg{(}\prod_{1\leq k \leq n}{(-1)^{a_k-t_k}\binom{a_k}{t_k}}\Bigg{)}
		        \alpha_{1}^{t_{\pi(1)}}\cdots \alpha_{n}^{t_{\pi(n)}}
		        \beta_{1}^{b_{\pi(1)}+a_{\pi(1)}-t_{\pi(1)}}\cdots\beta_{n}^{b_{\pi(n)}+a_{\pi(n)}-t_{\pi(n)}}
		        .
	        \end{align*}
	    \end{lemma}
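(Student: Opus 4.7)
The plan is to compute $\bar{d}^{2}$ summand-by-summand using the Leibniz rule together with Lemma~\ref{lem:E^2_{*,1}d^2}, then to translate the result from the $(v,u)$-basis into the $(\alpha,\beta)$-basis via the binomial theorem. For a fixed $\pi \in \zeta_{(b_1,\dots,b_n)}^{(a_1,\dots,a_n)}$, write $x_{\pi} = x(a_{\pi(1)},\dots,a_{\pi(n)})$ and observe that the corresponding summand of $s_{(b_1,\dots,b_n)}^{(a_1,\dots,a_n)}$ has the form $y'_{x_\pi} \cdot M_\pi$, where $M_\pi$ is an even-degree monomial in the $v_i$ and $u_i$ with the exponent on $v_{x_\pi}$ short by exactly one compared to the symmetric pattern $v_k^{a_{\pi(k)}}$. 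Since $\bar{d}^{2}(v_i)=\bar{d}^{2}(u_i)=0$ while $\bar{d}^{2}(y'_{x_\pi})=v_{x_\pi}$, the Leibniz rule gives $\bar{d}^{2}(y'_{x_\pi}M_\pi) = v_{x_\pi} M_\pi$, and the extra $v_{x_\pi}$ precisely fills this deficit, so $v_{x_\pi} M_\pi = \prod_{k=1}^{n} v_{k}^{a_{\pi(k)}} u_{k}^{b_{\pi(k)}}$.

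Summing over $\pi$ produces the intermediate identity $\bar{d}^{2}(s_{(b_1,\dots,b_n)}^{(a_1,\dots,a_n)}) = \sum_{\pi \in \zeta} \prod_{k} v_{k}^{a_{\pi(k)}} u_{k}^{b_{\pi(k)}}$. I would then apply the binomial theorem to each factor using $v_k = \alpha_k - \beta_k$ and $u_k = \beta_k$, giving $v_{k}^{a_{\pi(k)}} u_{k}^{b_{\pi(k)}} = \sum_{t_k = 0}^{a_{\pi(k)}} (-1)^{a_{\pi(k)} - t_k} \binom{a_{\pi(k)}}{t_k} \alpha_{k}^{t_k} \beta_{k}^{a_{\pi(k)} + b_{\pi(k)} - t_k}$. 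To match the form stated in the lemma, I would perform the change of summation variables $t'_{j} = t_{\pi^{-1}(j)}$; the range $0 \leq t_k \leq a_{\pi(k)}$ becomes the $\pi$-independent range $0 \leq t'_j \leq a_j$, and the coefficient $\prod_k (-1)^{a_{\pi(k)} - t_k}\binom{a_{\pi(k)}}{t_k}$ rewrites as $\prod_j (-1)^{a_j - t'_j}\binom{a_j}{t'_j}$, while the $\alpha$ and $\beta$ exponents retain the permuted form $t'_{\pi(k)}$ and $b_{\pi(k)} + a_{\pi(k)} - t'_{\pi(k)}$ displayed in the lemma.

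The main obstacle is combinatorial bookkeeping rather than any genuine analytic difficulty. Well-definedness of the sum over $\zeta$ is automatic because the relation $\sim$ defining $\zeta$ preserves $x_\pi$ together with the full datum $(a_{\pi(k)}, b_{\pi(k)})$, so each equivalence class contributes a single unambiguous term both before and after the reindexing step. No Koszul signs intrude at any stage because every summand of $s_{(b_1,\dots,b_n)}^{(a_1,\dots,a_n)}$ carries exactly one odd-degree factor $y'_{x_\pi}$, which is consumed by $\bar{d}^{2}$, leaving only pairwise-commuting even-degree classes to be rearranged and expanded.
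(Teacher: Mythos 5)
Your argument is correct and is essentially the paper's own proof: both apply the Leibniz rule with $\bar{d}^2(y'_{x_\pi})=v_{x_\pi}$ and $\bar{d}^2(v_i)=\bar{d}^2(u_i)=0$ to get the intermediate identity $\bar{d}^2(s)=\sum_\pi\prod_k(\alpha_k-\beta_k)^{a_{\pi(k)}}\beta_k^{b_{\pi(k)}}$, then binomially expand and collect terms. Your explicit reindexing $t'_j=t_{\pi^{-1}(j)}$ just spells out what the paper compresses into ``collecting the terms,'' so nothing further is needed.
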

				
	    \begin{proof}
            Applying Lemma~\ref{lem:E^2_{*,1}d^2}, the Leibniz rule and the change of basis (\ref{eq:ChangeBasis}),
            \begin{align*}
               \bar{d}^{2}&(s_{(b_1,\dots,b_n)}^{(a_1,\dots,a_n)})= 
               \sum_{\substack{\pi \in \zeta_{(b_1,\dots,b_l)}^{(a_1,\dots,a_n)}}}
		       (\alpha_{1}-\beta_{1})^{a_{\pi(1)}} \cdots (\alpha_{n}-\beta_{n})^{a_{\pi(n)}}
		       \beta_{1}^{b_{\pi(1)}}\cdots \beta_{n}^{b_{\pi(n)}}.
            \end{align*}
            Using the binomial expansion on the terms $(\alpha_{i_j}-\beta_{i_j})^{a_j}$ for each $1\leq j \leq n$, we obtain
            \begin{align*}
	    	    \bar{d}^{2}(s_{(b_1,\dots,b_n)}^{(a_1,\dots,a_n)})= & \\
		        \sum_{\pi \in \zeta_{(b_1,\dots,b_n)}^{(a_1,\dots,a_n)}} &
		        \Bigg{(}
		        \bigg{(}\sum_{0\leq t\leq a_{\pi(1)}}{(-1)^{a_{\pi(1)}-t}\binom{a_{\pi(1)}}{t}\alpha_{1}^t\beta_{1}^{a_{\pi(1)}-t}}
		        \bigg{)}
		        \cdots \\ & \cdots
		        \bigg{(}\sum_{0\leq t\leq a_{\pi(n)}}{(-1)^{a_{\pi(n)}-t}\binom{a_{\pi(n)}}{t}\alpha_{n}^t\beta_{n}^{a_{\pi(n)}-t}}
		        \bigg{)}
		        \beta_{1}^{b_{\pi(1)}}\cdots\beta_{n}^{b_{\pi(n)}}
		        \Bigg{)}.
	        \end{align*}
	        The expression in the statement of the lemma now follows by collecting the terms
	        of the same product of $\alpha_{i_j}$'s and $\beta_{i_j}$'s.
        \end{proof}

       Using the notation above, define
        \begin{align}\label{eq:BigS}
            & S^{(c_1,\dots,c_l)} \coloneqq \nonumber \\
            & \sum_{\substack{ 0\leq t_j \leq c_j, \\ 1\leq j \leq n, \\ \text{some } t_j > 0 }}
            s_{(c_1-t_1,\dots,c_l-t_n)}^{(t_1,\dots,t_n)}
            \prod_{1 \leq i \leq n}
            \sum_{\substack{(a_1,\dots,a_{c_i}) \in \mathbb{Z}_{\geq 0} \\ |(a_1,\dots,a_{c_i})|=c_i-t_i, \\ a_{k-1} = 0 \implies a_{k} = 0, \\ 2 \leq k \leq c_i}}
            (-1)^{t_i+L(a_1,\dots,a_{c_i})}
            \binom{c_i}{a_1,\dots,a_{c_i}}.
        \end{align}
        The second differential when applied to $S^{(c_1,\dots,c_n)}$ gives an expression in terms
        of only products of $\alpha_{i_j}$'s or only $\beta_{i_j}$'s of type $(c_1,\dots,c_n)$.
        
        \begin{lemma}\label{lem:d2BigS}
            With $S^{(c_1,\dots,c_n)}$ given as above,
            \begin{equation*}
                \bar{d}^2(S^{(c_1,\dots,c_n)}) =
                \sum_{\pi \in \zeta_{(c_1,\dots,c_n)}^{(c_1,\dots,c_n)}}
                {\alpha_{1}^{c_{\pi(1)}}\cdots \alpha_{n}^{c_{\pi(n)}}}
                +(-1)^{c_1+\cdots+c_n+1}\sum_{\pi \in \zeta_{(b_1,\dots,b_n)}^{(a_1,\dots,a_n)}}
                {\beta_{1}^{c_{\pi(1)}}\cdots \beta_{n}^{c_{\pi(n)}}}.
            \end{equation*}
        \end{lemma}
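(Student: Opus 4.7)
The plan is to compute $\bar{d}^2$ of the defining expression~(\ref{eq:BigS}) for $S^{(c_1,\ldots,c_n)}$ directly by applying Lemma~\ref{lem:SmallS} term-by-term, and then to simplify the resulting quadruple sum using the combinatorial identities from Section~\ref{sec:Comintorial}. First, by linearity of $\bar{d}^2$ and Lemma~\ref{lem:SmallS} applied to each $s_{(c_1-t_1,\ldots,c_n-t_n)}^{(t_1,\ldots,t_n)}$, one obtains an expression indexed by the outer tuple $(t_1,\ldots,t_n)$ from the definition of $S^{(c_1,\ldots,c_n)}$, the inner compositions $(a_1^{(i)},\ldots,a_{c_i}^{(i)})$ for each $i$, the splitting indices $(s_1,\ldots,s_n)$ produced by Lemma~\ref{lem:SmallS}, and the permutation class $\pi$. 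Each summand is a signed product of multinomial and binomial coefficients times a monomial $\prod_k \alpha_k^{s_{\pi(k)}}\beta_k^{c_{\pi(k)}-s_{\pi(k)}}$.

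Next I would reorganize the sum by grouping terms whose $\alpha,\beta$ monomial coincides. Every monomial that appears has the form $\prod_k \alpha_k^{p_k}\beta_k^{q_k}$, where $(p_k+q_k)_{k=1}^n$ is a permutation of $(c_1,\ldots,c_n)$, and the argument splits into three cases: pure $\alpha$-monomials (all $q_k=0$), pure $\beta$-monomials (all $p_k=0$), and mixed monomials. For a fixed mixed monomial, the coefficient factorises as a product over each coordinate $i$. Using the identity~(\ref{eq:StirlingExpansion}) relating multinomial sums to Stirling numbers of the second kind together with Lemma~\ref{lem:StirlingIdentity}, the inner sum over compositions $(a_1^{(i)},\ldots,a_{c_i}^{(i)})$ from the definition of $S^{(c_1,\ldots,c_n)}$ collapses to a closed-form expression in binomials. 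The remaining sum over the splitting parameters $s_j$ can then be matched to the hypotheses of Lemma~\ref{lem:combino}, which forces the mixed coefficient to vanish.

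It then remains to identify the coefficient of each pure monomial. For a pure $\alpha$-monomial $\alpha_1^{c_{\pi(1)}}\cdots\alpha_n^{c_{\pi(n)}}$, only the outer term $t_j=c_j$ for all $j$ together with $s_j=t_j$ contributes, and direct evaluation of the multinomials $\binom{c_i}{a_1,\ldots,a_{c_i}}$ and binomials $\binom{t_k}{s_k}$ leaves the coefficient $1$. For pure $\beta$-monomials, only $s_j=0$ contributes, and the accumulation of the signs $(-1)^{t_i+L(a)}$ from~(\ref{eq:BigS}) with the signs $(-1)^{a_k-s_k}$ from Lemma~\ref{lem:SmallS} collapses (again via Lemma~\ref{lem:StirlingIdentity}) to the overall factor $(-1)^{c_1+\cdots+c_n+1}$. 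Summing over $\pi\in\zeta_{(c_1,\ldots,c_n)}^{(c_1,\ldots,c_n)}$ reproduces both sums in the statement.

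The main obstacle will be the step isolating the vanishing of the mixed-monomial coefficients. This is precisely the reason for the intricate combinatorial weights in the definition of $S^{(c_1,\ldots,c_n)}$: the alternating-sign multinomial sum is engineered so that Lemma~\ref{lem:StirlingIdentity} reduces it to a binomial sum matching Lemma~\ref{lem:combino}, and the book-keeping of these collapses through the quadruple index set is where care is required. Once the mixed terms are killed, the pure-$\alpha$ and pure-$\beta$ computations reduce to single-line evaluations.
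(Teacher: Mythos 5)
Your overall skeleton --- expand $\bar{d}^2(S^{(c_1,\dots,c_n)})$ termwise by Lemma~\ref{lem:SmallS}, sort the result by the monomial $\prod_k\alpha_k^{p_k}\beta_k^{q_k}$, and treat pure-$\alpha$, pure-$\beta$ and mixed monomials separately --- is exactly the paper's, and your pure-$\alpha$ and pure-$\beta$ evaluations agree with it. The gap is in the mechanism you propose for killing the mixed coefficients, which you yourself identify as the crux, and which does not work as described. First, the identity (\ref{eq:StirlingExpansion}) converts a sum of multinomial coefficients $\binom{n}{a_1,\dots,a_m}$ with $a_1+\cdots+a_m=n$ into Stirling numbers; but in the mixed case the inner sums in (\ref{eq:BigS}) run over compositions with $|(a_1,\dots,a_{c_i})|=c_i-t_i<c_i$, so the coefficients $\binom{c_i}{a_1,\dots,a_{c_i}}$ are not genuine multinomial coefficients and (\ref{eq:StirlingExpansion}) does not apply to them; the paper is explicit that the Stirling collapse via Lemma~\ref{lem:StirlingIdentity} is legitimate only in the final pure-$\beta$ step, where all $t_i=0$ and the parts do sum to $c_i$. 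Second, Lemma~\ref{lem:combino} is not the identity that forces the mixed coefficients to vanish: no multiset coefficients arise anywhere in this computation, and there is no visible way to match the residual binomial sums to its hypotheses.

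What actually does the work in the paper is different. For a fixed mixed exponent vector $(t'_1,\dots,t'_n)$ one collects the contributions from every $s^{(t_1,\dots,t_n)}_{(c_1-t_1,\dots,c_n-t_n)}$ with $(t_1,\dots,t_n)\geq(t'_1,\dots,t'_n)$, organised as an induction on $|(c_1,\dots,c_n)-(t'_1,\dots,t'_n)|$; the cross terms are rewritten by appending the extra part $t_i-t'_i$ to each composition using (\ref{eq:MultinomilExpansion}), which identifies them with the coefficients $K_{(t'_1,\dots,t'_n)}$ of (\ref{eq:CancelCoef}) up to a sign governed by $L$; and the final cancellation is a subset-counting argument whose evaluation is the truncated alternating sum of binomial coefficients from (\ref{eq:binom}) --- not a multiset identity and not a Stirling identity. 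You would need to supply an argument of this kind (or an equivalent per-coordinate factorisation reducing to $\sum_k(-1)^k\binom{m}{k}=0$, taking care of the constraint in (\ref{eq:BigS}) that some $t_j>0$, which obstructs a naive factorisation in the pure-$\beta$ case) before the mixed terms can be declared zero. As written, the central step of the proof is missing.
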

    
        \begin{proof}
            It follows from Lemma~\ref{lem:SmallS} that the only monomials in $\bar{d}^2(S^{(c_1,\dots,c_l)})$ are integer multiples of
            \begin{equation}\label{eq:SumTerms}
               \alpha_{1}^{t_{\pi(1)}}\cdots \alpha_{n}^{t_{\pi(n)}}
               \beta_{1}^{c_{\pi(1)}-t_{\pi(1)}}\cdots \beta_{n}^{c_{\pi(n)}-t_{\pi(n)}} 
            \end{equation}
            for some $(t_1,\dots,t_n)\leq (c_1,\dots,c_n)$
            and $\pi \in \zeta_{(c_1-t_1,\dots,c_n-t_n)}^{(t_1,\dots,t_n)}$.
            When $(t_1,\dots,t_n)=(c_1,\dots,c_n)$, then the monomials in (\ref{eq:SumTerms}) are
            \begin{equation*}
                \alpha_{1}^{c_{\pi(1)}}\cdots \alpha_{n}^{c_{\pi(n)}}
            \end{equation*}
            for each $\pi \in \zeta_{(0,\dots,0)}^{(c_1,\dots,c_n)}$.
            By Lemma~\ref{lem:SmallS}, these monomial terms only appear in the image of the differential $\bar{d}^2$
            of $s^{(c_1,\dots,c_n)}_{(0,\dots,0)}$ and appear with multiplicity one.
            Hence the statement
            of the lemma
            holds when restricted to such monomials.
            
            By induction on $l=|(c_1,\dots,c_n)-(t_1,\dots,t_n)|$ we now prove that the statement holds for all other terms containing monomials in~(\ref{eq:SumTerms}).
            Take $0\leq (t'_1,\dots,t'_n) < (c_1,\dots,c_n)$
            such that $|(c_1,\dots,c_n)-(t'_1,\dots,t'_n)|=l\geq 1$.
            
            Since the terms of monomials in (\ref{eq:SumTerms}) for $(t'_1,\dots,t'_n)$
            only occur in the image of the $\bar{d}^2$ differential of $s^{(t_1,\dots,t_n)}_{(c_1-t_1,\dots,c_n-t_n)}$
            for $  0 < (t_1,\dots,t_n) \leq (c_1,\dots,c_n)$ such that $|(c_1,\dots,c_n)-(t_1,\dots,t_n)|\leq l$,
            the multiplicity of $s^{(t_1,\dots,t_n)}_{(c_1-t_1,\dots,c_n-t_n)}$ in (\ref{eq:BigS}) for
            $|(c_1,\dots,c_n)-(t_1,\dots,t_n)| > l$ does not effect the multiplicity of the terms
            corresponding to monomials with $(t'_1,\dots,t'_n)$ in (\ref{eq:SumTerms}).
            Hence, we need only show that the sum of terms of monomials in (\ref{eq:SumTerms}) for
            $(t'_1,\dots,t'_n)\neq(0,\dots,0)$ arising from
            $|(c_1,\dots,c_n)-(t'_1,\dots,t'_n)| > l$ is the negative of the constant
            \begin{equation}\label{eq:CancelCoef}
                K_{(t'_1,\dots,t'_n)} =
                \prod_{1 \leq i \leq n}
                \sum_{\substack{(a_1,\dots,a_{c_i}) \in \mathbb{Z}_{\geq 0} \\ |(a_1,\dots,a_{c_i})|=c_i-t'_i, \\ a_{k-1} = 0 \implies a_{k} = 0, \\ 2 \leq k \leq c_i}}
                (-1)^{t'_i+L(a_1,\dots,a_{c_i})}\binom{c_i}{a_1,\dots,a_{c_i}}.
            \end{equation}
           The only remaining case is the terms of monomials from (\ref{eq:SumTerms}) when $(t'_1,\dots,t'_n)=(0,\dots,0)$,
           which requires the sum to be $(-1)^{c_1+\cdots+c_n+1}$.
           Returning first to the main cases, the sum we are interested in is the same for any
           $\pi \in \zeta_{(c_1-t'_1,\dots,c_n-t'_n)}^{(t'_1,\dots,t'_n)}$.
           So without loss of generality, we assume that $\pi$ is the identity.
           In this case using the formula from Lemma~\ref{lem:SmallS}, we consider
           \begin{equation*}
                \sum_{\substack{(t'_1,\dots,t'_n) < (t_1,\dots,t_n) \leq (c_1,\dots,c_n)}}
                {K_{t_1,\dots,t_n}}\prod_{1\leq j \leq n}{(-1)^{t_j-t'_j}\binom{t_j}{t_j-t'_j}}
            \end{equation*}
            where we have additionally used the fact that $\binom{t_j}{t'_j}=\binom{t_j}{t_j-t'_j}$,
            which using (\ref{eq:CancelCoef}) can be inductively rewritten as
            \begin{align}\label{eq:InductionExpression}
                \sum_{(t'_1,\dots,t'_n) < (t_1,\dots,t_n) \leq (c_1,\dots,c_n)}
                \prod_{1\leq j \leq n}{(-1)^{t_j-t'_j}\binom{t_j}{t_j-t'_j}}
                \\ \cdot
                \prod_{1 \leq i \leq n}
                \sum_{\substack{(a_1,\dots,a_{c_i}) \in \mathbb{Z}_{\geq 0} \\ |(a_1,\dots,a_{c_i})|=c_i-t_i, \\ a_{k-1} = 0 \implies a_{k} = 0, \\ 2 \leq k \leq c_i}}
                (-1)^{t_i+L(a_1,\dots,a_{c_i})}
                \binom{c_i}{a_1,\dots,a_{c_i}}. \nonumber
            \end{align}
            Since $c_i-a_1-\cdots -a_{l_i}=t_i$, using equation~(\ref{eq:MultinomilExpansion}) we see that multiplying ${\binom{t_j}{t_j-t'_j}}$ by $\binom{c_i}{a_1,\cdots,a_{c_i}}$ in (\ref{eq:InductionExpression}) when $i=j$ is the same as extending each
            $a_1,\dots,a_{c_i}$ to $a_1,\dots,a_{c_i},t_j-t'_j$.
            Which means we can rewrite (\ref{eq:InductionExpression}) as
            \begin{align}\label{eq:InductionExpression2}
                \sum_{(t'_1,\dots,t'_n) < (t_1,\dots,t_n) \leq (c_1,\dots,c_n)}
                \prod_{1 \leq i \leq n}
                \sum_{\substack{(a_1,\dots,a_{c_i}) \in \mathbb{Z}_{\geq 0} \\ |(a_1,\dots,a_{c_i})|=c_i-t_i, \\ a_{k-1} = 0 \implies a_{k} = 0, \\ 2 \leq k \leq c_i}}
                (-1)^{t'_i+L(a_1,\dots,a_{c_i})}
                \binom{c_i}{a_1,\dots,a_{c_i}, t_i-t'_i}
            \end{align}
            where we also take the signs into account noting that
            \begin{equation*}
            (-1)^{t_i-t'_i}(-1)^{t'_i+L(a_1,\dots,a_{c_i})}
            =(-1)^{t'_i+L(a_1,\dots,a_{c_i})}.
            \end{equation*}
            Notice also that as $|(a_1,\dots,a_{l_i},t_i-t_i',0,\dots,0)|=c_i-t'_i$, once $t_i-t'_i$ is moved to the left of $a_k = 0$, the multinomial type coefficient terms appearing in (\ref{eq:InductionExpression2}) are the same as those appearing in
            (\ref{eq:CancelCoef}).
            It remains to check that the multiplicity of the sum of these coefficients in (\ref{eq:InductionExpression2}) agrees with (\ref{eq:CancelCoef}).
            
            Each multinomial type coefficient in (\ref{eq:CancelCoef}) is determined by a choice of sequence $(a_1,\dots,a_{c_i})$ for $1\leq i \leq n$.
            Once the product is expanded, the multinomial type coefficient terms in (\ref{eq:InductionExpression}) are the product of $n$ coefficients corresponding to $n$ sequences
            \begin{equation*}
                (a_1,\dots,a_{c_i})=(a_1,\dots,a_{l_i},0,\dots,0),
            \end{equation*}
            for each $1\leq i \leq n$ and some $1\leq l_i\leq c_i$.
            The coefficient product terms in (\ref{eq:InductionExpression2}) of the same form correspond to $n$ sequences of the form
            \begin{equation}\label{eq:SubSequences}
               (a_1,\dots,a_{l_i},0,\dots,0) \text{ or } (a_1,\dots,a_{l_i-1},0,\dots,0)
            \end{equation}
            for $1 \leq i \leq n$, the second case corresponding to extending by $t_i-t'_i>0$ and the first to extending by $t_i-t'_i=0$.
            Note that for at least some $i$ the second case must be chosen as $(t'_1,\dots,t'_n)<(t_1,\dots,t_n)$.
            However, as we fix the order of the $n$ sequences all other possible combinations of choices occur exactly once.
            
            The sign of each of the terms from (\ref{eq:CancelCoef}) inside the product is the product of $(-1)^{t'_i+L(a_1,\dots,a_{c_i})}=t'_i+l_i$ changes from those in (\ref{eq:InductionExpression2}) 
            each time the first choice in (\ref{eq:SubSequences}) is taken as for non-zero $t_j-t'_j$, $L(a_1,\dots,a_{l_i}-1,t_j-t'_j,0,\dots,0) = L(a_1,\dots,a_{l_i}-1,0,\dots,0)+1$.
            The number of ways to obtain a unique product of coefficients from (\ref{eq:CancelCoef}) in (\ref{eq:InductionExpression2}) is exactly all choices possible in (\ref{eq:SubSequences}).
            This choice corresponds to picking subsets of an $n$ set that is not the entire set.
            As this sign depends on the size of the chosen subset, the total sum of these terms is the alternating sum of the $n^{\text{th}}$ binomial coefficients (\ref{eq:binom}),
            without the first term in the sum.
            The result is therefore a single term of multiplicity $1$ up to sign.
            This sign is the opposite of that of the first missing term from the alternating sum of binomial coefficients, which corresponds to the term in (\ref{eq:CancelCoef}) as required.
            
           Finally it remains to consider the case when $(t'_1,\dots,t'_n)=(0,\dots,0)$.
           Using the result of the previous part of the proof, substituting $t'_i=0$ in (\ref{eq:CancelCoef}), this will be the negative of 
           \begin{equation*}
                \prod_{1 \leq i \leq n}
                \sum_{\substack{(a_1,\dots,a_{c_i}) \in \mathbb{Z}_{\geq 0} \\ |(a_1,\dots,a_{c_i})|=c_i, \\ a_{k-1} = 0 \implies a_{k} = 0, \\ 2 \leq k \leq c_i}}
                (-1)^{L(a_1,\dots,a_{c_i})}
                \binom{c_i}{a_1,\dots,a_{c_i}}
                =(*).
           \end{equation*}
           Since now $a_1+\dots+a_{c_i}=c_i$, the coefficients $\binom{c_i}{a_1,\dots,a_{c_i}}$ are genuine multinomial coefficients.
           Hence, by collecting all sequence with the constant $L(a_1,\dots,a_{c_i})=j$ using equation~(\ref{eq:StirlingExpansion}) then Lemma~\ref{lem:StirlingIdentity} to rewrite
           the equation above as
           \begin{equation*}
               (*) =
               \prod_{1\leq i \leq n}\sum_{1\leq j \leq c_i}(-1)^j j!\stirling{c_i}{j}
               =
               \prod_{1\leq i \leq n}(-1)^{c_i} = (-1)^{c_1+\cdots+c_n}
           \end{equation*}
           which completes the proof.
        \end{proof}
        
        We can now describe explicitly the image of $\bar{d}^2$ in the spectral sequence.
         
        \begin{theorem}\label{thm:d^2Image}
            The only non-zero differentials on the generators $x'_{2l}$ for $2\leq l\leq n+1$ are given by
            \begin{equation*}
                \bar{d}^{2(l-1)}(x'_{2(l-1)})
                =\sum_{\substack{|(c_1,\dots,c_{l'})|=l}}{S^{(c_1,\dots,c_l)}}.
            \end{equation*}
        \end{theorem}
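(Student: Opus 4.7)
The plan is to use Remark~\ref{rmk:unique} to reduce the problem to identifying a single, explicit cocycle representative. By that remark, for each $i$ the only potentially nonzero differential on $x'_{2i}$ is $\bar{d}^{2i} \colon \bar{E}_{2i}^{0,2i} \to \bar{E}_{2i}^{2i,1}$, and its target is the unique class in $\bar{E}_{2i}^{2i,1}$ that lies in $\ker \bar{d}^2$ but is not in the cumulative image of earlier differentials $\bar{d}^{2j}$ for $j<i$. Taking $i = l-1$, the proof therefore reduces to verifying three things about the element $T_l := \sum_{c_1+\cdots+c_n = l} S^{(c_1,\ldots,c_n)}$: (a) it has the correct bidegree $(2(l-1),1)$; (b) it lies in $\ker \bar{d}^2$; and (c) it is not already in the image of lower differentials, together with the fact that $x'_{2(l-1)}$ must be killed on the $\bar{E}_{2(l-1)}$-page by convergence of the spectral sequence.

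Checking the bidegree in (a) is routine: each $s^{(t_1,\ldots,t_n)}_{(c_1-t_1,\ldots,c_n-t_n)}$ appearing in $S^{(c_1,\ldots,c_n)}$ contributes exactly one generator $y'_\bullet$ of fiber degree $1$, and the remaining $v$- and $u$-factors have total even degree $2l-2$ in the base, matching $\bar{E}_{2(l-1)}^{2(l-1),1}$. For (b), I would apply Lemma~\ref{lem:d2BigS} termwise. For each composition with $c_1+\cdots+c_n=l$, the lemma produces precisely the sum of $\alpha$-monomials over the orbit $\zeta_{(c_1,\ldots,c_n)}^{(c_1,\ldots,c_n)}$ plus $(-1)^{l+1}$ times the corresponding sum of $\beta$-monomials. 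Summing over all compositions $(c_1,\ldots,c_n)$ of $l$ collects these monomial symmetric polynomials into the complete homogeneous symmetric polynomial $h_l(\alpha_1,\ldots,\alpha_n) \pm h_l(\beta_1,\ldots,\beta_n)$. By Remark~\ref{remk:SymQotForms} applied to each tensor factor of $H^*(SU(n+1)/T^n \times SU(n+1)/T^n;\mathbb{Z})$, both $h_l(\alpha_1,\ldots,\alpha_n)$ and $h_l(\beta_1,\ldots,\beta_n)$ vanish (this is the main payoff of having switched to complete homogeneous symmetric generators in Section~\ref{sec:IdeaForms}), so $\bar{d}^2(T_l) = 0$.

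For (c), I would argue inductively on $l$. By Lemma~\ref{lem:E^2_{*,1}d^2} and the inductive construction described in Remark~\ref{rmk:unique}, a class in $\bar{E}_{2(l-1)}^{2(l-1),1}$ can only be killed by an earlier differential $\bar{d}^{2j}$ with $j<l-1$ if it can be written as a product $x'_{2j'}$-free combination of the previously-determined differentials of $x'_2,\ldots,x'_{2(l-2)}$. Because the monomial content of $T_l$ contains terms involving a $y'_k$ multiplied by exactly $l-1$ total $v$-factors spread across all indices (rather than being expressible as shorter products of earlier cocycles), no such combination produces $T_l$; the monomial symmetric polynomial expansion of $h_l$ on the nose contains the ``all $v$-indices'' diagonal term $y'_k v_k^{l-1}$ (up to sign) that no earlier differential can already cover. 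Combined with convergence of the spectral sequence to the known finite-dimensional cohomology $H^*(Map(I,SU(n+1)/T^n);\mathbb{Z}) \cong H^*(SU(n+1)/T^n;\mathbb{Z})$, which forces $x'_{2(l-1)}$ not to survive, uniqueness in Remark~\ref{rmk:unique} then identifies $\bar{d}^{2(l-1)}(x'_{2(l-1)})$ with $T_l$.

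The main obstacle will be step (b): even though Lemma~\ref{lem:d2BigS} does the per-composition work, one must verify that the signs arranged inside the definition of $S^{(c_1,\ldots,c_n)}$ line up with the signs that appear when one assembles the monomial symmetric polynomials into $h_l$, and that the $(-1)^{c_1+\cdots+c_n+1}$ from the lemma is compatible with the single global sign picked up from the $\beta$-side. A secondary obstacle is step (c), where rigorously ruling out that $T_l$ lies in the image of differentials from earlier $x'_{2j}$'s requires a careful filtration argument on the $y'$-linear component, most efficiently phrased by reading off the unique $y'_k v_k^{l-1}$-coefficient on both sides and noting it was not produced by any $\bar{d}^{2j}(x'_{2j})$ for $j<l-1$.
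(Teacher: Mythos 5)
Your proposal follows the paper's proof essentially step for step: the paper likewise applies Lemma~\ref{lem:d2BigS} to identify $\bar{d}^2$ of the right-hand side with $h_l(\alpha)\pm h_l(\beta)$, which lies in the symmetric ideal and hence vanishes on the $\bar{E}_2$-page, and then invokes the uniqueness statement of Remark~\ref{rmk:unique} to conclude that this cocycle must be $\bar{d}^{2(l-1)}(x'_{2(l-1)})$. Your step (c) merely spells out the ``not in the image of earlier differentials'' point in more detail than the paper's one-line justification, so the approach is the same.
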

        
        \begin{proof}
            Applying Lemma~\ref{lem:d2BigS}, the image under $\bar{d}^2$ of the right hand side is the difference of complete homogeneous symmetric functions in $l$ variables with $\alpha$'s and $\beta$'s respectively.
            By Remark~\ref{rmk:unique}, since no expression in only $\alpha$'s or $\beta$'s lies in the image of $\bar{d}^2$, the image of the difference of the homogeneous symmetric polynomials in $\alpha$'s and $\beta$'s must generate the image of $x'_{2l}$ under $\bar{d}^{2(l-1)}$.
        \end{proof}
        
    \subsection{Differentials for the free loop spectral sequence}\label{sec:diff}
			
			Next we consider the map $\phi$ of the evaluation fibration of $SU(n+1)/T^n$ for $n \geq 1$ 
			and the diagonal fibration studied in Section~\ref{sec:evalSS}
			given by the following commutative diagram
			\begin{equation*}\label{fig:fibcdSU}
				\xymatrix{
					{\Omega(SU(n+1)/T^n)} \ar[r]^(.5){} \ar[d]^(.45){id} & {\Lambda(SU(n+1)/T^n)} \ar[r]^{eval} \ar[d]^(.45){exp}  & {SU(n+1)/T^n} \ar[d]^(.45){\Delta} \\
					{\Omega(SU(n+1)/T^n)} \ar[r]^(.45){}   							 & {Map(I,SU(n+1)/T^n)} 	 \ar[r]^(.425){eval}  					 & {SU(n+1)/T^n\times SU(n+1)/T^n}}
			\end{equation*}
			where $\exp$ is given by $\exp(\alpha)(t)=\alpha(e^{2\pi i t})$.
			As $SU(n+1)/T^{n}$ is simply connected, the evaluation fibration induces a cohomology Leray-Serre spectral sequence $\{ E_r,d^r \}$.
			Hence $\phi$ induces a map of spectral sequences $\phi^*:\{ \bar{E}_r,\bar{d}^r \} \to \{ E_r,d^r \}$. 
			More precisely for each $r\geq 2$ and $a,b \in \mathbb{Z}$, there is a commutative diagram
			\begin{equation}\label{fig:phicd}
				\xymatrix{
				    {\bar{E}_r^{a,b}} \ar[r]^(.4){d^r} \ar[d]^(.46){\phi^*} & {\bar{E}_r^{a+r,b-r+1}} \ar[d]^(.46){\phi^*}  \\
					{E_r^{a,b}} \ar[r]^(.4){\bar{d}^r} & {E_r^{a+r,b-r+1}}}
			\end{equation}
			where $\phi^*$ for each successive $r$ is the induced map on the homology of the previous page, beginning as the map induced on the tensor on the $\bar{E}_2$-pages
			by the maps $id \colon \Omega(SU(n+1)/T^n)\to\Omega(SU(n+1)/T^n)$ and $\Delta \colon SU(n+1)/T^n \to SU(n+1)/T^n \times SU(n+1)/T^n$.
			For the remainder of the section we use the notation
			\begin{center}
			$H^*(\Omega(SU(n+1)/T^n);\mathbb{Z})\cong \Gamma_{\mathbb{Z}}(x_2,x_4,\dots,x_{2n})\otimes\Lambda_{\mathbb{Z}}(y_1,\dots,y_{n})$
			\end{center}
			\begin{center}
			and $\;\;\; H^*(SU(n+1)/T^n;\mathbb{Z})\cong \frac{\mathbb{Z}[\gamma_1,\dots,\gamma_{n+1}]}{\langle\sigma^\gamma_1,\dots,\sigma_{n+1}^\gamma\rangle}$,
			\end{center}
			where $|y_i|=1, |\gamma_j|=2, |x_{2i}|=2i$ for each $1\leq i\leq n,1\leq j\leq n+1$ and $\sigma^\gamma_1,\dots,\sigma_{n+1}^\gamma$
			are the basis of elementary symmetric functions on $\gamma_i$.
			We now determine all the differentials in $\{ E_r,d^r \}$.	
			
			\begin{theorem}\label{thm:allDiff}
			For each integer $n\geq 1$ and for $2 \leq l \leq n+1$, the only non-zero differentials on generators of the $E_2$-page of $\{ E_r,d^r \}$ are up to class representative and sign are given by
				\begin{equation*}
					d^{2(l-1)}(x_{2(l-1)}) =
		        	\sum_{\substack{|(c_1,\dots,c_{n})|=l, \\ 1\leq j \leq n, \; c_j \geq 1}}
					{c_jy_{j}\gamma_1^{c_1}\cdots\gamma_j^{c_j-1}\cdots\gamma_n^{c_{n}}}.
				\end{equation*}
			\end{theorem}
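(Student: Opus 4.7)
The plan is to transfer the differentials from $\{\bar{E}_r,\bar{d}^r\}$ computed in Theorem~\ref{thm:d^2Image} across the induced map of spectral sequences $\phi^{*}\colon\{\bar{E}_r,\bar{d}^r\}\to\{E_r,d^r\}$ arising from the commutative square at the start of Section~\ref{sec:diff}. Since $\phi$ is the identity on fibers, $\phi^*(x'_{2k})=x_{2k}$ and $\phi^*(y'_i)=y_i$; since $\phi^*$ on the base is induced by the diagonal $\Delta$ which sends both $\alpha_i$ and $\beta_i$ to $\gamma_i$, we have $\phi^*(v_i)=0$ and $\phi^*(u_i)=\gamma_i$.

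By naturality in the square~(\ref{fig:phicd}),
\begin{equation*}
d^{2(l-1)}(x_{2(l-1)})=\phi^*\bigl(\bar{d}^{2(l-1)}(x'_{2(l-1)})\bigr)=\sum_{|(c_1,\dots,c_n)|=l}\phi^{*}\bigl(S^{(c_1,\dots,c_n)}\bigr).
\end{equation*}
Because $\phi^*$ annihilates every $v_i$, the only summands of $S^{(c_1,\dots,c_n)}$ that contribute are those whose monomial expansion has no $v$-factor. Inspection of the definition of $s^{(a_1,\dots,a_n)}_{(b_1,\dots,b_n)}$, whose monomials contain the factor $v_1^{a_{\pi(1)}}\cdots v_x^{a_{\pi(x)}-1}\cdots v_n^{a_{\pi(n)}}$, shows this forces the multi-index $(t_1,\dots,t_n)$ in~(\ref{eq:BigS}) to have a single entry $t_j=1$ with $c_j\ge 1$ and the rest zero.

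For such a contributing term, $\phi^{*}\bigl(s^{(0,\dots,1_j,\dots,0)}_{(c_1,\dots,c_j-1,\dots,c_n)}\bigr)$ expands to the symmetric sum $\sum_\pi y_{\pi^{-1}(j)}\gamma_1^{b_{\pi(1)}}\cdots\gamma_n^{b_{\pi(n)}}$ with $b=(c_1,\dots,c_j-1,\dots,c_n)$, which after relabelling contributes multiples of $y_j\gamma_1^{c_1}\cdots\gamma_j^{c_j-1}\cdots\gamma_n^{c_n}$. The scalar coefficient from~(\ref{eq:BigS}) factorises across $i$: for $i\ne j$ it reduces, via~(\ref{eq:StirlingExpansion}) and Lemma~\ref{lem:StirlingIdentity}, to $\sum_{m\ge 1}(-1)^m m!\stirling{c_i}{m}=(-1)^{c_i}$; for $i=j$, the identity $\binom{c_j}{a_1,\dots,a_m}=c_j\binom{c_j-1}{a_1,\dots,a_m}$ valid when $a_1+\cdots+a_m=c_j-1$ gives $-c_j\sum_{m\ge 1}(-1)^m m!\stirling{c_j-1}{m}=c_j(-1)^{c_j}$. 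Multiplying across $i$ produces the overall scalar $(-1)^l c_j$ in front of the expected monomial, yielding the formula in the theorem up to an overall sign of $(-1)^l$.

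To conclude that these are the only non-zero differentials on generators, one transplants the inductive dimensional argument of Remark~\ref{rmk:unique} to $\{E_r,d^r\}$: the generators $x_{2k}$ occupy the same bi-degrees, the classes $\gamma_i$ (the images of $u_i$) must survive for dimensional reasons, and the target of $d^{2(l-1)}$ on $x_{2(l-1)}$ is again a unique class not yet in the image of an earlier differential. The main obstacle is the combinatorial coefficient computation, which requires careful tracking of how compositions of $c_j-1$ (rather than $c_j$) enter~(\ref{eq:BigS}) and two applications of Lemma~\ref{lem:StirlingIdentity} to collapse the Stirling-number sums to the clean scalar $c_j$.
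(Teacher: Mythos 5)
Your proposal is correct and follows essentially the same route as the paper: naturality of $\phi^*$ applied to Theorem~\ref{thm:d^2Image}, the observation that $\phi^*(v_i)=0$ isolates exactly the terms of $S^{(c_1,\dots,c_n)}$ with $(t_1,\dots,t_n)$ a unit vector, and the same two Stirling-number reductions via equation~(\ref{eq:StirlingExpansion}) and Lemma~\ref{lem:StirlingIdentity}, giving $(-1)^{c_i}$ for $i\neq j$ and $c_j(-1)^{c_j}$ for $i=j$, hence the uniform scalar $(-1)^{l}c_j$ absorbed into the choice of sign. The concluding appeal to Remark~\ref{rmk:unique} together with $d^2(y_i)=\phi^*(v_i)=0$ matches the paper's argument for why no other generators support non-zero differentials.
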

            
            \begin{proof}
                The identity $id \colon \Omega(SU(n+1)/T^n)\to\Omega(SU(n+1)/T^n)$ induces the identity map on cohomology.
				The diagonal map $\Delta \colon SU(n+1)/T^n \to SU(n+1)/T^n \times SU(n+1)/T^n$ induces the cup product on cohomology.
				Hence by choosing appropriate generators of $\{ E_r,d^r \}$, we assume that
				\begin{center}
				    $\phi^*(y'_i)=y_i,\;\;\phi^*(x'_i)=x_i\;\;$and$\;\;\phi^*(\alpha_i)=\phi^*(\beta_i)=\phi^*(u_i)=\gamma_i,\;\;$so$\;\;\phi^*(v_i)=0$.
				\end{center}
				For dimensional reasons, the only possibly non-zero differential on generators ${y}_i$ in $\{ E_r,d^r \}$ is $d^{2}$.
				However for each $1\leq i\leq n$ using commutative diagram (\ref{fig:phicd}) and Lemma~\ref{lem:E^2_{*,1}d^2}, we have
				\begin{center}
					$d^2(y_i)=d^2(\phi^*(y'_i))=\phi^*(\bar{d}^2(y'_i))=\phi^*(v_i)=0$.
				\end{center}
				Additionally all differentials on generators $\gamma_i$, for each $1\leq i \leq n+1$, are zero for dimensional reasons.
				Hence all elements of $E_2^{*,1}$ and $E_2^{*,0}$ survive to the $E_{\infty}$-page unless they are in the image of some differential
				$d^r$ for $r\geq 2$.
				Using commutative diagram (\ref{fig:phicd}), we have up to class representative and sign
				\begin{equation*}
					d^{2(l-1)}(x_{2(l-1)})=\phi^*(\bar{d}^{2(l-1)}(x'_{2(l-1)})).
				\end{equation*}
				Since $\phi^*(v_i)=0$, an expression for $\phi^*(\bar{d}^{2(l-1)}(x'_{2(l-1)}))$ is obtained form the terms in the expression of Theorem~\ref{thm:d^2Image} that do not contain $v_i$.
                Therefore
                \begin{align}\label{eq:ImediateDiff}
					\nonumber
					&d^{2(l-1)}(x_{2(l-1)}) = \\
		        	&\sum_{\substack{|(c_1,\dots,c_{n})|=l, \\ 1\leq j \leq n, \; c_j \geq 1}}
					{y_{j}\gamma_1^{c_1}\cdots\gamma_j^{c_j-1}\cdots\gamma_n^{c_{n}}}
					\prod_{1 \leq i \leq n}
                    \sum_{\substack{(a_1,\dots,a_{c_i}) \in \mathbb{Z}_{\geq 0} \\ |(a_1,\dots,a_{c_i})|=c_i-\mathbb{I}_{i=j}, \\ a_{k-1} = 0 \implies a_{k} = 0, \\ 2 \leq k \leq c_i}}
                    (-1)^{\mathbb{I}_{i=j}+L(a_1,\dots,a_{c_i})}
                    \binom{c_i}{a_1,\dots,a_{c_i}}
				\end{align}
				where $\mathbb{I}_{i=j}$ is the indicator function equal if $1$ if $i=j$ and $0$ otherwise.
				Notice that by applying (\ref{eq:StirlingExpansion}) to $(a_1,\dots,a_{c_i})$ with the same value of $L(a_1,\dots,a_{c_i})$ and then using Lemma~\ref{lem:StirlingIdentity}, when $i \neq j$, we have 
				\begin{equation*}
			        \sum_{\substack{(a_1,\dots,a_{c_i}) \in \mathbb{Z}_{\geq 0} \\ |(a_1,\dots,a_{c_i})|=c_i \\ a_{k-1} = 0 \implies a_{k} = 0, \\ 2 \leq k \leq c_i}}
                    (-1)^{L(a_1,\dots,a_{c_i})}
                    \binom{c_i}{a_1,\dots,a_{c_i}}
                    =
                    \sum_{1\leq L \leq c_i}(-1)^L\stirling{c_i}{L}
                    =
                    (-1)^{c_i}
                    .
				\end{equation*}
				Additionally, when $i=j$ and using (\ref{eq:MultinomilExpansion}) we have
				\begin{align*}
				    &\sum_{\substack{(a_1,\dots,a_{c_i}) \in \mathbb{Z}_{\geq 0} \\ |(a_1,\dots,a_{c_i})|=c_i-1 \\ a_{k-1} = 0 \implies a_{k} = 0, \\ 2 \leq k \leq c_i}}
                    (-1)^{1+L(a_1,\dots,a_{c_i})}
                    \binom{c_i}{a_1,\dots,a_{c_i}}
                    \\
                    =&
                    \sum_{\substack{(a_1,\dots,a_{c_i}) \in \mathbb{Z}_{\geq 0} \\ |(a_1,\dots,a_{c_i})|=c_i-1 \\ a_{k-1} = 0 \implies a_{k} = 0, \\ 2 \leq k \leq c_i}}
                    (-1)^{1+L(a_1,\dots,a_{c_i})}
                    \binom{c_i}{1}
                    \binom{c_i-1}{a_1,\dots,a_{c_i}}
                    \\ =&
                    c_i\sum_{1\leq L \leq c_i-1}(-1)^{L+1} \stirling{c_i-1}{L}
                    =
                    c_i(-1)^{c_i}
				\end{align*}
				which together reduce (\ref{eq:ImediateDiff}) to 
				\begin{equation*}
					d^{2(l-1)}(x_{2(l-1)}) =
		        	\sum_{\substack{|(c_1,\dots,c_{n})|=l, \\ 1\leq j \leq n, \; c_j \geq 1}}
					{(-1)^{\lceil \frac{n}{2} \rceil}c_jy_{j}\gamma_1^{c_1}\cdots\gamma_j^{c_j-1}\cdots\gamma_n^{c_{n}}}.
				\end{equation*}
				Since all terms have the same sign we choose the positive generator, obtaining the formula in the statement of the theorem.
            \end{proof}
        
        \begin{remark}\label{rmk:DifOnDivPoly}
            By applying $\phi^*$ to equation \eqref{eq:DifOnDivPoly} we obtain additionally for each $m\geq 2$ that
            \begin{equation*}
                d^{2(l-1)}((x_{2(l-1)}))
                =
                d^{2(l-1)}(x_{2(l-1)})(x_{2(l-1)})_{m-1}.
            \end{equation*}
        \end{remark}
        
        \section{Spectral sequence computations with Gr{\"o}bner basis}\label{sec:SpectralGrobner}
		
		The following proposition describes the application of Gr\"{o}bner basis to spectral sequences making use of the theory in Section~\ref{sec:Grobner}, motivates Theorem~\ref{thm:Ideals} and will be heavy used in Section~\ref{sec:LSU4/T3} of the paper.
		A library of code for preforming computations outlined in the proposition can be found at \cite{Burfitt2021}.
	    The proposition applies straightforwardly to the Leray-Serre spectral sequence and is stated here for cohomology spectral sequences of algebras though could be reformulated through duality for a homology spectral sequence of algebras, swapping the base space with the fibre and rows for columns. 

	    \begin{proposition}\label{thm:SpectralGrobner}
	        Let $\{ E_r, d^r \}$ be a first quadrant spectral sequence of algebras in coefficients $R$
	        for which the $E^2$-page is of the form $E_2^{p,q} = E_2^{p,0}\otimes E_2^{0,q}$.
	        Suppose also that $E_2^{*,0}$ can be expressed as the quotient of a polynomial algebra
	        \begin{equation*}
	            A = \frac{R[x_1,x_2,\dots,x_n]}{I}
	        \end{equation*}
	        for some ideal $I$ in $R[x_1,x_2,\dots,x_n]$.
	        For some $a\geq 0$,
	        let $S_a$ be the additive generators of $E_r^{0,a}\setminus\{x_1,x_2,\dots,x_n\}$.
	        The computation within each row may be broken down as follows.
	        The kernel of $d^r$ in $A$ on row $E_{r}^{*,a}$ is generated by
	        \begin{enumerate}
    	        \item
    	            the kernel of the map 
    	            \begin{equation*}
    	                \phi_a^r \colon R[S_{a},x_1,x_2,\dots,x_n] \to R[S_{a-r+1},x_1,x_2,\dots,x_n]
    	            \end{equation*}
    	            induced by $d^r$ and
    	        \item
    	            the pre-image under $d^r$ of generators of the intersection
    	            \begin{equation*}
    	                \im(\phi_a^r) \cap S_{a-r+1}I
    	            \end{equation*}
    	            where $S_{a}I= \{ si \; | \; s\in S_{a-r+1}, i \in I  \}$.
    	            Treating $S_{a-r+1}I$ as an ideal generated by elements $si$ such that $s\in S_{a-r+1}$ and $i$ is a generator of $I$, the intersection can be computed by Gr\"{o}bner basis as an intersection of ideals
    	            noting that any element that increases the degree of the polynomial element of $S_{a-r+1}$ is no longer on row $a$, hence is ignored as it is justified below.
	        \end{enumerate}
	        The algebra structure on the $E_{r+1}$-page is generated by
	        the union of row generators given above and $S_{a}I \cap E_r^{a,*}$ for each $a\geq 0$.
            The relations for the algebra structure on $E_{r+1}$-page are given by the union
            of $I,\: \im(d^b)$ for $2\leq b \leq r$ and relations of the first column $E_{2}^{*,0}$ intersected with the generators of the $E_{r+1}$-page algebra.

	        In addition when $R=\mathbb{Z}$, the type of torsion present in each row $E_r^{a,*}$ and their generating relations can be obtained from the reduced Gr\"{o}bner basis of the ideal
	        \begin{equation}\label{eq:GrobnerTorsionPart}
	            \langle I\cap E_r^{a,*},\: \im(d^2)\cap E_r^{a,*},\: \im(d^3)\cap E_r^{a,*}, \dots,\: \im(d^r)\cap E_r^{a,*} \rangle
	        \end{equation}
	        as the non-unit greatest common divisor of the absolute values of the coefficients of each element of the reduced Gr\"{o}bner basis.
	        In this case the coefficients greatest common divisors describe all possible torsion types present and their multiplicities.
	    \end{proposition}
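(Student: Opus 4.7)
The plan is to lift all spectral sequence computations to the ambient polynomial ring $R[S_a, x_1, \dots, x_n]$ and translate each spectral sequence condition into an ideal-theoretic statement amenable to the Gr\"obner basis machinery summarised in Section~\ref{sec:Grobner}. An element of $E_r^{*,a}$ is represented by a polynomial in this ring, and being a $d^r$-cocycle becomes the condition that its image under $\phi_a^r$ lies in the ideal representing zero on row $a-r+1$ of the $E_r$-page.

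For the kernel description I will analyse when $\phi_a^r(f)$ vanishes in $E_r^{*,a-r+1}$. The tensor product form $E_2^{p,q} = E_2^{p,0}\otimes E_2^{0,q}$ implies that the relations inherited on row $a-r+1$ from the base algebra $A$ are generated by products $s\cdot i$ for $s\in S_{a-r+1}$ and $i$ a generator of $I$, that is by $S_{a-r+1}I$. The vanishing condition therefore splits cleanly into $\phi_a^r(f)=0$ (case (1)) and $\phi_a^r(f)$ a nonzero element of $S_{a-r+1}I$ (case (2)), and the intersection $\im(\phi_a^r)\cap S_{a-r+1}I$ is computed by the standard construction of Remark~\ref{rmk:intersectionGrobner}. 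The caveat concerning degree increases in the $S_{a-r+1}$ variables is justified by the bigrading: any element whose $S_{a-r+1}$-degree exceeds one contributes to the kernel computation on a different row and is recovered by running the procedure there separately.

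The algebra structure on the $E_{r+1}$-page then follows by taking the bigraded homology of $d^r$. The generating set is the union of the row generators given above and elements of $S_aI$ restricted to row $a$, since such elements vanish in $A$ but can carry nontrivial cohomology once combined via the Leibniz rule with classes in $S_a$ and must be included to preserve the full algebra structure after quotienting by $\im(d^r)$. The defining relations are exactly $I$, the successive images $\im(d^b)$ for $2\le b\le r$, and the first-column relations, each intersected with the new generating set; routine verification that no further relations are forced by compatibility of $d^r$ with the product completes this step.

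The main obstacle will be the torsion statement over $\mathbb{Z}$. The plan is to use the uniqueness of the reduced Gr\"obner basis together with Theorem~\ref{thm:GrobnerOver}, which guarantees that every element $\mathbb{Z}$-reduces to a unique representative modulo the ideal in~(\ref{eq:GrobnerTorsionPart}). A reduced Gr\"obner basis element whose coefficients have nonunit gcd $d$ encodes a relation that kills $d$ times its leading monomial in the quotient, hence contributes a cyclic $\mathbb{Z}/d$ summand to the module on that row. Matching the multiset of coefficient gcds to the invariant factor (Smith normal form) decomposition of the resulting finitely generated abelian group then yields both the torsion types and their multiplicities. The delicate point is verifying that the reduced Gr\"obner basis is minimal enough to present the module without spurious redundancies, i.e.\ that the coefficient gcds are genuine invariants of the module presentation rather than artefacts of the particular reduction path; this is where the uniqueness clause of the reduced Gr\"obner basis is essential.
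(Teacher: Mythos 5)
Your treatment of parts (1) and (2) and of the $E_{r+1}$-page generators and relations follows essentially the same route as the paper: lift $d^r$ to the map $\phi_a^r$ on the ambient polynomial ring, observe that a lift of a $d^r$-cocycle either lies in $\ker(\phi_a^r)$ or has nontrivial image landing in $S_{a-r+1}I$ (the paper packages this as a commutative diagram with the quotient maps by $I$), compute the intersection as in Remark~\ref{rmk:intersectionGrobner}, and formally adjoin $S_aI\cap E_r^{a,*}$ so that every relation is supported on the stated generating set. The justification of the degree caveat via the bigrading is also the same.

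The divergence, and the genuine gap, is in the torsion statement over $\mathbb{Z}$. You propose to match the coefficient gcds of the reduced Gr\"obner basis to the invariant factors of the presentation, but you explicitly leave the crucial verification --- that these gcds are invariants of the module rather than artefacts of the reduction path --- as an unresolved ``delicate point''. That verification is the entire content of this part of the proposition: an element $g$ of the reduced Gr\"obner basis with content $d>1$ certainly gives $d\cdot(g/d)=0$ in the quotient, but to conclude a $\mathbb{Z}/d$ summand (with the right multiplicity) one must rule out that $e\cdot(g/d)$ already lies in the ideal~(\ref{eq:GrobnerTorsionPart}) for a proper divisor $e$ of $d$, and one must show that no torsion is missed or double-counted. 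The paper closes this with a concrete inductive reduction argument: take a minimal generating set $T_1,\dots,T_m$ of the torsion relations; the leading term of $T_1$ is divisible by the leading term of a unique $g\in G$, and since $T_1$ reduces to $0$ by $G$, matching leading terms and then fully $E$-reducing by $G\setminus\{g\}$ leaves exactly $g$; iterating, after first $E$-reducing each subsequent $T_i$ by the elements of $G$ already produced, gives a bijection between the minimal torsion generators and the torsion elements of $G$, with minimality preventing collapse and the generation hypothesis preventing omissions. To complete your version you would need either this reduction argument or an explicit appeal to the uniqueness of normal forms in Theorem~\ref{thm:GrobnerOver} applied to the classes $e\cdot(g/d)$; as written, the Smith-normal-form step is a restatement of what must be proved, not a proof of it.
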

	
	    For part (1) of the proposition, the kernel of a morphism of polynomial rings can also be obtained using Gr\"{o}bner basis, see for example \cite{Biase05}.
	    However such computations will be unnecessary in the present work.
	    During a Gr\"{o}bner basis computation of the intersection of ideals in part (2), expression containing elements in $E_r^{*,b}$ for $b>a$ may be considered, these can be discarded immediately, since reduction, $S$-polynomials and $G$-polynomials of such elements can never decease the row $b$ during the procedure without reducing the entire polynomial to zero.
	    Doing this greatly speeds up the execution of the algorithm.

	    \begin{proof}
	        For each $a\geq 0$ and $r\geq 2$,
	        the following diagram commutes
				\begin{equation*}
				    \xymatrix{
						{\ker(\phi_a^r)} \ar[r]^(.5){} \ar[d]^(.45){q} & {R[S_{a},x_1,x_2,\dots,x_n]} \ar[r]^(0.46){\phi_a^r} \ar[d]^(.45){q}  & {R[S_{a-r+1},x_1,x_2,\dots,x_n]} \ar[d]^(.45){q} \\
						{\ker(d^r)} \ar[r]^(.45){} & {E_r^{a,*}} \ar[r]^-(.5){d^r} & {E_r^{a-r+1,*}}}
				\end{equation*}
			where $q$ is the quotient map by $I$.
			Elements of $\ker(\phi_a^r)$ remain in $\ker(d^r)$ after quotient map $q$. Other element of $R[S_{a-r+1},x_1,x_2,\dots,x_n]$ not in $\ker(\phi_a^r)$ that are in the kernel of $d^r$ under $q$ have non-trivial image under $\phi^r_n$ and so are contained in $S_{a-r+1}I$.
			Hence parts (1) and (2) together describe the whole kernel of $d^r$ on row $E^{a,*}_{r}$.
			The relations form $I$ and $\im(d^b)$ span all relations in the spectral sequence by construction and additional generators $S_{a-r+1}I \cap E_r^{a,*}$ are formally added to ensure that the all relations are contained in the span of the generators.
			
			Finally, the torsion in the case $R=\mathbb{Z}$ can be seen to be correct by the following inductive argument to $E$-reduce any minimal generating set $T_1,\dots,T_m$ of the torsion relations to the torsion polynomials in the minimal Gr\"{o}bner basis $G$. 
			The leading term of $T_1$ must be divisible by the leading term of a unique element $g$ in $G$.
			As $T_1$ can be reduced to $0$ by $G$, reducing its leading term to be equal to that of $g$ and then fully $E$-reducing it by $G\setminus g$ must leave exactly $g$.
			We follow the same steps to reduce the rest of $T_2,\dots,T_m$ to element of $G$, except we first $E$-reduce each $T_i$ fully by all elements of $g$ already obtained form $T_1,\dots,T_{i-1}$ before beginning. 
			No element of $T_i$ can be reduced to zero by previously obtained element of $g$ as we assume that $T_1,\dots,T_m$ is minimal and there can be no additionally torsion element of $G$ not obtained form reduction of $T_1, \dots, T_m$ as it is assumed to generate the torsion relations.
	    \end{proof}

        Though not necessary in the course of this work, in some cases the procedure outlined in Proposition~\ref{thm:SpectralGrobner} can require an additional computational setup which can again be done algorithmicly as described in the flowing remark.

        \begin{remark}\label{rmk:Pre-ImageReduction}
            Note that the pre-image of the Gr\"{o}bner basis of $\im(\phi_a^r) \cap S_{a-r+1}I$ in part (2) of Proposition~\ref{thm:SpectralGrobner} can be uniquely obtained computationally by fully reducing it by a Gr\"{o}bner basis of $\im(\phi_a^r)$ to $0$ while keeping track of the reductions.
            The generators of $\im(\phi_a^{r})$ need not be a Gr\"{o}bner basis and since the reduction of $\im(\phi_a^r) \cap S_{a-r+1}I$ is in terms of a Gr\"{o}bner basis of $\im(\phi_a^r)$,
            the resulting expression from the reduction is in terms of their Gr\"{o}bner basis not the original generators.
            For the computations made in the paper we will only need to consider the case where the generators of $\im(\phi_a^{r})$ form a Gr\"{o}bner basis.
            
            However an expression in term of the original generators can be deduced as follows.
            Track the computation of the reduced Gr\"{o}bner basis of $\im(\phi_a^{r})$ to obtain an expression for the Gr\"{o}bner basis in terms of the original generators.
            This expression is unique up to Syzygys of the Gr\"{o}bner basis which can be computed through a reduction procedure, see for example \cite{Erocal16},
            the algorithm being equally valid over $\mathbb{Z}$.
        \end{remark}
        
        Though Proposition~\ref{thm:SpectralGrobner} provides a procedure for computations with spectral sequences of the appropriate form, the later calculations in this work are obtained under stricter assumptions that greatly enhance the computation efficacy of integral Gr\"{o}bner basis as detailed in the next remark.
        
        \begin{remark}\label{rmk:HomGrobSimplification}
            When the generators $x_1,x_2\dots,x_n$ of the algebra $A$ in Proposition~\ref{thm:SpectralGrobner} have the same degree, the representatives of generators of $\im(\phi_a^r)$ are homogeneous in their $x_1,x_2\dots,x_n$ components.
            Reduction preserves homogeneity and
            in addition, $S$-polynomial and $G$-polynomials of homogeneous polynomials used during the Gr\"{o}bner basis computation are again homogeneous polynomials and only ever increase the degree.
            Assuming also that generators of $I$ are homogeneous and that elements of $A$ have bounded degree,
            then when computing the Gr\"{o}bner basis with such polynomials to obtain $\im(\phi_a^r) \cap S_{a-r+1}I$ in part (2) of the proposition, we may discard any $S$-polynomial or $G$-polynomials of homogeneous degree greater than the maximal degree in $A$, greatly speeding up the computation.
        \end{remark}
        
        \section{Basis}\label{sec:basis}
			
			We now develop theory to reveal the structure of the cohomology of the free loop space of $SU(n+1)/T^n$, which culminates in Theorem~\ref{thm:Ideals}.
			To begin we consider a basis of $\mathbb{Z}[\gamma_1,\dots,\gamma_n]$ that resembles the image of the $d^2$ differential in Theorem~\ref{thm:allDiff},
			as it becomes easier to study the $E_3$-page and subsequent pages of the spectral sequence, which in addition to making possible theoretical results also simplifies all computations in Section~\ref{sec:LSU4/T3}.
			
			\begin{remark}\label{rmk:TildeBasis}
				In $\mathbb{Z}[\gamma_1,\dots,\gamma_{n}]$, let $\bar{\gamma}=\gamma_1+\cdots+\gamma_n$ and $\tilde{\gamma}_i=\bar{\gamma}+\gamma_i$ for each $1\leq i\leq n$. 
				We rearrange the standard basis $\gamma_1,\dots,\gamma_n$ of $\mathbb{Z}[\gamma_1,\dots,\gamma_{n}]$ to $\gamma_1,\dots,\gamma_{n-1},\bar{\gamma}$
				and then rearrange to $\tilde{\gamma}_1,\dots,\tilde{\gamma}_{n-1},\bar{\gamma}$,
				by adding $\bar{\gamma}$ to all other basis elements.
				Notice that the replacement $\gamma_i \mapsto \tilde{\gamma}_i$ for $1\leq i \leq n-1$, $\gamma_n \mapsto \bar{\gamma}$
				could have instead been chosen as $\gamma_j \mapsto \bar{\gamma}$ for any $1\leq j \leq n$ and $\gamma_i \mapsto \tilde{\gamma}_i$ for any $i\neq j$.
				Furthermore, replacing $\bar{\gamma}$ by $\tilde{\gamma}_n = (n+1)\bar{\gamma}-\tilde{\gamma}_1-\cdots-\tilde{\gamma}_{n-1}$ gives $\tilde{\gamma}_n$,
				so $\tilde{\gamma}_1,\dots,\tilde{\gamma}_{n}$ forms a rational basis.
			\end{remark}
			
			\begin{proposition}\label{prop:basis}
				Using the notation of equation~(\ref{eq:SipleRedusingHomogenious}),
				we can rewrite $h^{n-l+2}_i$ for each $2\leq l \leq n+1$ in the basis of Remark~\ref{rmk:TildeBasis}, as
				\begin{equation*}
					h^{n-l+2}_l=\sum_{\substack{0\leq k \leq l \\ 
					1\leq i_1 \leq \cdots \leq i_k \leq n-l+1}}
					{(-1)^{l-k} \binom{n+1}{l-k} \tilde{\gamma}_{i_1}\cdots\tilde{\gamma}_{i_{k}}\bar{\gamma}^{l-k}}.
				\end{equation*}
			\end{proposition}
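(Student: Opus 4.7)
The plan is to establish the identity as a direct computation in $\mathbb{Z}[\gamma_1,\dots,\gamma_n]$ via the substitution $\gamma_i = \tilde{\gamma}_i - \bar{\gamma}$ coming from Remark~\ref{rmk:TildeBasis}. Write $b = n-l+2$, so that $h^{n-l+2}_l$ is the complete homogeneous polynomial of degree $l$ in $b$ variables. I would expand $h^b_l = \sum_{n_1+\cdots+n_b=l}\gamma_1^{n_1}\cdots\gamma_b^{n_b}$ and apply the binomial theorem to each factor $\gamma_i^{n_i} = (\tilde{\gamma}_i - \bar{\gamma})^{n_i}$. After interchanging sums and regrouping by the total $\tilde{\gamma}$-degree $k = k_1+\cdots+k_b$, the expression becomes
\begin{equation*}
    h^{b}_l = \sum_{k=0}^{l}(-1)^{l-k}\,\bar{\gamma}^{l-k}\sum_{k_1+\cdots+k_b=k}\tilde{\gamma}_1^{k_1}\cdots\tilde{\gamma}_b^{k_b}\cdot C_{(k_1,\dots,k_b)},
\end{equation*}
where, after the shift $m_j = n_j - k_j$, the coefficient is $C_{(k_1,\dots,k_b)} = \sum_{m_1+\cdots+m_b = l-k}\prod_{j}\binom{k_j+m_j}{k_j}$.

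The key step is to show that $C_{(k_1,\dots,k_b)}$ depends only on $k$ and equals $\binom{n+1}{l-k}$. Using the negative-binomial identity $\sum_{m\geq 0}\binom{k_j+m}{k_j}x^m = (1-x)^{-(k_j+1)}$, the $b$-fold product collapses to $(1-x)^{-(k+b)}$, so the coefficient of $x^{l-k}$ is $\binom{l+b-1}{l-k}$. Substituting $b = n-l+2$ gives $l+b-1 = n+1$, producing the advertised binomial coefficient. The inner sum $\sum_{k_1+\cdots+k_b=k}\tilde{\gamma}_1^{k_1}\cdots\tilde{\gamma}_b^{k_b}$ is the complete homogeneous symmetric polynomial $h^{b}_k(\tilde{\gamma})$, which in multiset form is $\sum_{1 \leq i_1 \leq \cdots \leq i_k}\tilde{\gamma}_{i_1}\cdots\tilde{\gamma}_{i_k}$, matching the shape of the claimed right-hand side.

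The remaining work is the index-range bookkeeping needed to put the result in the basis $\{\tilde{\gamma}_1,\dots,\tilde{\gamma}_{n-1},\bar{\gamma}\}$ of Remark~\ref{rmk:TildeBasis} and to match the stated range $1 \leq i_j \leq n-l+1$ on the $\tilde{\gamma}$-indices. The key ingredient here is the linear relation $\tilde{\gamma}_1 + \cdots + \tilde{\gamma}_n = (n+1)\bar{\gamma}$, obtained by summing the definitions $\tilde{\gamma}_i = \bar{\gamma} + \gamma_i$. This relation is precisely what makes the list in Remark~\ref{rmk:TildeBasis} a basis rather than an over-complete generating set, and it is what allows any stray $\tilde{\gamma}_b$ falling outside the basis to be eliminated back into $\bar{\gamma}$ and the lower $\tilde{\gamma}_i$. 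This boundary bookkeeping, rather than any deeper combinatorial obstacle, is where the bulk of the careful case-splitting will sit; the remainder of the argument is a formal rearrangement of sums and a single generating-function expansion.
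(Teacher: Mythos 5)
Your argument is correct in its combinatorial core and takes a genuinely different route from the paper. The paper splits into two cases: for $l=2$ it substitutes $\gamma_n=n\bar{\gamma}-\sum\tilde{\gamma}_j$ and counts occurrences of each monomial type by hand, and for $l\geq 3$ it writes the coefficient of $\tilde{\gamma}_{i_1}^{a_1}\cdots\tilde{\gamma}_{i_k}^{a_k}\bar{\gamma}^{b}$ as $(-1)^b\sum_{\theta}\multiset{n-l+2-k}{b-\theta}\sum_{\alpha_1+\cdots+\alpha_k=\theta}\prod_\beta\binom{a_\beta+\alpha_\beta}{\alpha_\beta}$ and evaluates it by a double induction on $n$ using Pascal's rule and the multiset recurrence. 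Your coefficient $C_{(k_1,\dots,k_b)}=\sum_{m_1+\cdots+m_b=l-k}\prod_j\binom{k_j+m_j}{k_j}$ is the same sum (the paper just separates out the variables with $k_j=0$ into the multiset factor), but your evaluation via $\prod_j(1-x)^{-(k_j+1)}=(1-x)^{-(k+b)}$ replaces the induction with a one-line negative-binomial identity and makes the independence of the coefficient from the individual $k_j$ transparent. This is cleaner and, to my mind, preferable.

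Two points need attention in your final paragraph. First, the elimination of an out-of-basis $\tilde{\gamma}$ via $\tilde{\gamma}_1+\cdots+\tilde{\gamma}_n=(n+1)\bar{\gamma}$ is needed \emph{only} when $l=2$ (so that $b=n-l+2=n$); for $l\geq 3$ one has $b\leq n-1$ and your expression already lies in the basis, with top index $n-l+2$. Do not try to push the range down to $n-l+1$ in that case: the identity with range $n-l+1$ is false for $l\geq3$ (e.g.\ $n=3$, $l=3$ at $\gamma=(1,0,0)$ gives $0\neq 1=h^2_3$), and the paper's own later use in equation~(\ref{eq:SymBasis}) takes the range to be $n-j'+2$; the $n-l+1$ in the statement is an off-by-one that the paper's own proof of the $l\geq3$ case also does not produce. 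Second, you should actually carry out the $l=2$ elimination rather than defer it; it is short: writing $h_2(\tilde{\gamma}_1,\dots,\tilde{\gamma}_n)=h_2(\tilde{\gamma}_1,\dots,\tilde{\gamma}_{n-1})+\tilde{\gamma}_n h_1(\tilde{\gamma}_1,\dots,\tilde{\gamma}_n)$ and $h_1(\tilde{\gamma}_1,\dots,\tilde{\gamma}_n)=(n+1)\bar{\gamma}$, the $\tilde{\gamma}_n$-terms recombine so that the coefficients $\binom{n+1}{2-k}$ are unchanged and the index range drops to $n-1$. Also, $\sum_i\tilde{\gamma}_i=(n+1)\bar{\gamma}$ is not a relation among the basis elements $\tilde{\gamma}_1,\dots,\tilde{\gamma}_{n-1},\bar{\gamma}$ (there is none); it is simply the expression of the non-basis element $\tilde{\gamma}_n$ in that basis.
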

			\begin{proof}
				First note that in the basis of Remark~\ref{rmk:TildeBasis}, we can rewrite the original basis in terms of the new one by
				\begin{equation}\label{eq:BaseExpression}
					\gamma_i=\tilde{\gamma_i}-\bar{\gamma} \text{  for  } 1\leq i\leq n-1, \;\;\; \gamma_n=n\bar{\gamma}-\sum_{i=1}^{n-1}{\tilde{\gamma_i}}.
				\end{equation}
				When $l=2$ using rearrangement (\ref{eq:BaseExpression}),
				\begin{flalign}
					h_2^{n}&=\sum_{a=0}^{2}{\big{(}(n\bar{\gamma}-\sum_{j=1}^{n-1}{\tilde{\gamma}_j})^{2-a}
					\sum_{1\leq i_1\leq i_2 \leq n-1}{\prod^a_{k=1}{(\tilde{\gamma}_{i_k}-\bar{\gamma})}}\big{)}} \nonumber \\
					&= (n\bar{\gamma}-\sum_{j=1}^{n-1}{\tilde{\gamma}_j})^{2}
						+\sum^{n-1}_{a=1}{(n\bar{\gamma}-\sum_{j=1}^{n-1}{\tilde{\gamma}_j})(\tilde{\gamma}_a-\bar{\gamma})}
						+\sum^{n-1}_{a=1}{(\tilde{\gamma}_{a}-\bar{\gamma})^2}
						+\sum_{1\leq i_1 < i_2 \leq n-1}{(\tilde{\gamma}_{i_1}-\bar{\gamma})(\tilde{\gamma}_{i_2}-\bar{\gamma})}. \label{eq:l=2}
				\end{flalign}
				For $1\leq k,k_1, k_2\leq n-1$, $k_1 \neq k_2$,
				we consider the terms of the form 
				\begin{equation*}
					\bar{\gamma}^2, \; \tilde{\gamma}_k\bar{\gamma}, \; \tilde{\gamma}_k^2, \; \tilde{\gamma}_{k_1}\tilde{\gamma}_{k_2}
				\end{equation*}
				and count their occurrences in the summands of (\ref{eq:l=2}).
				In total $n^2$ element of the form $\bar{\gamma}^2$ are produced by the first summand of (\ref{eq:l=2}),
				minus $n(n-1)$ times in the second, $n-1$ in the third and $\binom{n-1}{2}$ in the last.   
				Hence in total
				\begin{equation*}
					n^2-n(n-1)+(n-1)+\binom{n-1}{2}=n+\binom{n-1}{1}+\binom{n-1}{2}=\binom{n}{1}+\binom{n}{2}=\binom{n+1}{2}.
				\end{equation*}
				In total $-2n$ elements of the form $\tilde{\gamma}_k\bar{\gamma}$ are produced in the first summand of (\ref{eq:l=2}),
				$2n-1$ in the second, $-2$ in the third and $2-n$ in the last.
				Hence in total
				\begin{equation*}
					-2n+(2n-1)-2+(2-n)=n+1=\binom{n+1}{1}.
				\end{equation*}
				The terms $\tilde{\gamma}_k^2$ appear once in the first summand of (\ref{eq:l=2}), once in the third and negative once in the second,
				hence once in total.
				The terms $\tilde{\gamma}_{k_1}\tilde{\gamma}_{k_2}$ appear twice in the first summand,
				minus twice in the the second and once in the last,
				hence once in total.
				Therefore the conditions of the proposition are satisfied.
				For $l\geq 3$, we first show that
				\begin{equation*}
					h^{n-l+2}_l=\sum_{\substack{0\leq k \leq l \\ 
					1\leq i_1 \leq \cdots \leq i_k \leq n-l+2}}
					{(-1)^{l-k} \binom{n+1}{l-k} \tilde{\gamma}_{i_1}\cdots\tilde{\gamma}_{i_{k}}\bar{\gamma}^{l-k}}.
				\end{equation*}
				where in the index on the sum here is $1\leq i_1 \leq \cdots \leq i_k \leq n-l+2$ rather than $1\leq i_1 \leq \cdots \leq i_k \leq n-l+1$ in the statement of the proposition.
				The proposition then follows form the statement above by inductions. This is because we have already shown the case $l=2$ and we can obtain the expression for $h^{n-l+2}_l$ in the proposition for $l\geq 3$ from the one above by subtracting off $\tilde{\gamma}_{n-l+2}$ times the expression for $h^{n-l+1}_{l-1}$ in the statement of the proposition,
				as this cancels all the summands containing a multiple $\tilde{\gamma}_{n-l+2}$.
				Using rearrangement (\ref{eq:BaseExpression}),
				\begin{equation}\label{eq:l>3}
					h^{n-l+2}_l=\sum_{1\leq i_1\leq \cdots\leq i_l \leq n-l+2}{\prod_{k=1}^{l}{(\tilde{\gamma}_{i_k}-\bar{\gamma})}}.
				\end{equation}
				For any choice of $1\leq i_1\leq \cdots \leq i_k \leq n-l+2$ and non-negative integers $b,a_1,\dots,a_k$ such that $b+a_1+\cdots+a_k=l$,
				the terms of the form
				\begin{equation}\label{eq:ProdChoice}
					\tilde{\gamma}_{i_1}^{a_1}\cdots\tilde{\gamma}_{i_k}^{a_k}\bar{\gamma}^{b}
				\end{equation}
				describe up to multiplicity all possible summand in the expansion of equation~(\ref{eq:l>3}).
				Define the notation $h^{n-l+2}_l\{ \tilde{\gamma}_{i_1}^{a_1}\cdots\tilde{\gamma}_{i_k}^{a_k}\bar{\gamma}^{b} \}$ to be the multiplicity of the summand containing
				$\tilde{\gamma}_{i_1}^{a_1}\cdots\tilde{\gamma}_{i_k}^{a_k}\bar{\gamma}^{b}$ in the expansion of equation~(\ref{eq:l>3}).
				Using this notation and equation~(\ref{eq:l>3}) if we show that for each $n+1\geq l \geq 3$,
				\begin{equation}\label{eq:BorckekenExpress}
					h^{n-l+2}_l\{ \tilde{\gamma}_{i_1}^{a_1}\cdots\tilde{\gamma}_{i_k}^{a_k}\bar{\gamma}^{b} \} = (-1)^{b} \binom{n+1}{b}
				\end{equation}
				we would complete the proof of the proposition.
				
				Considering each summand of equation~(\ref{eq:l>3}) in tern and counting the number of 
				$\tilde{\gamma}_{i_1}^{a_1}\cdots\tilde{\gamma}_{i_k}^{a_k}\bar{\gamma}^{b}$
				produced in each product, we obtain
				\begin{equation}\label{eq:ChoicExpanssion}
					h^{n-l+2}_l\{ \tilde{\gamma}_{i_1}^{a_1}\cdots\tilde{\gamma}_{i_k}^{a_k}\bar{\gamma}^{b} \} =
					(-1)^b
					\sum_{\theta=0}^{b}{\multiset{n-l+2-k}{b-\theta}\sum_{\substack{\alpha_1+\cdots+\alpha_k=\theta \\ \alpha_j\geq 0}}
					{\prod^{\theta}_{\beta=1}{\binom{a_\beta+\alpha_\beta}{\alpha_\beta}}}}.
				\end{equation}
				We proceed by induction on $n$ and prove (\ref{eq:BorckekenExpress}) for all $n\geq 1$ and $2\leq l\leq n+1$.
				When $n=1$, the only valid value of $l$ is $2$ and $h^{n-l+2}_{l}=(\tilde{\gamma}_1-\bar{\gamma})^2$ whose expansions satisfies (\ref{eq:BorckekenExpress}). 
				Assume that (\ref{eq:BorckekenExpress}) holds for all $\phi\leq n$.
				It is clear that if $b=0$ or $n+1$, then $h^{n-l+2}_{l}\{ \bar{\gamma}^{n+1} \}=(-1)^{n+1}$ and $h^{n-l+2}_{l}\{ \tilde{\gamma}_{i_1}^{a_1}\cdots\tilde{\gamma}_{i_k}^{a_k} \}=1$
				for any choice of $a_1,\dots,a_{k}$ since in the expansion of equation~(\ref{eq:ChoicExpanssion}) there would be only one way to obtain the element.
				For $1\leq b \leq n$, by induction
				\begin{equation}\label{eq:Case(n,b)}
					\binom{n}{b}=(-1)^{b}h^{n-l+1}_{l}\{ \tilde{\gamma}_{i_1}^{a_1}\cdots\tilde{\gamma}_{i_k}^{a_k}\bar{\gamma}^{b} \}
					=\sum_{\theta=0}^{b}{\multiset{n-l+1-k}{b-\theta}\sum_{\substack{\alpha_1+\cdots+\alpha_k=\theta \\ \alpha_j\geq 0}}
					 {\prod^{\theta}_{\beta=1}{\binom{a_\beta+\alpha_\beta}{\alpha_\beta}}}}
				\end{equation}
				and
				\begin{equation}\label{eq:Case(n,b-1)}
					\binom{n}{b-1}=(-1)^{b-1}h^{n-l+2}_{l-1}\{ \tilde{\gamma}_{i_1}^{a_1}\cdots\tilde{\gamma}_{i_k}^{a_k}\bar{\gamma}^{b-1} \}
					=\sum_{\theta=0}^{b-1}{\multiset{n-l+2-k}{b-1-\theta}\sum_{\substack{\alpha_1+\cdots+\alpha_k=\theta \\ \alpha_j\geq 0}}
					 {\prod^{\theta}_{\beta=1}{\binom{a_\beta+\alpha_\beta}{\alpha_\beta}}}}.
				\end{equation}
				For each $0\leq\theta\leq b-1$, the sum of values from (\ref{eq:Case(n,b)}) and (\ref{eq:Case(n,b-1)}) corresponds to the $\theta$ summand in the expression for  
				$h^{n-l+2}_l\{ \tilde{\gamma}_{i_1}^{a_1}\cdots\tilde{\gamma}_{i_k}^{a_k}\bar{\gamma}^{b} \}$
				since the binomial expressions agree and the multiset expression sum to the correct result.
				The only reaming summand in $h^{n-l+2}_l\{ \tilde{\gamma}_{i_1}^{a_1}\cdots\tilde{\gamma}_{i_k}^{a_k}\bar{\gamma}^{b} \}$ is the one corresponding to $\theta=b$.
				However this agrees in (\ref{eq:ChoicExpanssion}) and (\ref{eq:Case(n,b)}) because $\multiset{n-l+2-k}{0}=\multiset{n-l+1-k}{0}=1$, with the binomial parts being identical.
			\end{proof}
			
			\begin{remark}\label{rmk:NewBaisSymGrobner}
			    Note that for a fixed $n$ the set of expressions for $h^{n-l+2}_l$ with $2\leq l \leq n+1$ given in Proposition~\ref{prop:basis} remains a reduced Gr\"obner bases for the ideal they generate with respect to the lexicographic term order on variables $\tilde{\gamma}_1<\cdots<\tilde{\gamma}_n<\bar{\gamma}$.
			\end{remark}
			
			The next theorem shows that the change of basis interacts well with the differentials.
			Recall that, the image of $d^m$ for $m\geq 2$ was determined in Theorem~\ref{thm:allDiff} and shown only to be non-trivial on terms containing $x_m$.
			
			\begin{theorem}\label{thm:InductiveDiff}
			    For $2\leq l \leq n+1$,
			    \begin{equation*}
			        d^{2(l-1)}(x_{2(l-1)})=
			        \sum_{\substack{1 \leq i \leq n}}
					{y_i(\tilde{\gamma}_i-\bar{\gamma})^{l-2}\tilde{\gamma}_i}.
			    \end{equation*}
			    After reduction by lower degree differentials this may be written as
			    \begin{equation*}
			        \sum_{\substack{1 \leq i \leq n}}
					{y_i\tilde{\gamma}_i^{l-1}}.
			    \end{equation*}
			\end{theorem}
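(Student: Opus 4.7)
The plan is to reinterpret Theorem~\ref{thm:allDiff} as a partial-derivative identity and then apply a clean generating-function computation. Observe that if $h_l^n=\sum_{|(c_1,\dots,c_n)|=l}\gamma_1^{c_1}\cdots\gamma_n^{c_n}$ denotes the complete homogeneous symmetric polynomial of degree $l$ in $\gamma_1,\dots,\gamma_n$, then differentiating monomials gives
\[
\partial_{\gamma_j}h_l^n=\sum_{\substack{|(c_1,\dots,c_n)|=l\\ c_j\geq 1}}c_j\,\gamma_1^{c_1}\cdots\gamma_j^{c_j-1}\cdots\gamma_n^{c_n},
\]
so Theorem~\ref{thm:allDiff} is equivalent to $d^{2(l-1)}(x_{2(l-1)})=\sum_{j=1}^{n}y_j\,\partial_{\gamma_j}h_l^n$.

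Differentiating the generating function $\sum_{l\geq 0}h_l^n t^l=\prod_{i=1}^{n}(1-\gamma_i t)^{-1}$ with respect to $\gamma_j$ and reading off the coefficient of $t^l$ yields the identity
\[
\partial_{\gamma_j}h_l^n=\sum_{k=0}^{l-1}\gamma_j^k\,h_{l-1-k}^n.
\]
By Proposition~\ref{prop:Homogeneous-1} together with Remark~\ref{remk:SymQotForms}, after eliminating $\gamma_{n+1}$ the classes $h_m^n$ vanish on $E_2^{*,0}$, and therefore on every later page, for each $2\leq m\leq n+1$, while $h_0^n=1$ and $h_1^n=\bar{\gamma}$. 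Since $l\leq n+1$, only the terms with $k=l-1$ and $k=l-2$ contribute, and so
\[
\partial_{\gamma_j}h_l^n\equiv\gamma_j^{l-1}+\gamma_j^{l-2}\bar{\gamma}=\gamma_j^{l-2}(\gamma_j+\bar{\gamma})=(\tilde{\gamma}_j-\bar{\gamma})^{l-2}\tilde{\gamma}_j,
\]
which, summed over $j$ against $y_j$, establishes the first formula.

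For the reduced form I would compute the difference
\[
\sum_{i=1}^{n} y_i\tilde{\gamma}_i^{l-1}-\sum_{i=1}^{n} y_i(\tilde{\gamma}_i-\bar{\gamma})^{l-2}\tilde{\gamma}_i=\sum_{i=1}^{n} y_i\tilde{\gamma}_i\bigl(\tilde{\gamma}_i^{l-2}-\gamma_i^{l-2}\bigr),
\]
and use the binomial expansion $\tilde{\gamma}_i^{l-2}-\gamma_i^{l-2}=\sum_{k=1}^{l-2}\binom{l-2}{k}\gamma_i^{l-2-k}\bar{\gamma}^k$, which follows from $\tilde{\gamma}_i=\gamma_i+\bar{\gamma}$, to rearrange the difference as
\[
\sum_{k=1}^{l-2}\binom{l-2}{k}\bar{\gamma}^k\sum_{i=1}^{n} y_i\tilde{\gamma}_i(\tilde{\gamma}_i-\bar{\gamma})^{(l-k)-2}.
\]
For each $1\leq k\leq l-2$ the inner sum is the first formula applied with $l$ replaced by $l-k\in[2,n]$, so by the first part it equals $d^{2(l-k-1)}(x_{2(l-k-1)})$. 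Since every $d^r$ is a derivation and $\bar{\gamma}^k$ is a permanent cocycle, each such term is the image of $\bar{\gamma}^k x_{2(l-k-1)}$ under the lower-degree differential $d^{2(l-k-1)}$, and so the whole difference vanishes modulo the images of prior differentials.

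The conceptual heart of the argument is the reinterpretation of the combinatorial sum in Theorem~\ref{thm:allDiff} as $\partial_{\gamma_j}h_l^n$; after that step everything reduces to the single clean identity $\partial_{\gamma_j}h_l^n=\sum_{k=0}^{l-1}\gamma_j^k h_{l-1-k}^n$ and a one-line binomial expansion. The main bookkeeping subtlety is tracking that both identities hold only modulo the ideal $\langle h_2^n,\dots,h_{n+1}^n\rangle$ of the base and the images of earlier $d^r$'s, and confirming that the range $2\leq l-k\leq n$ needed to invoke the first formula in smaller degree matches exactly the range supplied by $1\leq k\leq l-2$ combined with $l\leq n+1$.
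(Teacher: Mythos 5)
Your proof is correct and takes essentially the same route as the paper: your identity $\partial_{\gamma_j}h_l^n=\sum_{k=0}^{l-1}\gamma_j^k h_{l-1-k}^n$ is exactly the closed form of the recursion $\tilde{\gamma}_i^{(j+1)}=h_{j+1}+\gamma_i\tilde{\gamma}_i^{(j)}$ that the paper establishes and then unwinds by induction, with the same observation that only $h_0=1$ and $h_1=\bar{\gamma}$ survive in the quotient. Your binomial-expansion argument for the second claim is a correct and more explicit version of the paper's one-line assertion that the reduction is achieved by subtracting $\bar{\gamma}^m$-multiples of the lower-degree differentials.
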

			
			\begin{proof}
			    Generalise the notation of Remark~\ref{rmk:TildeBasis} to reflect the form of all differentials on $x_{2(l-1)}$ as follows.
			    For each $1\leq i, \leq n$, write
			    \begin{equation*}
			        \tilde{\gamma}_i^{(j)}=
			        \sum_{\substack{|(c_1,\dots,c_{n})|=j}}
					{(c_i+1)\gamma_1^{c_1}\cdots\gamma_i^{c_i}\cdots\gamma_n^{c_{n}}}.
			    \end{equation*}
			    In particular, we have $\tilde{\gamma}_i^{(1)}=\tilde{\gamma_i}$ and by Theorem~\ref{thm:allDiff}
			    \begin{equation*}
			        d^{2j}(x_{2j})=
			        \sum_{\substack{1 \leq i \leq n}}
			        {y_i\tilde{\gamma}_i^{(j)}}.
			    \end{equation*}
			    We next show that by quotienting out by symmetric polynomials,
			    \begin{equation}\label{eq:TildaRelation}
			        \tilde{\gamma}_i^{(j+1)}=\gamma_i\tilde{\gamma}_i^{(j)}
			    \end{equation}
			    and since by definition
			    \begin{equation*}
			        \gamma_i = \tilde{\gamma}_i-\bar{\gamma}
			    \end{equation*}
			    induction on $j$ then completes the proof of the first part of the theorem.
			    Notice that, using Definition~(\ref{defn:CompleteHomogeneous}) of the complete homogeneous symmetric polynomials
			    \begin{align*}
			        \tilde{\gamma}_i^{(j+1)} & =
			        \sum_{\substack{|(c_1,\dots,c_{n})|=j+1}}
					{(c_i+1)\gamma_1^{c_1}\cdots\gamma_i^{c_i}\cdots\gamma_n^{c_{n}}} \\
					& = h_{j+1} +
					\sum_{\substack{|(c_1,\dots,c_{n})|=j+1,
					\\ c_i \geq 1}}
					{c_i\gamma_1^{c_1}\cdots\gamma_i^{c_i}\cdots\gamma_n^{c_{n}}} \\
					& = h_{j+1} +
					\gamma_i\sum_{\substack{|(c_1,\dots,c_{n})|=j+1}}
					{c_i\gamma_1^{c_1}\cdots\gamma_i^{c_i-1}\cdots\gamma_n^{c_{n}}} \\
					& = h_{j+1} + \gamma_i\tilde{\gamma}_i^{(j)}.
			    \end{align*}
			    This proves (\ref{eq:TildaRelation}).
			    The final statement of the theorem is proved by induction subtracting multiples of $\bar{\gamma}^m$ from the lower degree differentials. 
			\end{proof}
			
	    \section{The integral cohomology of $\Lambda(SU(n+1)/T^n)$}\label{sec:Cohomology}
			
		    In this section we use the results in previous section to deduce structure in the final page of $\{ E^r,d^r \}$, the cohomology Leray-Serre spectral sequence of the evaluations fibration (\ref{eq:evalfib}).
		    This result alone performs part of the computation of the integral cohomology of the free loop space of the complete flag manifold of the special unitary  group $SU(n+1)/T^n$ and is applied when $n=3$ in Section~\ref{sec:LSU4/T3}.
		    We first make the following notation and remark on the choice of basis up to sign.
		    For integers $1\leq t \leq j \leq n$ and $1\leq i_1 < \cdots < i_j \leq n$, write
		    \begin{equation}\label{eq:yHat}
		        \hat{y}_{i_1,\dots,i_j}
		    \end{equation}
		    to denote the product $y_{1}y_{2}\cdots y_{n}$ without the elements $y_{i_1},\dots,y_{i_j}$,
		    while
		    \begin{equation}\label{eq:yHatHat}
		        \hat{y}_{i_1,\dots,\hat{i}_t,\dots,i_j}
		    \end{equation}
		    denotes the product $y_{1}y_{2}\cdots y_{n}$ without the elements $y_{i_1},\dots,y_{i_j}$ except $y_{i_t}$ which is still included.
		    
		    \begin{remark}\label{rmk:ySign}
		        By choosing the appropriate signs on generators $y_i$ for each $1\leq i \leq n$, we express the image of differentials given in Theorem~\ref{thm:InductiveDiff} by
		        \begin{equation*}
		            d^{2l}(x_{2l})=
			        \sum_{\substack{1 \leq i \leq n}}
					{(-1)^{i+1}y_i(\tilde{\gamma}_i-\bar{\gamma})^{l-1}\tilde{\gamma}_i}
		        \end{equation*}
		        where $1\leq l \leq n$.
		        This choice is made to simplify multiplication of the differential by terms $y_1,\dots,y_n$ in the following way.
		        Recall that the generators $y_i$ generate an exterior algebra, in particular $y_i^2=0$.
		        So using Remark \ref{rmk:DifOnDivPoly}, for any $m\geq 1$, $1\leq j,l \leq n$ and $1\leq i_1 < \cdots < i_j \leq n$ we have
		        \begin{align*}
					d^{2l}((x_{2l})_m\hat{y}_{i_1,\dots,i_j})
					&=d^{2l}((x_{2l})_m)\hat{y}_{i_1,\dots,i_j} \\
					&=(x_{2l})_{m-1}\sum_{t=1}^j{(-1)^{i_t+1}(-1)^{i_t+t-2}\hat{y}_{i_1,\dots,\hat{i}_t,\dots,i_j}(\tilde{\gamma}_{i_t}-\bar{\gamma})^{l-1}\tilde{\gamma}_{i_t}} \\
					&=(x_{2l})_{m-1}\sum_{t=1}^j{(-1)^{t-1}\hat{y}_{i_1,\dots,\hat{i}_t,\dots,i_j}
					(\tilde{\gamma}_{i_t}-\bar{\gamma})^{l-1}\tilde{\gamma}_{i_t}}
		        \end{align*}
		        where the additional $(-1)^{i_t+t-2}$ sign changes come from reordering the $y_i$.
				The generator $y_t$ swaps places with $y_i$, $i_t-1$ times for $i<t$ changing the sign each time,
				however $t-1$ of these $y_i$ are missing.
		        Therefore for the remainder of the section we assume the expression for the differential given above.
		    \end{remark}
		    
		    The next theorem deduces for all $n\geq 2$ important structural information on the interaction between the images of differentials and the symmetric polynomials in a special case relevant to the spectral sequence $\{ E^r,\; d^r \}$.
		    In particular the second part of the theorem solves a particular case of part (2) of Proposition~\ref{thm:SpectralGrobner} and the first part describe the torsion generators in this case as detailed at the end of Proposition~\ref{thm:SpectralGrobner}.
		    Hence this result alone perform part of the calculations in Section~\ref{sec:LSU4/T3}.   
			
		    \begin{theorem}\label{thm:Ideals}
		        Given integers
		        \begin{equation*}
		            1\leq i\leq n-1,\; 1\leq j,l \leq n,\; 2\leq j'\leq n+1 ,\; 1\leq k \leq n+1,\; 1\leq t < l,\; 1\leq i_1 \leq i_2 \leq \cdots \leq i_j \leq n
		        \end{equation*}
		        consider all ideals in $\mathbb{Z}[\tilde{\gamma}_i,\bar{\gamma},y_j]$ with $d^{2l}$ image as described in Remark~\ref{rmk:ySign}.
		        Then the following hold:
		        \begin{enumerate}
		        \item
    		        There is a equality of ideals
    		        \begin{equation*}\label{eq:BottemGrober}
    		            \langle d^{2l}(x_{2l}\hat{y}_j),\; y_1\cdots y_n h_{j'}^{n-j'+2} \rangle
    		            =
    		            \langle y_1\cdots y_n\tilde{\gamma}_i,\; \binom{n+1}{k}y_1\cdots y_n\bar{\gamma}^k \rangle. 
    		        \end{equation*}
                \item
    			    For a fixed choice of $2 \leq l \leq n$, the intersection of ideals
    			    \begin{equation*}\label{eq:IntersectBottem}
    			        \langle d^{2l}(x_{2l}\hat{y}_j) \rangle
    			        \cap
    			        \langle d^{2t}(x_{2t}\hat{y}_j),\; y_1\cdots y_n h_{j'}^{n-j'+2} \rangle
    			    \end{equation*}
    			    is given by
    			    \begin{align*}\label{eq:Bottem}
    			        y_1\cdots y_n \langle & \left(\lcm \left(n+1,\binom{n+1}{j'}\right)
    			        /\binom{n+1}{j'}
    			        \right)
    			        h_{j'}^{n-j'+2},
    			        \tilde{\gamma}_i h_{j'}^{n-j'+2}, (n+1)\bar{\gamma}h_{j'}^{n-j'+2} \rangle
    			    \end{align*}
    			    if $l=1$ and
    			    \begin{equation*}
    			        \langle d^{2l}(x_{2l}\hat{y}_j) \rangle
    			    \end{equation*}
    			    otherwise.
			    \end{enumerate}
			\end{theorem}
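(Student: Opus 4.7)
The plan is to handle the two parts in succession. The two technical workhorses used throughout are: (i) the exterior-algebra collapse $\hat{y}_j \cdot y_i = 0$ unless $i = j$, which immediately gives
\[
\hat{y}_j d^{2l}(x_{2l}) = \pm y_1\cdots y_n (\tilde{\gamma}_j - \bar{\gamma})^{l-1}\tilde{\gamma}_j,
\]
and (ii) the expansion from Proposition~\ref{prop:basis}, according to which the only pure-$\bar{\gamma}$ term in $h_{j'}^{n-j'+2}$ is $(-1)^{j'}\binom{n+1}{j'}\bar{\gamma}^{j'}$, with every other summand carrying some $\tilde{\gamma}_i$ factor.

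For Part~(1), first specialise $l=1$, which places each $y_1\cdots y_n\tilde{\gamma}_i$ in the LHS. Reducing the Proposition~\ref{prop:basis} expansion of $y_1\cdots y_n h_{j'}^{n-j'+2}$ modulo these places $\binom{n+1}{k}y_1\cdots y_n\bar{\gamma}^k$ in the LHS for $2\leq k\leq n+1$, while the case $k=1$ follows from the identity $(n+1)\bar{\gamma}=\sum_{i=1}^n\tilde{\gamma}_i$ implicit in Remark~\ref{rmk:TildeBasis}. Both reverse inclusions drop out of the same computations.

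For Part~(2), the case $l\geq 2$ is immediate: every generator of the first ideal is divisible by $\tilde{\gamma}_j$ (interpreting $\tilde{\gamma}_n$ as $(n+1)\bar{\gamma}-\sum_{i<n}\tilde{\gamma}_i$ when $j=n$) and so lies in $A_1=\langle y_1\cdots y_n\tilde{\gamma}_j\rangle$, which in turn lies inside the second ideal via its $t=1$ part. Hence the first ideal is contained in the second and the intersection equals the first. The substantive case is $l=1$, where one must compute
\[
\langle y_1\cdots y_n\tilde{\gamma}_j\rangle \;\cap\; \langle y_1\cdots y_n h_{j'}^{n-j'+2}\rangle.
\]
The three stated generators are verified to lie in both ideals directly: $\tilde{\gamma}_i y_1\cdots y_n h_{j'}^{n-j'+2}$ carries an explicit $\tilde{\gamma}_i$ factor; $(n+1)\bar{\gamma} y_1\cdots y_n h_{j'}^{n-j'+2}$ is in $A_1$ via $(n+1)\bar{\gamma}=\sum\tilde{\gamma}_j$; and the $\lcm(n+1,\binom{n+1}{2})$-multiple is obtained by combining $(n+1)\bar{\gamma} h_{j'}^{n-j'+2}$ and a suitable multiple of $h_2^{n-1}h_{j'}^{n-j'+2}$ whose pure-$\bar{\gamma}$ contributions cancel modulo $\langle\tilde{\gamma}_j\rangle$.

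For the reverse inclusion, write a general element of the intersection as $\sum_{j'} f_{j'} y_1\cdots y_n h_{j'}^{n-j'+2}$. Reducing each $f_{j'}$ modulo $\langle\tilde{\gamma}_j\rangle$ produces a polynomial $\bar{f}_{j'}\in\mathbb{Z}[\bar{\gamma}]$; membership of the sum in $A_1$ forces $\sum_{j'}\bar{f}_{j'}(-1)^{j'}\binom{n+1}{j'}\bar{\gamma}^{j'}$ to be divisible by $(n+1)\bar{\gamma}$ in $\mathbb{Z}[\bar{\gamma}]$, since the image of $A_1$ in $\mathbb{Z}[\bar{\gamma}]$ under reduction by $\langle\tilde{\gamma}_j\rangle$ is generated precisely by $(n+1)\bar{\gamma}$. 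The resulting integer divisibility conditions on the coefficients of the $\bar{f}_{j'}$ are exactly the conditions encoded by the three stated generators. The main obstacle is this final piece of integer-arithmetic bookkeeping, tracking how the coefficients $\binom{n+1}{j'}$ and divisibility by $n+1$ interact across the range of $j'$ to produce the $\lcm(n+1,\binom{n+1}{2})$ factor.
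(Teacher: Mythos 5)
Your overall framework is the same as the paper's: you isolate $\hat{y}_j d^{2l}(x_{2l})=\pm y_1\cdots y_n(\tilde{\gamma}_j-\bar{\gamma})^{l-1}\tilde{\gamma}_j$ via the exterior collapse, identify the $l=1$ ideal with $\langle y_1\cdots y_n\tilde{\gamma}_i,\,(n+1)y_1\cdots y_n\bar{\gamma}\rangle$, and use Proposition~\ref{prop:basis} to reduce $y_1\cdots y_n h_{j'}^{n-j'+2}$ to its pure-$\bar{\gamma}$ remainder $\pm\binom{n+1}{j'}y_1\cdots y_n\bar{\gamma}^{j'}$. Part (1) and the $l\geq 2$ case of Part (2) are fine on this basis; indeed your argument for $l\geq 2$ (each generator of $\langle\hat{y}_jd^{2l}(x_{2l})\rangle$ is a polynomial multiple of $\hat{y}_jd^{2}(x_2)$, which sits in the second ideal via $t=1$, so the first ideal is contained in the second) is cleaner than the paper's detour through an equality of extended ideals.

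The gap is in the $l=1$ case of Part (2). First, your mechanism for producing the $\lcm$-generator --- ``combining $(n+1)\bar{\gamma}h_{j'}^{n-j'+2}$ and a suitable multiple of $h_2^{n-1}h_{j'}^{n-j'+2}$'' --- cannot work: those elements have $\gamma$-degrees $j'+1$ and $j'+2$, while the target is an integer multiple of $h_{j'}^{n-j'+2}$ in degree $j'$. The correct (and easy) route, which is what the paper uses, is that $c\,h_{j'}^{n-j'+2}y_1\cdots y_n$ lies in the $d^2$-ideal precisely when its pure-$\bar{\gamma}$ remainder $c\binom{n+1}{j'}\bar{\gamma}^{j'}$ is a multiple of $(n+1)\bar{\gamma}$, i.e.\ when $(n+1)\mid c\binom{n+1}{j'}$. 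Second, and more seriously, you explicitly defer the reverse inclusion to ``integer-arithmetic bookkeeping'' that you do not carry out, and that bookkeeping is the entire content of the claim: after reducing the coefficients $f_{j'}$ modulo $\langle\tilde{\gamma}_i\rangle$ the membership condition becomes, in each total degree, a single congruence modulo $n+1$ coupling the coefficients of \emph{all} the $\bar{f}_{j'}$ simultaneously, so one must rule out cross-$j'$ cancellations and show the solution set is generated by the three stated families. Note also that the minimal integer $c$ with $c\,h_{j'}^{n-j'+2}y_1\cdots y_n$ in the intersection is $(n+1)/\gcd\bigl(n+1,\binom{n+1}{j'}\bigr)$, which depends on $j'$, so matching this against the single constant $\lcm\bigl(n+1,\binom{n+1}{2}\bigr)$ in the statement requires a careful reading of what that generator is meant to denote; asserting that the conditions ``are exactly'' those encoded by the generators, without this analysis, leaves the hard direction unproved. (The paper's own proof is also terse at this point, but it at least verifies degree by degree which multiples of $h_{j'}^{n-j'+2}$ enter the intersection.)
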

			
			\begin{proof}
			    We begin by proving part $(2)$ in the case $l=1$, then extend to the general case.
			    Components of the proof of part $(2)$ are then used to prove part $(1)$ in the case $l=1$.
			    First rearranging the generator representatives, then using the second part of Theorem~\ref{thm:InductiveDiff} and Remark~\ref{rmk:TildeBasis}, we see that
			    \begin{align}\label{eq:d^2Ideal}
			        \langle d^{2}(x_2\hat{y}_j) \rangle
			        =& \langle (-1)^{i+1}d^2(x_2)\hat{y}_i,\; (-1)^{n+1}d^2(x_2)(\hat{y}_n+\sum_{1\leq k\leq n-1}(-1)^{i+1}\hat{y}_k)\rangle \nonumber
			        \\
			        =& \langle y_1\dots y_n \tilde{\gamma_i},\; (n+1)y_1\dots y_n \bar{\gamma} \rangle.
			    \end{align}
			    So when $l=1$ generators $y_1\cdots y_n\tilde{\gamma}_i h_{j'}^{n-j'+2}$ and $y_1\cdots y_n(n+1)\bar{\gamma}h_{j'}^{n-j'+2}$ lie on both sides of the intersection of ideals in part $(2)$,
			    hence are contained in the intersection.
			    By Proposition~\ref{prop:basis},
			    \begin{equation}\label{eq:SymBasis}
					y_1,\dots,y_nh^{n-j'+2}_{j'}=y_1,\dots,y_n\sum_{\substack{0\leq t \leq j' \\ 1\leq i_1 \leq \cdots \leq i_t \leq n-j'+1}}
					{(-1)^{j'-t} \binom{n+1}{j'-t} \tilde{\gamma}_{i_1}\cdots\tilde{\gamma}_{i_{t}}\bar{\gamma}^{j'-t}}.
				\end{equation}
				Notice that the terms of the sum with $t>0$,
				are contained in $\langle y_1\dots y_n \tilde{\gamma}_i \rangle$, hence in $\langle d^{2l}(x_l\hat{y}_j) \rangle$ by (\ref{eq:d^2Ideal}).
				Using again (\ref{eq:d^2Ideal}),
				the left hand ideal of the intersection contains
				the generator $(n+1)\bar{\gamma}$.
				The summands of (\ref{eq:SymBasis}) with $t=0$, are divisible by $\bar{\gamma}$ not $\tilde{\gamma}_i$ and so are contained in the intersection
				only when divisible by $n+1$.
				Hence a scalar multiple of $h_{j'}^{n-{j'}+2}$ is in intersection
				when multiplied by the least common multiple of the $n+1$ and $\binom{n+1}{j'}$ divided by $\binom{n+1}{j'}$ since all other terms in the summands of \eqref{eq:SymBasis} for $t>0$ are divisible by a $\tilde{\gamma}_i$.
				Since all necessary degrees have been considered this completes the proof of prat $(2)$ when $l=1$.
				Considering now $2 \leq l \leq n$, by Theorem~\ref{thm:InductiveDiff} combined with Remark~\ref{rmk:ySign}, we have
				\begin{equation*}
			        d^{2l}(x_{2l})=
			        \sum_{\substack{1 \leq i \leq n}}
					{(-1)^{i+1}y_i(\tilde{\gamma}_i-\bar{\gamma})^{l-1}\tilde{\gamma}_i}.
			    \end{equation*}
				Hence applying the discussion in Remark~\ref{rmk:ySign}, we obtain
				\begin{equation*}
			        d^{2l}(x_{2l}\hat{y}_j )=
					{(-1)^{j+1}y_1\cdots y_n(\tilde{\gamma}_j-\bar{\gamma})^{l-1}\tilde{\gamma}_j}.
			    \end{equation*}
				Therefore
				\begin{equation*}
				    \langle d^{2l}(x_{2l}\hat{y}_j),\; y_1\cdots y_n h_{j'}^{n-j'+2} \rangle
				    =
				    \langle d^{2}(x_2\hat{y}_j),\; y_1\cdots y_n h_{j'}^{n-j'+2} \rangle
				\end{equation*}
				so the generators of $\langle \hat{y}_jd^{2l}(x_{2l}),\; y_1\cdots y_n h_{j'+1}^{n-j'+1} \rangle$ when $l>1$ may be omitted when considering the intersection of the ideals in part $(2)$ of the theorem.
				In addition the equality of ideals in part $(1)$ also follows from (\ref{eq:d^2Ideal}) and (\ref{eq:SymBasis}).
				When the generators of (\ref{eq:d^2Ideal}) are extended with those from (\ref{eq:SymBasis}),
				using the discussion below (\ref{eq:SymBasis})
				we see that the sum in (\ref{eq:SymBasis}) can be reduced to just $\binom{n+1}{k}y_1,\dots,y_n\bar{\gamma}$ for each $k=j'>1$.
				This leaves the required set of ideal generators.
			\end{proof}
			
		\begin{remark}\label{rmk:CasePre-image}
		    For the purposes of obtaining pre-images required in part (2) of Proposition~\ref{thm:SpectralGrobner}, we now express the intersection of ideas in part (2) of Theorem~\ref{thm:Ideals} when $2 \leq l \leq n$ in terms of $d^2(x_2)$ multiples free of terms divisible by symmetric polynomials.
		    As discussed in the proof of Theorem~\ref{thm:Ideals} $y_1\cdots y_n\tilde{\gamma}_i$ and $(n+1)y_1\cdots y_n\bar{\gamma}_n$ are in the image of the $d^2$ differential, so $y_1\cdots y_n\tilde{\gamma}_ih^{n-j'+2}_{j'}$ and $(n+1)y_1\cdots y_n\bar{\gamma}_nh^{n-j'+2}_{j'}$ need not be further considered. The remaining generators are
		    \begin{equation}\label{eq:HitGenerators}
		        \left(\lcm\left(n+1,\binom{n+1}{j'}\right)/\binom{n+1}{j'}\right)
		        y_1\cdots y_nh_{j'}^{n-j'+2}
		    \end{equation}
		    for each of which we want to obtain an generator of the preimage under $d^{2}$.
		    
		    Recall that using Remarks~\ref{rmk:TildeBasis},~\ref{rmk:ySign} and Theorem~\ref{thm:InductiveDiff}  we have that
		    \[
		        d^2(x_2\hat{y}_i)=
		        y_1\cdots y_n\tilde{\gamma}_i
		        \;\;\;\ \text{and} \;\;\;
		        d^2(x_2\hat{y}_n)=y_1\cdots y_n((n+1)\bar{\gamma}-\tilde{\gamma}_1-\cdots-\tilde{\gamma}_{n-1}).
		    \]
		    Hence we see that
		    \begin{align*}
		        & d^2 \Bigg{(}\lcm\left(n+1,\binom{n+1}{j'}\right)
		        x_2
		        \Bigg{(} (-1)^{j'}\frac{1}{n+1}
		        \bar{\gamma}^{j'-1}
		        (\hat{y}_1+\cdots+\hat{y}_{n-1}+\hat{y}_n)
		        \\ \nonumber & \;\;\;\; +
		        \frac{1}{\binom{n+1}{j'}}
		        \Bigg(
		        \sum_{\substack{1\leq t \leq j' \\ 1\leq i_1 \leq \cdots \leq i_t \leq n-j'+1}}
				{(-1)^{j'-t} \binom{n+1}{j'-t}\hat{y}_{i_1} \tilde{\gamma}_{i_2}\cdots\tilde{\gamma}_{i_{t}}\bar{\gamma}^{j'-t}}
				\Bigg) \Bigg{)}
				\Bigg{)}
				\\ & =
				\left(\lcm\left(n+1,\binom{n+1}{j'}\right)/\binom{n+1}{j'}\right)
				y_1\cdots y_n
		        \sum_{\substack{0\leq t \leq j' \\ 1\leq i_1 \leq \cdots \leq i_t \leq n-j'+1}}
				{(-1)^{j'-t} \binom{n+1}{j'-t} \tilde{\gamma}_{i_1}\cdots\tilde{\gamma}_{i_{t}}\bar{\gamma}^{j'-t}}
		    \end{align*}
		    which using Proposition~\ref{prop:basis} we can see is the same as the expressions in \eqref{eq:HitGenerators}.
		    Therefore
		    \begin{align}\label{eq:PreImageHitGenerators}
		        &
		        \lcm\left(n+1,\binom{n+1}{j'}\right)
		        \Bigg(\frac{1}{n+1}
		        \bar{\gamma}^{j'-1}
		        (\hat{y}_1+\cdots+\hat{y}_{n-1}+\hat{y}_n)
		        \\ \nonumber & + 
		        \frac{1}{\binom{n+1}{j'}}
		        \Bigg(
		        \sum_{\substack{1\leq t \leq j' \\ 1\leq i_1 \leq \cdots \leq i_t \leq n-j'+1}}
				{(-1)^{t} \binom{n+1}{j'-t}\hat{y}_{i_1} \tilde{\gamma}_{i_2}\cdots\tilde{\gamma}_{i_{t}}\bar{\gamma}^{j'-t}}
				\Bigg)\Bigg)
		    \end{align}
		    are representatives of the preimage under $d^2$ of generators in \eqref{eq:HitGenerators} as required.
		    We remove a multiple of $(-1)^{j'}$ as this only changes the sign of the generators.
		\end{remark}

		\section{The integral cohomology of $\Lambda(SU(4)/T^3)$}\label{sec:LSU4/T3}
		
			Theorem~\ref{thm:Ideals} gives enough information to deduce the algebra structure for the final page of $\{ E^r,d^r \}$ in a special case within the cohomology Leray-Serre spectral sequence of the evaluations fibration (\ref{eq:evalfib}) converging to $H^*(\Lambda(SU(n+1)/T^n);\mathbb{Z})$.
			Using Theorem~\ref{thm:Ideals}, Theorem~\ref{thm:InductiveDiff} and Proposition~\ref{thm:SpectralGrobner} we can deduce the final page completely in the case when $n=3$, which is considerably more complex than the rank $2$ case considered in \cite{Burfitt2018}.
        
        \begin{theorem}\label{thm:CohomologySU4}
			    The $E^\infty$-page of the spectral sequence $\{ E^r,d^r \}$ of the evaluations fibration (\ref{eq:evalfib}) converging to $H^*(\Lambda(SU(4)/T^3);\mathbb{Z})$ is given by the algebra $A/I$, where
			    $A$ is the free graded commutative algebra
			    \begin{align*}
			        A = \Lambda_{\mathbb{Z}} ( &
			        \tilde{\gamma}_{1} ,\; \tilde{\gamma}_{2} ,\; \bar{\gamma} ,\;
			        y_1 ,\; y_2 ,\; y_3
			        ,\;
			        (x_2)_{m_2}(x_4)_{m_4}(x_6)_{m_6}\tilde{\gamma}_2\tilde{\gamma}_1^2\bar{\gamma}^3,
			        (x_2)_{m_2}(x_4)_{m_4}(x_6)_{m_6}y_1y_2y_3
			        ,\; \\ &
			        (x_2)_{m_2}(x_4)_{m_4}(x_6)_{m_6}(\tilde{\gamma}_1^2+\tilde{\gamma}_1\tilde{\gamma}_2+\tilde{\gamma}_2^2+4(\tilde{\gamma}_1+\tilde{\gamma}_2)\bar{\gamma}+6\bar{\gamma}^2)
			        ,\; \\ &
			        (x_2)_{m_2}(x_4)_{m_4}(x_6)_{m_6}
			        (\tilde{\gamma}_1^3+4\tilde{\gamma}_1^2\bar{\gamma}+6\tilde{\gamma}_1\bar{\gamma}^2+4\bar{\gamma}^3)
			        ,\;
			        (x_2)_{m_2}(x_4)_{m_4}(x_6)_{m_6}\bar{\gamma}^4
					,\; \\ &
				    (x_2)_{m_2}(x_4)_{b_4}(x_6)_{b_6}(\hat{y}_1(2\tilde{\gamma}_1+2\tilde{\gamma}_2-5\bar{\gamma})+\hat{y}_2(2\tilde{\gamma}_2-5\bar{\gamma})+3\hat{y}_3\bar{\gamma})
			        ,\; \\ & 
			        (x_2)_{m_2}(x_4)_{b_4}(x_6)_{b_6}(\hat{y}_1(\tilde{\gamma}_1^2-4\tilde{\gamma}_1\bar{\gamma}+5\bar{\gamma}^2)-\hat{y}_1\bar{\gamma}^2-\hat{y}_3\bar{\gamma}^2)
			        ,\;
			        (x_2)_{m_2}(x_4)_{b_4}(x_6)_{b_6}(\hat{y}_1+\hat{y}_2+\hat{y}_3)\bar{\gamma}^2
				    ,\; \\ &
			    	(x_4)_{m_4}(x_6)_{b_6}(\tilde{\gamma}_2\tilde{\gamma}_1^2-6\tilde{\gamma}_2\bar{\gamma}^2-6\tilde{\gamma}_1\bar{\gamma}^2-14\bar{\gamma}^3)
					,\;
					(x_4)_{m_4}(x_6)_{b_6}(\tilde{\gamma}_2\tilde{\gamma}_1\bar{\gamma}+3\tilde{\gamma}_2\bar{\gamma}^2+3\tilde{\gamma}_1\bar{\gamma}^2+6\bar{\gamma}^3)
					,\; \\ &
					(x_4)_{m_4}(x_6)_{b_6}\tilde{\gamma}_2\bar{\gamma}^3
					,\;
					(x_4)_{m_4}(x_6)_{b_6}(\tilde{\gamma}_1^2\bar{\gamma}+\tilde{\gamma}_1\bar{\gamma}^2)
					,\;
					(x_4)_{m_4}(x_6)_{b_6}\tilde{\gamma}_1\bar{\gamma}^3
					,\; \\ &
					(x_6)_{m_6}\tilde{\gamma}_1
					,\;
					(x_6)_{m_6}\tilde{\gamma}_2
					,\;
					(x_6)_{m_6}\bar{\gamma}
					,\; \\ &
					(x_2)_{m_2}(x_4)_{b_4}(x_6)_{b_6}(y_1\tilde{\gamma}_1-y_2\tilde{\gamma}_2-y_3(\tilde{\gamma}_2+\tilde{\gamma}_1+4\bar{\gamma}))
					, \; \\ &
					(x_4)_{m_4}(x_6)_{a_6}y_1\tilde{\gamma}_1^2-y_2\tilde{\gamma}_2^2+y_3(\tilde{\gamma}_2+\tilde{\gamma}_1+4\bar{\gamma})^2
					, \; \\ &
					(x_6)_{m_6}y_1\tilde{\gamma}_1^3-y_2\tilde{\gamma}_2^3-y_3(\tilde{\gamma}_2+\tilde{\gamma}_1+4\bar{\gamma})^3
					, \; \\ &
					(x_2)_{m_2}(x_4)_{b_4}(x_6)_{b_6}(
					y_1(\tilde{\gamma}_2^2+4\tilde{\gamma}_2\bar{\gamma}+6\bar{\gamma}^2)
			        -y_2(\tilde{\gamma}_1^2+4\tilde{\gamma}_1\bar{\gamma}+6\bar{\gamma}^2)
			        -y_3(\tilde{\gamma}_2^2+4\tilde{\gamma}_2\bar{\gamma}-4\bar{\gamma}^2))
			        ,\; \\ &
			        (x_2)_{m_2}(x_4)_{b_4}(x_6)_{b_6}(
			        y_1(\tilde{\gamma}_2\bar{\gamma}^3+\tilde{\gamma}_1\bar{\gamma}^3)
			        +y_2\tilde{\gamma}_1\bar{\gamma}^3
			        -y_3\tilde{\gamma}_2\bar{\gamma}^3
					,\; \\ &
					(x_2)_{m_2}(x_4)_{b_4}(x_6)_{b_6}(
					y_1(2\tilde{\gamma}_2\bar{\gamma}^2+2\tilde{\gamma}_1\bar{\gamma}^2+8\bar{\gamma}^3)
                    -y_2(3\tilde{\gamma}_1^2\bar{\gamma}+10\tilde{\gamma}_1\bar{\gamma}^2+10\bar{\gamma}^3)
                    \\ & \;\;\;\;\;\;\;\;\;\;\;\;
                    +y_3(2\tilde{\gamma}_2\tilde{\gamma}_1^2+8\tilde{\gamma}_2\tilde{\gamma}_1\bar{\gamma}+10\tilde{\gamma}_2\bar{\gamma}^2+5\tilde{\gamma}_1^2\bar{\gamma}+20\tilde{\gamma}_1\bar{\gamma}^2+22\bar{\gamma}^3))
					,\; \\ &
					(x_2)_{m_2}(x_4)_{b_4}(x_6)_{b_6}(
					y_2\tilde{\gamma}_1^2\bar{\gamma}^3
					+y_3\tilde{\gamma}_1^2\bar{\gamma}^3)
					,\; \\ &
					(x_2)_{m_2}(x_4)_{b_4}(x_6)_{b_6}(
					y_2(3\tilde{\gamma}_1^2\bar{\gamma}^2+12\tilde{\gamma}_1\bar{\gamma}^3)
                    \\ & \;\;\;\;\;\;\;\;\;\;\;\;
                    -y_3(2\tilde{\gamma}_2\tilde{\gamma}_1^2\bar{\gamma}+8\tilde{\gamma}_2\tilde{\gamma}_1\bar{\gamma}^2+12\tilde{\gamma}_2\bar{\gamma}^3+5\tilde{\gamma}_1^2\bar{\gamma}^2+20\tilde{\gamma}_1\bar{\gamma}^3))
					,\; \\ &
					(x_2)_{m_2}(x_4)_{b_4}(x_6)_{b_6}
					y_3(\tilde{\gamma}_2\tilde{\gamma}_1^2\bar{\gamma}^2+4\tilde{\gamma}_2\tilde{\gamma}_1\bar{\gamma}^3+4\tilde{\gamma}_1^2\bar{\gamma}^3)
					,\; \\ &
					(x_4)_{m_4}(x_6)_{b_6}(y_1(\tilde{\gamma}_2+3\bar{\gamma})-y_2(\tilde{\gamma}_1+3\bar{\gamma})+y_3(2\tilde{\gamma}_2+2\tilde{\gamma}_1+6\bar{\gamma}))
			        ,\; \\ &
			        (x_4)_{m_4}(x_6)_{b_6}(y_1\tilde{\gamma}_1+y_2(\tilde{\gamma}_1+3\bar{\gamma})-y_3(2\tilde{\gamma}_2+\tilde{\gamma}_1+5\bar{\gamma}))
			        ,\; \\ &
			        (x_4)_{m_4}(x_6)_{b_6}(y_1\bar{\gamma}^2+y_3(\tilde{\gamma}_1^2+3\tilde{\gamma}_1\bar{\gamma}+3\bar{\gamma}^2))
			        ,\; \\ &
			        (x_4)_{m_4}(x_6)_{b_6}(2y_1\bar{\gamma}-y_2(2\tilde{\gamma}_1+4\bar{\gamma})+y_3(2\tilde{\gamma}_2+2\tilde{\gamma}_1+6\bar{\gamma}))
			        ,\; \\ &
			        (x_4)_{m_4}(x_6)_{b_6}(y_2(\tilde{\gamma}_2+\tilde{\gamma}_1+3\bar{\gamma})-y_3(\tilde{\gamma}_2+\bar{\gamma}))
			        ,\; \\ &
			        (x_4)_{m_4}(x_6)_{b_6}(y_2\tilde{\gamma}_1^2-y_3(5\tilde{\gamma}_1^2+12\tilde{\gamma}_1\bar{\gamma}+12\bar{\gamma}^2))
			        ,\; \\ &
			        (x_4)_{m_4}(x_6)_{b_6}(y_2\tilde{\gamma}_1\bar{\gamma}+y_3(3\tilde{\gamma}_1^2+7\tilde{\gamma}_1\bar{\gamma}+6\bar{\gamma}^2))
			        ,\; \\ &
			        (x_4)_{m_4}(x_6)_{b_6}(y_2\bar{\gamma}^2-y_3(\tilde{\gamma}_1^2+2\tilde{\gamma}_1\bar{\gamma}+\bar{\gamma}^2))
			        ,\; \\ &
			        (x_4)_{m_4}(x_6)_{b_6}y_3(\tilde{\gamma}_2\bar{\gamma}+\tilde{\gamma}_1\bar{\gamma}+4\bar{\gamma}^2)
			        ,\; \\ &
			        (x_4)_{m_4}(x_6)_{b_6}y_3(\tilde{\gamma}_1^2\bar{\gamma}-2\bar{\gamma}^3)
			        ,\;
			         (x_4)_{m_4}(x_6)_{b_6}y_3(\tilde{\gamma}_1\bar{\gamma}^2+2\bar{\gamma}^3)
					,\; \\ &
					(x_6)_{m_6}(y_1-y_2)
					,\;
					(x_6)_{m_6}y_3
					)
			    \end{align*}
			    with $|\tilde{\gamma}_{j}|=|\bar{\gamma}|=2$, $|(x_k)_{m_k}|=km_k$ and $|(x_k)_{a_k}|=ka_k$, $I$ is the ideal
			    \begin{align*}
			        I = [ & 
			        (x_2)_{a_2}
			        ((x_2)_1^b-!(x_2)_b)s_2
			        ,\; \\ &
			        (x_4)_{a_4}
			        ((x_4)_1^b-!(x_4)_b)s_4
			        ,\; \\ &
                    (x_6)_{a_6}
			        ((x_6)_1^b-!(x_6)_b)s_6
			        ,\; \\ &
			        (x_2)_{a_2}(x_4)_{a_4}(x_6)_{a_6}(\tilde{\gamma}_1^2+\tilde{\gamma}_1\tilde{\gamma}_2+\tilde{\gamma}_2^2+4(\tilde{\gamma}_1+\tilde{\gamma}_2)\bar{\gamma}+6\bar{\gamma}^2)
			        ,\; \\ &
			        (x_2)_{a_2}(x_4)_{a_4}(x_6)_{a_6}
			        (\tilde{\gamma}_1^3+4\tilde{\gamma}_1^2\bar{\gamma}+6\tilde{\gamma}_1\bar{\gamma}^2+4\bar{\gamma}^3)
			        ,\; \\ &
			        (x_2)_{a_2}(x_4)_{a_4}(x_6)_{a_6}\bar{\gamma}^4
			        ,\; \\ &
			        (x_2)_{a_2}(x_4)_{a_4}(x_6)_{a_6}
			        (y_1\tilde{\gamma}_1+y_2\tilde{\gamma}_2+y_3(4\bar{\gamma}-\tilde{\gamma}_1-\tilde{\gamma}_2))
			        ,\; \\ &
			        (x_2)_{a_2}(x_4)_{a_4}(x_6)_{a_6}
			        (y_1\tilde{\gamma}_1^2+y_2\tilde{\gamma}_2^2+y_3(4\bar{\gamma}-\tilde{\gamma_1}-\tilde{\gamma}_2)^2)
			        ,\; \\ &
			        (x_2)_{a_2}(x_4)_{a_4}(x_6)_{a_6}
			        (y_1\tilde{\gamma}_1^3+y_2\tilde{\gamma}_2^3+y_3(4\bar{\gamma}-\tilde{\gamma}_1-\tilde{\gamma}_2)^3)
			        ]
			    \end{align*}
			    for elements $s_k\in S_k$ given by
			    \begin{equation*}
			        S_k = \{ a/{(x_k)_{m_k}} \; | \; a \text{ is a generator of } A \text{ divisible by some } (x_k)_{m_k} \}
			    \end{equation*}

			    where $b\geq 1$, $k=2,\:4,\:6$ and $m_k,a_k\geq 0$ with the additional condition that some $m_k\geq 1$ when appearing in a generator of $A$.
			    
			    Furthermore, the cohomology algebra $H^*(\Lambda(SU(4)/T^3);\mathbb{Z})$ is isomorphic as a module
			    to the algebra $A/I$ up to order of $2$-torsion and $4$-torsion.
			    In addition there are no multiplicative extension problem on
			    the sub-algebra generated by $\gamma_1, \: \gamma_2, \: \gamma_3$, the sub-algebra generated by $y_1, \: y_2, \: y_3$
			    and no multiplicative extension on elements $y_i\gamma_j$ for $1\leq i,j\leq 3$.
			\end{theorem}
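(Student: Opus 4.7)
The plan is to specialise the general machinery developed in Sections~\ref{sec:FreeLoopSU(n+1)/Tn}--\ref{sec:Cohomology} to the case $n=3$ and then grind out the spectral sequence page by page. Working in the basis of Remark~\ref{rmk:TildeBasis}, the $E_2$-page for the evaluation fibration of $SU(4)/T^3$ is the tensor product
\begin{equation*}
    \Gamma_{\mathbb{Z}}[x_2,x_4,x_6]\otimes\Lambda_{\mathbb{Z}}(y_1,y_2,y_3)\otimes\frac{\mathbb{Z}[\tilde{\gamma}_1,\tilde{\gamma}_2,\bar{\gamma}]}{\langle h_2^{3},h_3^{3},h_4^{3}\rangle},
\end{equation*}
where the ideal presentation of the base is given by Remark~\ref{remk:SymQotForms} in the complete homogeneous basis (in the explicit rewritten form provided by Proposition~\ref{prop:basis}). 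By Theorem~\ref{thm:InductiveDiff} combined with Remark~\ref{rmk:ySign}, the only non-zero differentials on the algebra generators are $d^2(x_2)$, $d^4(x_4)$ and $d^6(x_6)$, each expressible as $\sum_i(-1)^{i+1}y_i(\tilde{\gamma}_i-\bar{\gamma})^{l-1}\tilde{\gamma}_i$ after the change of basis.

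I would then run the procedure of Proposition~\ref{thm:SpectralGrobner} page by page. First I would compute the $E_3$-page: for each row $E_2^{*,q}$, applying part~(1) of the proposition computes $\ker(\phi^2_q)$, while part~(2), combined with the intersection-of-ideals trick from Remark~\ref{rmk:intersectionGrobner}, produces the additional kernel generators coming from $S_{q-1}I$. This step is tractable because the only non-zero differential on the $y$-generators is $d^2$, and Theorem~\ref{thm:Ideals}(1)--(2) is specifically engineered to solve precisely the intersection computations that arise on the bottom $y$-rows (giving, for instance, the generators $(x_6)_{m_6}\tilde{\gamma}_1,\ (x_6)_{m_6}\tilde{\gamma}_2,\ (x_6)_{m_6}\bar{\gamma}$ appearing in the statement). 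Remark~\ref{rmk:HomGrobSimplification} keeps the integral Gröbner computations finite since every element of the base has bounded degree. The $E_4=E_5$ and $E_6=E_7=E_\infty$ pages are computed in the same way, feeding the output of each stage into the relations of the next via the third sentence of Proposition~\ref{thm:SpectralGrobner}. Finally, the torsion statement at the end of Proposition~\ref{thm:SpectralGrobner} (the greatest common divisor reading from the reduced Gröbner basis~(\ref{eq:GrobnerTorsionPart})) identifies the mod-$2$, mod-$4$ and mod-$4!$ relations that come from the divided powers $(x_k)_b-b!(x_k)_1^b$ interacting with $\ker(d^{2(l-1)})$, yielding the first three families of generators of $I$.

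Having the $E^\infty$-page, the next step is to pass from the associated graded to $H^*(\Lambda(SU(4)/T^3);\mathbb{Z})$ as a module and partly as an algebra. Since $SU(4)/T^3$ is simply connected and the fibration has finite type, convergence of the Serre spectral sequence is standard, so the module structure is recovered up to the usual additive extensions. The subalgebras generated by $\gamma_1,\gamma_2,\gamma_3$ and by $y_1,y_2,y_3$ are copies of $H^*(SU(4)/T^3;\mathbb{Z})$ and of an exterior subalgebra which lift from the $0$-column and the $1$-row respectively, and classes $y_i\gamma_j$ have products detected already on $E_2$, so these extensions are trivial. The main obstacle is the remaining extension ambiguity: because the spectral sequence only determines the torsion of $H^*(\Lambda(SU(4)/T^3);\mathbb{Z})$ up to the associated graded, a $\mathbb{Z}/2\oplus\mathbb{Z}/2$ on $E^\infty$ could a priori lift to a $\mathbb{Z}/4$, and similarly a $\mathbb{Z}/4\oplus\mathbb{Z}/2$ could lift to $\mathbb{Z}/8$; this is precisely the $2$- and $4$-torsion uncertainty stated in the theorem.

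The step I expect to be the hardest is the bookkeeping of the intersection ideals on the $E_4$- and $E_6$-pages, where the kernel-by-image computation mixes contributions from $d^4(x_4)$ and $d^6(x_6)$ simultaneously with the carried-over relations from Theorem~\ref{thm:monomial sum}. Here Theorem~\ref{thm:Ideals} handles only the lowest rows, so the higher rows must be done by direct integral Gröbner computation following Proposition~\ref{thm:SpectralGrobner} and Remark~\ref{rmk:HomGrobSimplification}; the accompanying library \cite{Burfitt2021} is used to execute and verify these. The long explicit list of generators in $A$ and of relations in $I$ in the statement is the output of this final stage.
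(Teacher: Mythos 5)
Your overall strategy coincides with the paper's: set up the $E_2$-page in the $\tilde{\gamma}$-basis, take the differentials from Theorem~\ref{thm:InductiveDiff} and Remark~\ref{rmk:ySign}, and execute Proposition~\ref{thm:SpectralGrobner} row by row with integral Gr\"obner bases, splitting the rows by the number of $y_i$'s appearing. However, you misplace the role of Theorem~\ref{thm:Ideals}. Since $\hat{y}_j$ omits only $y_j$ from $y_1y_2y_3$, that theorem governs the rows whose differential images carry the full product $y_1y_2y_3$, i.e.\ the \emph{top} exterior degree, and its conclusion there is that no new algebra generators arise. The generators $(x_6)_{m_6}\tilde{\gamma}_1$, $(x_6)_{m_6}\tilde{\gamma}_2$, $(x_6)_{m_6}\bar{\gamma}$ you cite as its output come instead from the $y$-free rows, where the intersection $\langle d^6(x_6)\rangle\cap\langle h_2^3,h_3^2,h_4^1\rangle$ must be computed directly; if you went looking for them in Theorem~\ref{thm:Ideals} you would not find them. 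Relatedly, the first three families of generators of $I$ are not outputs of the torsion computation (\ref{eq:GrobnerTorsionPart}); they are the divided-power relations of $H^*(\Omega(SU(4)/T^3);\mathbb{Z})$ carried over from the fibre, and the torsion detected by (\ref{eq:GrobnerTorsionPart}) is only $2$- and $4$-torsion --- there is no ``mod $4!$'' torsion.

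The genuine gap is in the passage from $E_\infty$ to $H^*(\Lambda(SU(4)/T^3);\mathbb{Z})$. You stop at ``recovered up to the usual additive extensions,'' which is strictly weaker than the theorem: left there, a $\mathbb{Z}/2\oplus\mathbb{Z}/2$ or $\mathbb{Z}/4\oplus\mathbb{Z}/2$ in the associated graded could assemble into $\mathbb{Z}/4$ or $\mathbb{Z}/8$, and torsion could interact with free summands. The paper eliminates almost all of this by running the spectral sequence with $\mathbb{Z}/2$ coefficients and comparing dimensions, exactly as in \cite[Theorem~5.2]{Burfitt2018}; only after that comparison is the residual uncertainty reduced to whether the $4$-torsion classes on $E_\infty$ have order $2$ or $4$, which is what ``up to order of $2$-torsion and $4$-torsion'' means. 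The multiplicative non-extension claims are likewise settled by the argument of that earlier theorem, not by your unargued remark that the relevant products are ``detected on $E_2$.'' Without the mod-$2$ comparison your proposal establishes the $E_\infty$-page but not the stated module conclusion.
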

			
			\begin{proof}
			    Throughout the proof all indices lie in the same ranges given in the statement of the theorem.
			    All examples of code used for computation can be found in \cite{Burfitt2021}.
			    Consider the cohomology Leray-Serre spectral sequence $\{ E_r,d^r \}$ associated to the evaluation fibration of $SU(4)/T^3$ studied in Section~\ref{sec:FreeLoopSU(n+1)/Tn},
				\begin{equation*}
					\Omega(SU(4)/T^{3})\to\Lambda(SU(4)/T^3)\to SU(4)/T^3.
				\end{equation*}
				We first consider the algebra structure of the $E_{\infty}$-page of the spectral sequence,
				then consider the implications for the cohomology algebra.
                By Theorem~\ref{thm:H*SU/T} and using the basis of Remark~\ref{rmk:TildeBasis}, the integral cohomology of the base space $SU(4)/T^3$ is given by
			    \begin{equation*}
				    \frac{\mathbb{Z}[\tilde{\gamma}_1,\:\tilde{\gamma}_2,\:\bar{\gamma}]}{\langle h_{2}^{3},\: h_{3}^{2},\: h_{4}^{1} \rangle}
			    \end{equation*}
			    where $|\tilde{\gamma}_j|=|\bar{\gamma}|=2$.
			    Using Proposition~\ref{prop:basis} after changing the sign of the $\bar{\gamma}$ generator, we have
			    \begin{align}\label{eq:SU4SymetricQuotient}
			        & h_{2}^{3} = 
			        \tilde{\gamma}_2^2+\tilde{\gamma}_1\tilde{\gamma}_2+\tilde{\gamma}_1^2+4(\tilde{\gamma}_2+\tilde{\gamma}_1)\bar{\gamma}+6\bar{\gamma}^2
			        ,\; \nonumber \\ &
			        h_{3}^{2} =
			        \tilde{\gamma}_1^3+4\tilde{\gamma}_1^2\bar{\gamma}+6\tilde{\gamma}_1\bar{\gamma}^2+4\bar{\gamma}^3
			        \;  \\ \text{and } &
			        h_{4}^{1} = \bar{\gamma}^4. \nonumber
			    \end{align}
			    Therefore, the maximal degree of elements of the algebra is $6$.
			    From~(\ref{eq:BaseLoopFlag}), the integral cohomology of the fibre $\Omega(SU(4)/T^3)$ is given by
			    \begin{equation}\label{eq:SU4LoopFibre}
				    \Lambda_\mathbb{Z}(y_1,\:y_2,\:y_3)\otimes\Gamma_\mathbb{Z}[x_2,\:x_4,\:x_6]
			    \end{equation}
			    where $|y_i|=1$ and $|x_k|=k$.
			    Applying Theorem~\ref{thm:GrobnerOver} with the Gr\"obner basis from Theorems \ref{thm:monomial sum}, the additive generators on the $E_2$-page of the spectral sequence are given by representative elements of the form
			    \begin{equation}\label{eq:SU4Representatives}
				    (x_2)_{a_2}(x_4)_{a_4}(x_6)_{a_6}y_{\alpha_1}\cdots y_{\alpha_l}P
			    \end{equation}
			    where $0\leq l \leq 3$, $1\leq \alpha_1< \cdots < \alpha_{l}\leq 3$ and $P\in \mathbb{Z}[\tilde{\gamma}_1,\:\tilde{\gamma}_2,\:\bar{\gamma}]$ is a monomial of degree at most $6$.
			    
			    By Theorem~\ref{thm:allDiff}, the only non-zero differentials are $d^2,\;d^4$ and $d^6$
			    which are non-zero only on the generators $x_2,\:x_4$ and $x_6$, respectively.
			    Therefore the spectral sequence converges by the seventh page.
			    Using Theorem~\ref{thm:InductiveDiff} and substituting $\tilde{\gamma}_3$ for $-(4\bar{\gamma}+\gamma_1+\gamma_2)$ in the basis of Remark~\ref{rmk:TildeBasis} and the sign change on $\bar{\gamma}$ made in (\ref{eq:SU4SymetricQuotient}), the images of the differentials up to sign are generated by
			    \begin{align}\label{eq:SU4Difs}
			        d^2(x_2) &= y_1\tilde{\gamma}_1-y_2\tilde{\gamma}_2-y_3(4\bar{\gamma}+\gamma_1+\gamma_2)
			        , \nonumber \\
			        d^4(x_4) &= y_1\tilde{\gamma}_1^2-y_2\tilde{\gamma}_2^2+y_3(4\bar{\gamma}+\tilde{\gamma}_1+\tilde{\gamma}_2)^2
			         \\
			        \text{and } d^6(x_6) &=
			        y_1\tilde{\gamma}_1^3-y_2\tilde{\gamma}_2^3-y_3(4\bar{\gamma}+\gamma_1+\gamma_2)^3.
			        \nonumber
			    \end{align}
			    At this point we immediately obtain a number of the generators and relations occurring in $A$ and $I$ of the statement of the theorem.
			    The monomial generators
			    \begin{equation*}
    			    \tilde{\gamma}_1,\:\tilde{\gamma}_2,\:\bar{\gamma},\: y_1,\:y_2,\:y_3,
    			 (x_2)_{m_2}(x_4)_{m_4}(x_6)_{m_6}\tilde{\gamma}_2\tilde{\gamma}_1^2\bar{\gamma}^3
    			 \text{ and }
    			 (x_2)_{m_2}(x_4)_{m_4}(x_6)_{m_6}y_1y_2y_3
			    \end{equation*}
			    occur in $E_2^{*,0}$ or $E_2^{0,*}$ and are always in the kernel of the differentials, so are algebra generators of the $E_\infty$-page.
				All relations on the $E_7$-page coming from the divided polynomial relations in
				$H^*(\Omega(SU(4)/T^3);\mathbb{Z})$ given in (\ref{eq:SU4LoopFibre}) of the form
                $(x_k)_1^m-m!(x_k)_m$,
                the symmetric relations in (\ref{eq:SU4SymetricQuotient}) and
                images of the differentials (\ref{eq:SU4Difs})
				hold on the $E_\infty$-page and therefore are in $I$.
				It is also necessary to include the multiple of these relations by $(x_2)_{a_2}(x_4)_{a_4}(x_6)_{a_6}$
				to ensure that they occur as generators of the algebra $A$.
			    In addition, we add to the relations $((x_k)_1^m-m!(x_k)_m)s_k$, where the multiple of an element $s_k$ from the set
				$S_k$ ensures all generators that occur as a multiple of some $(x_k)_{a_k}$ appear in $I$.
			    
			    We have considered all possible relations occurring on the $E_{\infty}$-page of the spectral sequence,
			    so it remains to determine all generators of $A$.
			    Any additional generators arise as elements whose image under the differentials obtained using (\ref{eq:SU4Difs}) lie inside the ideal generated by the symmetric relations~(\ref{eq:SU4SymetricQuotient}).
			    To this end we need only consider elements whose image is generated by elements from (\ref{eq:SU4Representatives}) with $l=1,2,3$.
			
			    The case $l=3$ coincides with Theorem~\ref{thm:Ideals}.
			    Since up to $(x_2)_{a_2}(x_4)_{a_4}(x_6)_{a_6}$ multiples, the image of $d^2$ in this case can be rearranged as
			    \begin{equation*}
			        y_1y_2y_2\tilde{\gamma}_1,
			        \; y_1y_2y_2\tilde{\gamma}_2,
			        \; 4y_1y_2y_2\bar{\gamma}
			    \end{equation*}
			    there are no generators corresponding to part (1) of Proposition~\ref{thm:SpectralGrobner} in this case.
			    In addition in the course of the proof of Theorem~\ref{thm:Ideals}
			    it is shown that the images of $d^4$ and $d^6$ in this case are contained in the image of $d^2$ so their domain on these rows lies entirely in the kernel.
			    Part (2) of Proposition~\ref{thm:SpectralGrobner} coincides with the results of part (2) of Theorem~\ref{thm:Ideals}, 
			    so using the expression \eqref{eq:PreImageHitGenerators} from Remark \ref{rmk:CasePre-image} we add to the generator of $A$, the following expressions 
			    \begin{equation*}
			        \hat{y}_1(2\tilde{\gamma}_1+2\tilde{\gamma}_2-5\bar{\gamma})+\hat{y}_2(2\tilde{\gamma}_2-5\bar{\gamma})+3\hat{y}_3\bar{\gamma}
			        ,\;
			        \hat{y}_1(\tilde{\gamma}_1^2-4\tilde{\gamma}_1\bar{\gamma}+5\bar{\gamma}^2)-\hat{y}_1\bar{\gamma}^2-\hat{y}_3\bar{\gamma}^2
			        ,\;
			        (\hat{y}_1+\hat{y}_2+\hat{y}_3)\bar{\gamma}^2
			        .
			    \end{equation*}
			
			    As noted in Remark~\ref{rmk:NewBaisSymGrobner}, the generators (\ref{eq:SU4SymetricQuotient}) are a Gr{\"o}bner basis with respect to the lexicographic minimal ordering given by
			    \begin{equation*}
			        \tilde{\gamma}_2 > \tilde{\gamma}_1 > \bar{\gamma}.
			    \end{equation*}
			    Hence using Theorem~\ref{thm:GrobnerOver}, we see that any element
			    on the $E_2$-page
			    may be reduced by elements of (\ref{eq:SU4SymetricQuotient}) to a unique form and this is zero if and only if it is a multiple of an element of the symmetric ideal. In addition, from the leading terms of (\ref{eq:SU4SymetricQuotient}) it is clear that we need only consider up to
			    \begin{equation*}
			        \tilde{\gamma}_2\tilde{\gamma}_1^2\bar{\gamma}^3
			    \end{equation*}
			    multiples of the image of the differentials given in (\ref{eq:SU4Difs}).
			    
			    We now consider elements in the rows of the spectral sequence corresponding to elements in (\ref{eq:SU4Representatives}) when $l=1$.
			    In this case the image of $d^2,d^4$ and $d^6$ on $x_2,x_4$ and $x_6$, respectively are generated by a single element, so there are no generators of the kernel corresponding to part (1) of Proposition~\ref{thm:SpectralGrobner} that we have not already included.
			    Applying part (2) of Proposition~\ref{thm:SpectralGrobner} to rows in this case,
			    we make all Gr\"obner bases computations up to degree $6$ in variables $\tilde{\gamma}_1,\: \tilde{\gamma}_2\:, \bar{\gamma}$ and degree $1$ in variables $y_1,\:y_2,\:y_3$.
			    For the general case it is sufficient to consider the (\ref{eq:SU4SymetricQuotient}) reduced forms of $\tilde{\gamma}_1,\:\tilde{\gamma}_2,\:\bar{\gamma}$ multiples of the images of differentials given in (\ref{eq:SU4Difs}), as other rows will be multiples of theses by elements of $\Gamma_{\mathbb{Z}[x_2.x_4,x_6]}$.
			    In computer computations, we use the extend lexicographic monomial ordering
			    \begin{equation*}
			        y_1 > y_2 > y_3 > \tilde{\gamma}_2 > \tilde{\gamma}_1 > \bar{\gamma}.
			    \end{equation*}
			    In the case of $\phi^2_2$ on multipels of $x_2$, using Gr\"obner bases to compute the intersection of the ideals generated by $\phi^2_2(x_2)$ and (\ref{eq:SU4SymetricQuotient}) gives an ideal generated by the three elements $\phi^2_2(x_2)h^3_2$, $\phi^2_2(x_2)h^2_3$ and $\phi^2_2(x_2)h^4_1$,
			    all of which are are multiples of elements of (\ref{eq:SU4SymetricQuotient}), hence no additional generators need be added to $A$ in this case. 

                Computing the Gr\"obner bases to obtain the intersection of the ideals generated by $\phi^4_4$ on multiples of $x_4$ and (\ref{eq:SU4SymetricQuotient}) gives an ideal generated by elements $\phi^4_4(x_4)h^2_3$, $\phi^4_4(x_4)h^4_1$ and
                \begin{align*}
                    & \phi^4_4(x_4)(\tilde{\gamma}_2\tilde{\gamma}_1^2-6\tilde{\gamma}_2\bar{\gamma}^2-6\tilde{\gamma}_1\bar{\gamma}^2-14\bar{\gamma}^3), \\
                    & \phi^4_4(x_4)(\tilde{\gamma}_2\tilde{\gamma}_1\bar{\gamma}+3\tilde{\gamma}_2\bar{\gamma}^2+3\tilde{\gamma}_1\bar{\gamma}^2+6\bar{\gamma}^3), \\
                    & \phi^4_4(x_4)\tilde{\gamma}_2\bar{\gamma}^3, \\
                    & \phi^4_4(x_4)(\tilde{\gamma}_1^3+2\tilde{\gamma}_1\bar{\gamma}^2+4\bar{\gamma}^3), \\
                    & \phi^4_4(x_4)(\tilde{\gamma}_1^2\bar{\gamma}+\tilde{\gamma}_1\bar{\gamma}^2), \\
                    & \phi^4_4(x_4)\tilde{\gamma}_1\bar{\gamma}^3.
                \end{align*}
                Since
                \begin{equation*}
                    h_2^3 = (\tilde{\gamma}_1^3+2\tilde{\gamma}_1\bar{\gamma}^2+4\bar{\gamma}^3)+4(\tilde{\gamma}_1^2\bar{\gamma}+\tilde{\gamma}_1\bar{\gamma}^2)
                \end{equation*}
                we need not add any generators to $A$ as multiples $x_4(\tilde{\gamma}_1^3+2\tilde{\gamma}_1\bar{\gamma}^2+4\bar{\gamma}^3)$.
			    It can again be checked by Gr\"obner bases applying part (2) of Proposition~\ref{thm:SpectralGrobner} that $(x_4)_{m_4}(x_6)_{b_6}$ multiples of
			    \begin{align*}
			        \tilde{\gamma}_2\tilde{\gamma}_1^2-6\tilde{\gamma}_2\bar{\gamma}^2-6\tilde{\gamma}_1\bar{\gamma}^2-14\bar{\gamma}^3,\:
			        \tilde{\gamma}_2\tilde{\gamma}_1\bar{\gamma}+3\tilde{\gamma}_2\bar{\gamma}^2+3\tilde{\gamma}_1\bar{\gamma}^2+6\bar{\gamma}^3,\:
			        \tilde{\gamma}_2\bar{\gamma}^3,\:
			        \tilde{\gamma}_1^2\bar{\gamma}+\tilde{\gamma}_1\bar{\gamma}^2
			        \text{ and } \tilde{\gamma}_1\bar{\gamma}^3
			    \end{align*}
			    are in the kernel of $d^6$, therefore they are added to $A$ as generators since the spectral sequence converges on the $E_7$-page.
			    Here part (1) of Proposition~\ref{thm:SpectralGrobner} need not be considered as we have already confirmed that everything lies in the kernel.

			    Finally the Gr\"obner bases of the intersection of the ideals generated by $\phi^6_6(x_6)$ and (\ref{eq:SU4SymetricQuotient}) gives an ideal generated by elements
			    \begin{equation*}
			        d^6(x_6)\tilde{\gamma}_2,\: d^6(x_6)\tilde{\gamma}_1,\: d^6(x_6)\bar{\gamma}
			    \end{equation*}
			    all of whose $(x_6)_{m_6}$  multiples are added to $A$ as generators.
			    
			    Lastly we consider elements in the rows of the spectral sequence corresponding to the elements in (\ref{eq:SU4Representatives}) when $l=2$.
			    In the case of $\phi^2_2$ on multiples of $y_ix_2$ for $i=1,2,3$
			    having image generated by
			    \begin{align*}
			        y_1d^2(x_2) & = -y_1y_2\tilde{\gamma}_2 -y_1y_3(\tilde{\gamma}_2+\tilde{\gamma}_1+4\bar{\gamma}),
			        \\
			        y_2d^2(x_2) & = -y_1y_2\tilde{\gamma}_1 -y_2y_3(\tilde{\gamma}_2+\tilde{\gamma}_1+4\bar{\gamma})
			        \\
			        \text{and } y_3d^2(x_2) & = -y_1y_3\tilde{\gamma}_1 +y_2y_3\tilde{\gamma}_2.
			    \end{align*}
			    We first note that
			    \begin{equation*}
			        y_1d^2(x_2)\tilde{\gamma}_1-y_2d^2(x_2)\tilde{\gamma}_2-y_3d^2(x_2)(\tilde{\gamma}_2+\tilde{\gamma}_1+4\bar{\gamma}) = 0.
			    \end{equation*}
			    Similarly we have the following relations of the $d^4$ and $d^6$ differentials,
			    \begin{align*}
			        & y_1d^2(x_2)\tilde{\gamma}_1^2-y_2d^2(x_2)\tilde{\gamma}_2^2+y_3d^2(x_2)(\tilde{\gamma}_2+\tilde{\gamma}_1+4\bar{\gamma})^2 = 0
			        \\ \text{and} \;\;\; & y_1d^2(x_2)\tilde{\gamma}_1^3-y_2d^2(x_2)\tilde{\gamma}_2^3-y_3d^2(x_2)(\tilde{\gamma}_2+\tilde{\gamma}_1+4\bar{\gamma})^3 = 0.
			    \end{align*}
			    Therefore 
			    $(x_2)_{m_2}(x_4)_{b_4}(x_6)_{b_6}$  multiples of $y_1\tilde{\gamma}_1-y_2\tilde{\gamma}_2-y_3(\tilde{\gamma}_2+\tilde{\gamma}_1+4\bar{\gamma})$,
			    $(x_4)_{m_4}(x_6)_{b_6}$  multiples of $y_1\tilde{\gamma}_1^2-y_2\tilde{\gamma}_2^2+y_3(\tilde{\gamma}_2+\tilde{\gamma}_1+4\bar{\gamma})^2$
			    and
			    $(x_6)_{m_6}$  multiples of $y_1\tilde{\gamma}_1^3-y_2\tilde{\gamma}_2^3-y_3(\tilde{\gamma}_2+\tilde{\gamma}_1+4\bar{\gamma})^3$
			    are added to $A$.
			    It can be checked by computing the Syzygys of the ideals spanned by the image of the three sets of differentials that these are the only additional generators not already included in $A$ corresponding to part (1) of Proposition~\ref{thm:SpectralGrobner}.
			    Considering part (2) of Proposition~\ref{thm:SpectralGrobner}, using Gr\"obner bases to compute the intersection of the ideals generated by $\phi^2_2y_i(x_2)$ and (\ref{eq:SU4SymetricQuotient}) gives an ideal generated by elements corresponding to
			    $y_i\phi^2_2(x_2)h_3^2,\: y_i\phi^2_2(x_2)h_2^3,\: y_i\phi^2_2(x_2)h_1^4$ and
			    \begin{align*}
			    	& -y_1\phi^2_2(x_2)(\tilde{\gamma}_2^2+4\tilde{\gamma}_2\bar{\gamma}+6\bar{\gamma}^2)
			        +y_2\phi^2_2(x_2)(\tilde{\gamma}_1^2+4\tilde{\gamma}_1\bar{\gamma}+6\bar{\gamma}^2)
			        +y_3\phi^2_2(x_2)(\tilde{\gamma}_2^2+4\tilde{\gamma}_1^2
			        -4\bar{\gamma}^2)
			        ,\\
			        & -y_1\phi^2_2(x_2)(\tilde{\gamma}_2\bar{\gamma}^3+\tilde{\gamma}_1\bar{\gamma}^3)
			        -y_2\phi^2_2(x_2)\tilde{\gamma}_1\bar{\gamma}^3
			        +y_3\phi^2_2(x_2)\tilde{\gamma}_2\bar{\gamma}^3
			        ,\\
			        & -y_1\phi^2_2(x_2)(2\tilde{\gamma}_2\bar{\gamma}^2+2\tilde{\gamma}_1\bar{\gamma}^2+8\bar{\gamma}^3)
			        +y_2\phi^2_2(x_2)(3\tilde{\gamma}_1^2\bar{\gamma}+10\tilde{\gamma}_1\bar{\gamma}^2+10\bar{\gamma}^3) \\ & \;\;\;\;\;\;\;\;\;\;\;\;
			        -y_3\phi^2_2(x_2)(2\tilde{\gamma}_2\tilde{\gamma}_1^2+8\tilde{\gamma}_2\tilde{\gamma}_1\bar{\gamma}+10\tilde{\gamma}_2\bar{\gamma}^2+5\tilde{\gamma}_1^2\bar{\gamma}+20\tilde{\gamma}_1\bar{\gamma}^2+22\bar{\gamma}^3)
			        ,\\
			        & -y_2\phi^2_2(x_2)\tilde{\gamma}_1^2\bar{\gamma}^3-y_3\phi^2_2(x_2)\tilde{\gamma}_1^2\bar{\gamma}^3
			        ,\\
			        & -y_2\phi^2_2(x_2)(3\tilde{\gamma}_1^2\bar{\gamma}^2+12\tilde{\gamma}_1\bar{\gamma}^3)
			        +y_3\phi^2_2(x_2)(2\tilde{\gamma}_2\tilde{\gamma}_1^2\bar{\gamma}+8\tilde{\gamma}_2\tilde{\gamma}_1\bar{\gamma}^2+12\tilde{\gamma}_2\bar{\gamma}^3+5\tilde{\gamma}_1^2\bar{\gamma}^2+20\tilde{\gamma}_1\bar{\gamma}^3)
			        ,\\
			        & -y_3\phi^2_2(x_2)(\tilde{\gamma}_2\tilde{\gamma}_1^2\bar{\gamma}^2+4\tilde{\gamma}_2\tilde{\gamma}_1\bar{\gamma}^3+4\tilde{\gamma}_1^2\bar{\gamma}^3)
			    \end{align*}
			    all of whose corresponding $d^2$ preimage $(x_2)_{m_2}(x_4)_{b_4}(x_6)_{b_6}$  multiples are added to $A$ as generators up to sign as it can again be check with Gr\"obner bases that all generators remain in the kernels of the $d^4$ and $d^6$ differentials.
			    
			    Using Gr\"obner bases to compute the intersection of the ideals generated by $y_i\phi^4_4(x_4)$ and (\ref{eq:SU4SymetricQuotient}) gives an ideal generated by elements corresponding to
			    $y_3\phi^4_4(x_4)h_1^4$ and
			    \begin{align}
			        & \label{eq:kerd4row21}
			        -y_1\phi^4_4(x_4)(\tilde{\gamma}_2+3\bar{\gamma})+y_2\phi^4_4(x_4)(\tilde{\gamma}_1+3\bar{\gamma})-y_3\phi^4_4(x_4)(2\tilde{\gamma}_2+2\tilde{\gamma}_1+6\bar{\gamma})
			        ,\\ & \label{eq:kerd4row22}
			        -y_1\phi^4_4(x_4)\tilde{\gamma}_1-y_2\phi^4_4(x_4)(\tilde{\gamma}_1+3\bar{\gamma})+y_3\phi^4_4(x_4)(2\tilde{\gamma}_2+\tilde{\gamma}_1+5\bar{\gamma})
			        ,\\ & \label{eq:kerd4row23}
			        -y_1\phi^4_4(x_4)\bar{\gamma}^2-y_3\phi^4_4(x_4)(\tilde{\gamma}_1^2+3\tilde{\gamma}_1\bar{\gamma}+3\bar{\gamma}^2)
			        ,\\ & \label{eq:kerd4row24}
			        -2y_1\phi^4_4(x_4)\bar{\gamma}+y_2\phi^4_4(x_4)(2\tilde{\gamma}_1+4\bar{\gamma})-y_3\phi^4_4(x_4)(2\tilde{\gamma}_2+2\tilde{\gamma}_1+6\bar{\gamma})
			        ,\\ & \label{eq:kerd4row25}
			        -y_2\phi^4_4(x_4)(\tilde{\gamma}_2+\tilde{\gamma}_1+3\bar{\gamma})+y_3\phi^4_4(x_4)(\tilde{\gamma}_2+\bar{\gamma})
			        ,\\ & \label{eq:kerd4row26}
			        -y_2\phi^4_4(x_4)\tilde{\gamma}_1^2+y_3\phi^4_4(x_4)(5\tilde{\gamma}_1^2+12\tilde{\gamma}_1\bar{\gamma}+12\bar{\gamma}^2)
			        ,\\ & \label{eq:kerd4row27}
			        -y_2\phi^4_4(x_4)\tilde{\gamma}_1\bar{\gamma}-y_3\phi^4_4(x_4)(3\tilde{\gamma}_1^2+7\tilde{\gamma}_1\bar{\gamma}+6\bar{\gamma}^2)
			        ,\\ & \label{eq:kerd4row28}
			        -y_2\phi^4_4(x_4)\bar{\gamma}^2+y_3\phi^4_4(x_4)(\tilde{\gamma}_1^2+2\tilde{\gamma}_1\bar{\gamma}+\bar{\gamma}^2)
			        ,\\ & \label{eq:kerd4row29}
			        -y_3\phi^4_4(x_4)(\tilde{\gamma}_2^2-\tilde{\gamma}_1^2-5\tilde{\gamma}_1\bar{\gamma}-10\bar{\gamma}^2)
			        ,\\ & \label{eq:kerd4row210}
			        -y_3\phi^4_4(x_4)(\tilde{\gamma}_2\tilde{\gamma}_1+2\tilde{\gamma}_1^2+5\tilde{\gamma}_1\bar{\gamma})
			        ,\\ & \label{eq:kerd4row211}
			        -y_3\phi^4_4(x_4)(\tilde{\gamma}_2\bar{\gamma}+\tilde{\gamma}_1\bar{\gamma}+4\bar{\gamma}^2)
			        ,\\ & \label{eq:kerd4row212}
			        -y_3\phi^4_4(x_4)\tilde{\gamma}_1^3
			        ,\\ & \label{eq:kerd4row213}
			        -y_3\phi^4_4(x_4)(\tilde{\gamma}_1^2\bar{\gamma}-2\bar{\gamma}^3)
			        ,\\ & \label{eq:kerd4row214}
			         -y_3\phi^4_4(x_4)(\tilde{\gamma}_1\bar{\gamma}^2+2\bar{\gamma}^3).
			    \end{align}
			    Notice that
			    \begin{align*}
			        -y_2\phi^4_4(x_4)h^2_3 = &
			        \tilde{\gamma}_2(\ref{eq:kerd4row25})+(\ref{eq:kerd4row26})+\bar{\gamma}(\ref{eq:kerd4row25})+3(\ref{eq:kerd4row27})+3(\ref{eq:kerd4row28})+(\ref{eq:kerd4row29})+2(\ref{eq:kerd4row211})
			        , \\ 
			        -y_3\phi^4_4(x_4)h^2_3 = &
			        (\ref{eq:kerd4row29})+(\ref{eq:kerd4row210})+4(\ref{eq:kerd4row211})
			        , \\ 
			        -y_3\phi^4_4(x_4)h^3_2 = &
			        (\ref{eq:kerd4row212})+4(\ref{eq:kerd4row213})+6(\ref{eq:kerd4row214}).
			    \end{align*}
			    Hence as can again be checked that all of these generator are in the kernel of $d^6$, the $(x_4)_{m_4}(x_6)_{b_6}$ multiples of $d^4$ preimages of all except
			    (\ref{eq:kerd4row29}), (\ref{eq:kerd4row210}) and (\ref{eq:kerd4row212})
			    are added to $A$ as generators up to sign.
			    
			    Using Gr\"obner bases to compute the intersection of the ideals generated by $\phi^6_6y_i(x_6)$ and (\ref{eq:SU4SymetricQuotient}) gives an ideal generated by the elements corresponding to
			    \begin{align*}
			        & 
			        -y_1\phi^6_6(x_6)+y_2\phi^6_6y_i(x_6)
			        ,\;
			        -y_2\phi^6_6(x_6)\tilde{\gamma}_2
			        ,\;
			        -y_2\phi^6_6(x_6)\tilde{\gamma}_1
			        ,\;
			        -y_2\phi^6_6(x_6)\bar{\gamma}
			        \text{ and }
			        -y_3\phi^6_6(x_6).
			    \end{align*}
			    Only the first and last of whose $(x_6)_{m_6}$ multiples are added to $A$ as generators up to sign, as the others are already products of exiting generators.
			    
			    By computing the Gr\"obner bases corresponding to (\ref{eq:GrobnerTorsionPart}) of Proposition~\ref{thm:SpectralGrobner}, we determine that only $2$-torsion and $4$-torsion occurs on the $E_\infty$-page of the spectral sequence.
			    Hence module structure of the integral cohomology algebra of $SU(4)/T^3$ up to torsion type is determined by looking at the spectral sequence in modulo $2$ coefficients in the same way as in \cite[Theorem~5.2]{Burfitt2018} where the modulo $3$ spectral sequence is considered.
			    The only remaining additive extension problem is whether the $4$-torsion on the $E_\infty$-page is $2$-torsion or $4$-torsion in $H^*(\Lambda (SU(4)/T^3);\mathbb{Z})$.
			    The multiplicative extension problems on certain subalgebras are also determined in the same way as in the proof of \cite[Theorem~5.2]{Burfitt2018}.
			\end{proof}
        
\bibliographystyle{amsplain}
\bibliography{Ref}

\providecommand{\bysame}{\leavevmode\hbox to3em{\hrulefill}\thinspace}
\providecommand{\MR}{\relax\ifhmode\unskip\space\fi MR }
\providecommand{\MRhref}[2]{%
  \href{http://www.ams.org/mathscinet-getitem?mr=#1}{#2}
}
\providecommand{\href}[2]{#2}
\begin{thebibliography}{10}

\bibitem{Grobner2}
W.~Adams and P.~Loustaunau, \emph{An introduction to {G}r\"{o}bner bases}, Amer
  Mathematical Society, 07 1994 (English).

\bibitem{Beben17}
P.~Beben and N.~Seeliger, \emph{The free loop space homology of
  $(n-1)$-connected 2n-manifolds}, Journal of Homotopy and Related Structures
  \textbf{12} (2017), 413–432.

\bibitem{Grobner1}
T.~Becker, H.~Kredel, and V.~Weispfenning, \emph{Gr\"{o}bner bases: a
  computational approach to commutative algebra}, 0 ed., Springer-Verlag,
  London, UK, 04 1993 (English).

\bibitem{Berglund15}
A.~Berglund and K.~B\:{o}rjeson, \emph{Free loop space homology of highly
  connected manifolds}, Forum Mathematicum \textbf{29} (2015), 201--228.

\bibitem{Biase05}
F.~D. Biase and R.~Urbanke, \emph{An algorithm to calculate the kernel of
  certain polynomial ring homomorphisms}, Experimental Mathematics \textbf{4}
  (1995), 227 -- 234.

\bibitem{Bohmann2021}
A.~M. {Bohmann}, T.~{Gerhardt}, and B.~{Shipley}, \emph{{Topological
  coHochschild Homology and the Homology of Free Loop Spaces}}, arXiv e-prints
  (2021), arXiv:2105.02267.

\bibitem{Borel}
A.~Borel, \emph{Sur la cohomologie des espaces fibres principaux et des espaces
  homogenes de groupes de lie compacts}, Annals of Mathematics \textbf{57}
  (1953), no.~1, 115--207.

\bibitem{bott1958}
R.~Bott, \emph{The space of loops on a lie group.}, Michigan Math. J.
  \textbf{5} (1958), no.~1, 35--61.

\bibitem{AplicationsOfMorse}
R.~Bott and H.~Samelson, \emph{Applications of the theory of {M}orse to
  symmetric spaces}, American Journal of Mathematics \textbf{80} (1958), no.~4,
  964--1029.

\bibitem{Burfitt2021}
M.~Burfitt, \emph{Integral-grobner-basis-for-spectral-sequences},
  https://github.com/MatthewBurfitt/Integral-Grobner-basis-for-spectral-sequence.git,
  01 2021.

\bibitem{Burfitt2018}
M.~I. Burfitt and J.~Grbi\'c, \emph{The cohomology of free loop spaces of rank
  2 flag manifolds}, ArXiv Mathematics e-prints (2018), no.~1803.03923,
  arXiv:1803.03923.

\bibitem{Burghelea86}
D.~Burghelea and Z.~Fiedorowicz, \emph{Cyclic homology and algebraic k-theory
  of spaces—ii}, Topology \textbf{25} (1986), no.~3, 303--317.

\bibitem{Eder19}
G.~Pfister C.~Eder and A.~Popescu, \emph{Standard bases over euclidean
  domains}, Journal of Symbolic Computation (2019), 21--36.

\bibitem{StringTopology}
M.~{Chas} and D.~{Sullivan}, \emph{{String Topology}}, ArXiv Mathematics
  e-prints (1999), no.~9911159, arXiv:9911159.

\bibitem{Cohen2004}
{Cohen R. and Jones, J. and Yan, J.}, \emph{The loop homology algebra of
  spheres and projective spaces}, Categorical decomposition techniques in
  algebraic topology ({I}sle of {S}kye, 2001), Progr. Math., vol. 215,
  Birkh\"auser, Basel, 2004, pp.~77--92. \MR{2039760 (2005c:55016)}

\bibitem{Crabb88}
M.~Crabb and I.~James, \emph{Fibrewise homotopy theory}, Springer Monographs in
  Mathematics, Springer, London, 1988.

\bibitem{Dupont03}
N.~Dupont and K.~Hess, \emph{Commutative free loop space models at large
  primes}, Mathematische Zeitschrift \textbf{244} (2003), 1--34.

\bibitem{Erocal16}
B.~Er\"{o}cal, O.~Motsak, F.~Schreyer, and A.~Steenpa{\ss}, \emph{Refined
  algorithms to compute syzygies}, Journal of Symbolic Computation \textbf{74}
  (2016), 308 -- 327.

\bibitem{Goodwillie85}
T.~Goodwillie, \emph{Cyclic homology, derivations, and the free loopspace},
  Topology \textbf{24} (1985), no.~2, 187--215.

\bibitem{homology_Lflags}
J.~Grbi{\'c} and S.~Terzi{\'c}, \emph{The integral {P}ontrjagin homology of the
  based loop space on a flag manifold}, Osaka J. Math. \textbf{47} (2010),
  no.~2, 439--460. \MR{2722368 (2011i:57049)}

\bibitem{Gromoll69}
D.~Gromoll and W.~Meyer, \emph{Periodic geodesics on compact riemannian
  manifolds}, J. Differential Geom. \textbf{3} (1969), no.~3-4, 493--510.

\bibitem{Idrissi00}
E.~Idrissi, \emph{L'isomorphisme de jones–mccleary et celui de goodwillie
  sont des isomorphismes d'algèbres}, Comptes Rendus de l'Académie des
  Sciences - Series I - Mathematics \textbf{331} (2000), no.~7, 507--510.

\bibitem{Kandri-Rody88}
A.~Kandri-Rody and D.~Kapur, \emph{Computing a gr\"{o}bner basis of a
  polynomial ideal over a euclidean domain}, Journal of Symbolic Computation
  \textbf{6} (1988), no.~1, 37 -- 57.

\bibitem{Kuribayashi91}
K.~Kuribayashi, \emph{On the mod p cohomology of the spaces of free loops on
  the grassman and stiefel manifolds}, Journal of the Mathematical Society of
  Japan \textbf{43} (1991), no.~2, 331--346.

\bibitem{Kuribayashi99}
\bysame, \emph{Module derivations and the adjoint action of a finite loop
  space}, Journal of Mathematics of Kyoto University - Journal Math Kyoto
  University \textbf{39} (1999), 67–85.

\bibitem{Kuribayashi04}
\bysame, \emph{Eilenberg-moore spectral sequence calculation of function space
  cohomology}, manuscripta mathematica \textbf{114} (2004), 305--325.

\bibitem{Lichtblau13}
D.~Lichtblau, \emph{Effective computation of strong gr\"{o}bner bases over
  euclidean domains}, Illinois Journal of Mathematics \textbf{56} (2013), 177
  -- 194.

\bibitem{MT2}
M.~M.~Mimura and H.~Toda, \emph{On {$p$}-equivalences and {$p$}-universal
  spaces}, Comment. Math. Helv. \textbf{46} (1971), 87--97. \MR{0285007 (44
  \#2231)}

\bibitem{Macdonald}
I.~G. Macdonald, \emph{Symmetric functions and hall polynomials}, Oxford
  University Press, 1979.

\bibitem{McCleary1987}
J.~McCleary and W.~Ziller, \emph{On the free loop space of homogeneous spaces},
  American Journal of Mathematics \textbf{109} (1987), no.~4, 765--781.

\bibitem{McCleary1991}
\bysame, \emph{Corrections to "on the free loop space of homogeneous spaces"},
  American Journal of Mathematics \textbf{113} (1991), no.~2, 375--377.

\bibitem{Menichi00}
L.~Menichi, \emph{The cohomology ring of free loop spaces}, Homology Homotopy
  and Applications \textbf{3} (2000), 193--224.

\bibitem{MT}
M.~Mimura and H.~Toda, \emph{Topology of {L}ie groups, {I} and {II}},
  Translations of Mathematical Monograms, vol.~91, American Mathematical
  Society, 1991.

\bibitem{Ndombo2002}
B.~Ndombol and J.~Thomas, \emph{On the cohomology algebra of free loop spaces},
  Topology \textbf{41} (2002), no.~1, 85--106.

\bibitem{Onishchenko12}
A.~Onishchenko and Th. Popelensky, \emph{Rational cohomology of the free loop
  space of a simply connected 4-manifold}, Journal of Fixed Point Theory and
  Applications \textbf{12} (2012), 69--9.

\bibitem{Parhizgar97}
M.~Parhizgar, \emph{On the cohomology ring of the free loop space of a wedge of
  spheres}, Mathematica Scandinavica \textbf{80} (1997), no.~2, 195--248.

\bibitem{cohololgy_Lprojective}
N.~Seeliger, \emph{Addendum to: {O}n the cohomology of the free loop space of a
  complex projective space}, Topology Appl. \textbf{156} (2009), no.~4, 847.
  \MR{2492969 (2010d:55012)}

\bibitem{CohomologyOmega(G/U)}
L.~Smith, \emph{Cohomology of {$\Omega(G/U)$}}, Proc. Amer. Math. Soc {\bf 19}
  (1968), 399--404.

\bibitem{Smith81}
L.~Smith, \emph{On the characteristic zero cohomology of the free loop space},
  American Journal of Mathematics \textbf{103} (1981), no.~5, 887--910.

\bibitem{ECstanly}
R.~P. Stanley, \emph{Enumerative combinatorics volume 2}, Cambridge University
  Press, 1999.

\bibitem{Tamanoi07}
H.~Tamanoi, \emph{Batalin-vilkovisky lie algebra structure on the loop homology
  of complex stiefel manifolds}, International Mathematics Research Notices
  \textbf{23} (2006), ID 97193.

\bibitem{Ziller1977}
W.~Ziller, \emph{The free loop space of globally symmetric spaces}, Inventiones
  mathematicae \textbf{41} (1977), 1--22.

\end{thebibliography}
			
\end{document}